\newtheorem{theorem}{Theorem}[section]
\newtheorem{proposition}[theorem]{Proposition}
\newtheorem{lemma}[theorem]{Lemma}
\newtheorem{corollary}[theorem]{Corollary}
\theoremstyle{definition}
\newtheorem{definition}[theorem]{Definition}
\theoremstyle{remark}
\newtheorem{remark}[theorem]{Remark}
\newtheorem{remarks}[theorem]{Remarks}
\newtheorem{ex}[theorem]{Example}
\numberwithin{equation}{section}
\newcommand{\CH}{{\rm CH}}
\newcommand{\red}{{\rm red}}
\newcommand{\End}{{\rm End}}
\newcommand{\Hom}{{\rm Hom}}
\newcommand{\Spec}{{\rm Spec\,}}
\newcommand{\Tr}{{\rm Tr}}
\newcommand{\0}{\emptyset}
\newcommand{\sC}{{\mathcal C}}
\newcommand{\sE}{{\mathcal E}}
\newcommand{\sF}{{\mathcal F}}
\newcommand{\sG}{{\mathcal G}}
\newcommand{\sH}{{\mathcal H}}
\newcommand{\sI}{{\mathcal I}}
\newcommand{\sJ}{{\mathcal J}}
\newcommand{\sK}{{\mathcal K}}
\newcommand{\sL}{{\mathcal L}}
\newcommand{\sN}{{\mathcal N}}
\newcommand{\sO}{{\mathcal O}}
\newcommand{\sV}{{\mathcal V}}
\newcommand{\sX}{{\mathcal X}}
\newcommand{\sY}{{\mathcal Y}}
\newcommand{\A}{{\mathbb A}}
\newcommand{\G}{{\mathbb G}}
\renewcommand{\L}{{\mathbb L}}
\renewcommand{\P}{{\mathbb P}}
\newcommand{\V}{{\mathbb V}}
\newcommand{\Z}{{\mathbb Z}}
\newcommand{\BM}{{\operatorname{B.M.}}}
\renewcommand{\det}{\operatorname{det}}
\newcommand{\id}{{\operatorname{\rm Id}}}
\newcommand{\Zar}{{\text{\rm Zar}}}
\newcommand{\Sch}{{\operatorname{\mathbf{Sch}}}}
\newcommand{\op}{{\text{\rm op}}}
\newcommand{\<}{\langle}
\renewcommand{\>}{\rangle}
\newcommand{\Div}{{\operatorname{div}}} 
\newcommand{\Coh}{\operatorname{Coh}}
\newcommand{\QCoh}{\operatorname{QCoh}}
\newcommand{\del}{\partial}
\newcommand{\Sm}{{\mathbf{Sm}}}
\newcommand{\Proj}{{\operatorname{Proj}}}
\newcommand{\Sym}{{\operatorname{Sym}}}
\newcommand{\Ext}{{\operatorname{Ext}}}
\newcommand{\Tor}{{\operatorname{\rm Tor}}}
\newcommand{\bTr}{\mathbf{Tr}}
\newcommand{\mfC}{\mathfrak{C}}
\newcommand{\MGL}{{\operatorname{MGL}}}
\newcommand{\KGL}{{\operatorname{KGL}}}
\newcommand{\MSL}{{\operatorname{MSL}}}
\newcommand{\GW}{{\operatorname{GW}}} 
\newcommand{\SH}{{\operatorname{SH}}} 
\newcommand{\Th}{{\operatorname{Th}}} 
\newcommand{\Cst}{\mathfrak{C}^{st}} 
\newcommand{\sHom}{\mathcal{H}om}
\newcommand{\Aut}{{\operatorname{Aut}}}
\newcommand{\SL}{\operatorname{SL}}
\newcommand{\BSL}{\operatorname{BSL}}
\newcommand{\Deg}{\text{deg}}
\newcommand{\ev}{\text{\it ev}}
\newcommand{\vir}{\text{\it vir}}
\newcommand{\perf}{\text{\it perf}}
\newcommand{\ind}[1]{}
\newcommand{\inp}[1]{}
\newcommand{\mS}{\mathbb{S}}
\begin{document}
\setcounter{tocdepth}{1}

\title{The intrinsic stable normal cone}
\author{Marc Levine}
\email{marc.levine@uni-due.de}
\address{Fakult\"at Mathematik\\
Universit\"at Duisburg-Essen\\
Thea-Leymann-Stra{\ss}e 9\\
45127 Essen\\
Germany}
\subjclass[2020]{14N35 (primary), 14F42, 55P42 (secondary)}
\keywords{Enumerative geometry, virtual fundamental class, motivic homotopy theory}
\thanks{The author thanks the DFG for support through the grant LE 2259/8-1 and the ERC  for support through the project QUADAG. This paper is part of a project that has received funding from the European Research Council (ERC) under the European Union's Horizon 2020 research and innovation programme (grant agreement No. 832833).}

\begin{abstract} We construct an analog of the intrinsic normal cone of Behrend-Fantechi  in the setting of motivic stable homotopy theory. A perfect obstruction theory gives rise to a virtual fundamental class in $\sE$-cohomology for any motivic cohomology theory $\sE$; this includes the oriented Chow groups of Barge-Morel  and Fasel. 
\end{abstract}

\maketitle

\tableofcontents

\section{Introduction}\label{sec:Intro}
The various versions of modern enumerative geometry, including Gromov-Witten theory and Donaldson-Thomas theory, are based on two important constructions due to Behrend and Fantechi \cite{BF}. The first is the construction of the intrinsic normal cone $\mfC_Z$ of a Deligne-Mumford stack $Z$ over some base-scheme $B$. The second, based on the first, is the virtual fundamental class $[Z,[\phi]]^\vir\in \CH_r(Z)$ associated to a perfect obstruction theory $[\phi]:E_\bullet\to \L_{Z/B}$ on $Z$, with $r$ the virtual rank of $E_\bullet$.  In case $r=0$ and $Z$ is proper over a field $k$, one has the numerical invariant $\Deg_k[Z,[\phi]]^\vir$; more generally, one can cut down $[Z,[\phi]]^\vir$ to dimension zero by taking so-called descendants, and then taking the degree of the resulting 0-cycle. 

A perfect obstruction theory on $Z$ is given by a map $[\phi]:E_\bullet\to \L_{Z/B}$ in $D^\perf(Z)$ such that $E_\bullet$ is locally represented on $Z$ by a two-term complex $F_1\to F_0$ in degrees $0, 1$ (we use homological notation) and such that the map $[\phi]$ induces an isomorphism on the sheaf $h_0$ and a surjection on $h_1$.

In case $[\phi]$ admits a global resolution $(F_1\to F_0)\to\L_{Z/B}$,  the virtual fundamental class is defined by embedding $\mfC_Z$ in the quotient stack $[F^1/F^0]$ ($F^i:=\V(F_i)$), pulling back $\mfC_Z$ via the quotient map $F^1\to [F^1/F^0]$, which gives  the subcone $\mfC(F_\bullet)\subset F^1$,  and then intersecting with the zero-section:
\[
[Z,[\phi]]^\vir:=0_{F^1}^!([\mfC(F_\bullet)]).
\]
Here $F^i\to Z$ is the vector bundle dual to $F_i$ and $[\mfC(F_\bullet)]$  is the fundamental class associated to the closed subscheme $\mfC(F_\bullet)$ of $F^1$. If one wishes to extend this type of construction to more general cohomology theories, there may be a problem in even defining the fundamental class $[\mfC(F_\bullet)]$. For instance, in algebraic cobordism $\Omega_*$, fundamental classes of arbitrary schemes do not exist \cite[\S3]{LevineFund}.  

The main point of this paper is to reinterpret the constructions of the intrinsic normal cone, its fundamental class, and the virtual fundamental class associated to a perfect obstruction theory in the setting of motivic homotopy theory. Rather than taking a DM or Artin stack as our basic object,  we work in the $G$-equivariant setting, following the current state of the art  in motivic stable homotopy theory, for which unfortunately a suitable theory for stacks is not yet available.  We will assume that $G$ is {\em tame} in the sense of \cite{Hoyois6}; this includes the case of  a split torus,  a finite \'etale group scheme of order prime to all residue characteristics, or a reductive group scheme in characteristic zero. For the full theory, we will also need to assume that the base-scheme $B$ is affine and the $G$-scheme $Z$ carrying the perfect obstruction theory is $G$-quasi-projective over $B$.

In spite of these restrictions, we gain a great deal of generality. We construct an ``intrinsic stable normal cone'' $\Cst_Z$ for each $G$-quasi-projective $B$-scheme $Z$, with $\Cst_Z$ defined as an object in the equivariant motivic stable homotopy category $\SH^G(B)$ (see Theorem~\ref{thm:IntStableNormalCone} and Definition~\ref{Def:IntStableNormalCone}). $\Cst_Z$ carries a fundamental class $[\Cst_Z]$ in co-homotopy $\mS_B^{0,0}(\Cst_Z)$ (Definition~\ref{def:FundClass}). Moreover, for a perfect obstruction theory $[\phi]:E_\bullet\to \L_{Z/B}$ on $Z$, we use $[\Cst_Z]$ to construct a virtual fundamental class in twisted Borel-Moore homology (see Definition~\ref{def:BMHomology})
\[
[Z, [\phi]]^\vir\in \mS_B^\BM(Z, \V(E_\bullet)).
\]
 As we are not relying on a theory in the setting of stacks, the construction (Definition~\ref{def:VirtFundClass}) relies on a number of choices; Proposition~\ref{prop:VirtFundClass} provides the crucial independence of these choices.

If $\sE\in \SH^G(B)$ is a motivic ring spectrum (i.e., a monoid object in $\SH^G(B)$) with unit map $\epsilon_\sE:\mS_B\to\sE$, applying $\epsilon_\sE$ to $[\Cst_Z]$ or $[Z, [\phi]]^\vir$ gives us elements
\begin{align*}
&[\Cst_Z]_\sE\in \sE^{0,0}(\Cst_Z);\\
&[Z, [\phi]]^\vir_\sE\in \sE^\BM(Z, \V(E_\bullet)).
\end{align*}

For simplicity,  we consider the case $G=\{\id\}$.  If we take $\sE=H\Z$, the spectrum representing motivic cohomology, then, suitably interpreted, these classes reduce to the classes defined by Behrend-Fantechi. More generally, if $\sE$ is orientable, then 
\[
\sE^\BM(Z, \V(E_\bullet))\cong \sE^\BM_{2r,r}(Z)
\]
with $r$ the virtual rank of $E_\bullet$.  We can thus identify $\sE^{0,0}(\Cst_Z)$ and $\sE^\BM(Z, \V(E_\bullet))$ as Borel-Moore $\sE$-homology, giving classes
\begin{align*}
&[\Cst_Z]_\sE\in \sE^\BM_{2d_M,d_M}(\mfC_i);\\
&[Z, [\phi]]^\vir_\sE\in \sE^\BM_{2r,r}(Z).
\end{align*}
Here $\mfC_i$ is the normal cone of $Z$ for a given closed immersion $i:Z\to M$ with $M$ smooth over $B$, $d_M$ is the dimension of $M$ over $B$ (which is the same as the dimension of $\mfC_i$ over $B$) and $r$ is the virtual rank of $E_\bullet$. Besides motivic cohomology, this includes such oriented theories such as (homotopy invariant) algebraic $K$-theory or algebraic cobordism.

If we work with theories $\sE$ that are not oriented, the identification of the group carrying the virtual fundamental class becomes more complicated. However, there is an interesting class of theories, the $\SL$-oriented theories, which admit a Thom isomorphism for bundles with a trivial determinant. One such theory is   cohomology in the sheaf of Milnor-Witt $K$-groups (see \cite{MorelA1}). The part of this theory corresponding to the Chow groups gives the Barge-Morel theory of {\em oriented} Chow groups (or Chow-Witt groups)
\[
\widetilde{\CH}^n(X):=H^n(X, \sK^{MW}_n)
\]
a formula reminiscent of Bloch's formula relating the classical Chow groups with Milnor $K$-theory. There are also twisted versions of the oriented Chow groups
\[
\widetilde{\CH}^n(X; L):=H^n(X, \sK^{MW}_n(L))
\]
for a line bundle $L$ on $X$.  These formulas for the oriented Chow groups are  only valid for smooth $X$, but one has a straightforward extension to the general case using Borel-Moore homology. The general theory gives us classes
\begin{align*}
&[\Cst_Z]_{\sK^{MW}_*}\in \widetilde{\CH}_{d_M}(\mfC_i; i^*\omega^{-1}_{M/B});\\
&[Z, [\phi]]^\vir_{\sK^{MW}_*}\in \widetilde{\CH}_r(Z; \det E_\bullet).
\end{align*}

In this setting  the push-forward maps on the oriented Chow groups are restricted to {\em oriented} proper morphisms. This still allows one to achieve a refinement of the usual Gromov-Witten type invariants in case the given perfect deformation theory $E_\bullet$ not only has virtual rank zero, but also has trivial virtual determinant bundle  modulo squares. In this case, we have 
\[
\Deg([Z, [\phi]]^\vir_{\sK^{MW}_*})\in \GW(k)
\]
where $\GW(k)$ is the Grothendieck-Witt group of the base-field $k$; applying the rank homomorphism $\GW(k)\to \Z$ recovers the classical degree. We hope that this approach will prove useful, for example, in studying the enumerative geometry of real varieties, where one takes the signature rather than the rank to obtain the relevant invariant.

Our approach is essentially formal:  our construction uses three ingredients beyond some elementary geometry of normal cones, namely:
\begin{enumerate}
\item The existence of Grothendieck's six operations for the equivariant motivic stable homotopy category $\SH^G(-):(\Sch^{G}/B)^\op\to \bTr$. Here $\bTr$ is the 2-category of triangulated categories. In particular, for each $G$-vector bundle $V\to X$, we have the autoequivalence $\Sigma^V:\SH^G(X)\to \SH^G(X)$. 
\item For $X\in \Sch^G/B$, we have the path groupoid $\sV^G(X)$ of the $G$-equivariant $K$-theory space of $X$. We need the existence of a natural transformation $\Sigma^{-}:\sV^G(-)\to \Aut(\SH^G(-))$ extending the map  $V\mapsto \Sigma^V$,   such that the exceptional push-forward and pull-back for a smooth morphism $f:X\to Y$ is given by $f^!=\Sigma^{T_{X/Y}}\circ f^*$, $f_!=f_\#\circ \Sigma^{-T_{X/Y}}$, where $f_\#$ is the left adjoint to $f^*$. 
\item $\A^1$-homotopy invariance: for $p:V\to Z$ an affine space bundle, co-unit of adjunction $p_!p^!\to \id_{\SH^G(Z)}$ is an isomorphism.
\end{enumerate}
Presumably other contravariant functors $\Sch^G/B\to \bTr$ have these three properties. 

A construction of the fundamental class of the normal cone $\mfC_{Z\subset M}$ in algebraic cobordism was communicated to us by Parker Lowrey some years ago. Our construction of the fundamental class may be viewed as a generalization of this method, see Example~\ref{ex:ClassicalExamples} for further details. F. D\'eglise, F. Jin and A. Khan \cite{DJK}
 have generalized aspects of the work of Lowrey-Sch\"urg \cite{LowreySchuerg},  constructing fundamental classes of quasi-smooth derived schemes in a motivic stable homotopy category of derived schemes; we expect there is a suitable dictionary translating between some of their constructions and some of the ones given here. For instance, A. Khan \cite{KhanDerVir} has constructed a virtual fundamental class associated to a quasi-smooth morphism $f:\sX\to \sY$ of derived higher stacks. Due to his use of (higher) stacks, Khan's construction takes place in the framework of the motivic stable homotopy category for the \'etale topology, which places the virtual fundamental class in a Borel-Moore homology for a theory that satisfies \'etale descent and thus is not applicable for getting invariants in more general theories, such as those closely related to the Grothendieck-Witt ring. 
 
It seems one can restrict Khan's construction to the setting of derived schemes, for which one does have a good theory comparable to that of $\SH(B)$ for $B$ a scheme, in fact a central result  of Khan \cite{KhanDerived} states that  for $X$ the underlying classical scheme of a (connective) derived scheme $\sX$, the restriction map $\SH(\sX)\to \SH(X)$ is an equivalence. A quasi-smooth map of derived schemes $f:\sX\to \sY$ does give rise to a (relative) perfect obstruction theory on the underlying classical scheme of $\sX$, and we expect that Khan's classes agree with the ones constructed here. However, Sch\"urg \cite{Schurg} has found obstructions for a given perfect obstruction theory to arise this way, so the approach using derived schemes may not always be applicable. These issues of relating the derived theory with the one presented here is being investigated by A. D'Angelo \cite{DAngelo}.
 
In \S\ref{sec:Background} we recall the necessary background from motivic homotopy theory. We 
construct the intrinsic stable normal cone  in \S\ref{sec:NormalThom} and its   fundamental class in \S\ref{sec:FundClass}. The  construction of the virtual fundamental class in a special case (for a  {\em reduced, normalized representative} of a perfect obstruction theory)  is given in \S\ref{sec:RedNormVirClass}, where the formula is completely analogous to the one of Behrend-Fantechi. 

In \S\ref{sec:ObstThy}, we show how a Jouanolou cover $p_Z:\tilde{Z}\to Z$ and a perfect obstruction theory $[\phi]:E_\bullet\to \L_{Z/B}$ give rise to an {\em induced} perfect obstruction theory $p_Z^![\phi]:p_Z^!E_\bullet\to \L_{\tilde{Z}/B}$, which admits a reduced, normalized representative. 

 The general case is handled in 
\S\ref{sec:VFCGeneral}.  Roughly speaking,  one uses Jouanolou's trick and $\A^1$-homotopy invariance properties and the results of \S\ref{sec:ObstThy} to reduce the general case to the case  handled in \S\ref{sec:RedNormVirClass}.  The main point is to show that the resulting class is independent of the various choices made along the way.

 We compare our constructions with those of Behrend-Fantechi and discuss variants in \S\ref{sec:Comp}. We conclude with some explicit computations in \S\ref{sec:LocalDeg}, looking at critical loci and relating the virtual fundamental class of a local complete intersection to the $\A^1$ local degree defined in \cite{KW}.

I would like to thank the referee for a series of insightful comments, which were used to improve an earlier version of this paper. Further improvements were made possible by detailed comments and suggestions from Sabrina Pauli, for which I am very grateful.

\section{Background on motivic homotopy theory}\label{sec:Background}
We begin by recalling some of the aspects of the six operations for the motivic stable homotopy category. We refer the reader to \cite{Ayoub, CD, JardineMotSym, MorelVoev, VoevICM} for details on the non-equivariant case and \cite{Hoyois6} for the extension to the equivariant setting. 

Fix a noetherian affine scheme $U$ with a flat, finitely presented linearly reductive group scheme $G_0$ over $U$  (see \cite[Definition 2.14]{Hoyois6}).  

We fix a quasi-projective $U$-scheme $B\to U$ (with trivial $G_0$-action) as base-scheme  and let $G=G_0\times_UB$. A $G$-equivariant morphism $q:Y\to X$ of $G$-schemes over $B$ is called {\em $G$-quasi-projective} if there is a $G$-vector bundle $V\to X$ and a $G$-equivariant locally closed immersion $i:Y\to \P(V)$ of $X$-schemes.  We let   $\Sch^G/B$ be the full subcategory of $G$-schemes over $B$ with objects the $G$-quasi-projective  $B$-schemes and let $\Sm^G/B$ be the full subcategory of smooth $B$-schemes in $\Sch^G/B$. We will usually denote the structure morphism for $Z\in \Sch^G/B$ by $\pi_Z:Z\to B$.

For $X\in \Sch^G/B$, we have the category $\QCoh_X^G$ of quasi-coherent $\sO_X$-modules with $G$-action and the full subcategory $\Coh^G_X$ of coherent sheaves. We call $\sF$ in $\Coh_X^G$ locally free if $\sF$ is locally free as an $\sO_X$-module. We let $D^b_G(X)$ denote the bounded derived category of coherent $G$-sheaves, $D_G(X)$ the unbounded derived category of quasi-coherent $G$-sheaves and $D^\perf_G(X)$ the full subcategory of  $D_G(X)$ of complexes isomorphic in $D_G(X)$ to a bounded complex of locally free coherent sheaves. Such a complex is called a {\em perfect} complex. 

 We will use homological notation for complexes: for a homological complex $C_\bullet$, $\tau_{\le n}C_\bullet$ is the quotient complex which is $C_m$ in degree $m<n$, $0$ in degree $m>n$ and $C_n/\del(C_{n+1})$ in degree $n$. 

We will assume that $U$ has the {\em $G_0$-resolution property}, namely, that each $\sF\in \Coh^{G_0}_U$ admits a surjection $\sE\to \sF$ from a locally free   $\sE$ in $\Coh^{G_0}_U$ (see \cite[Definition 2.7]{Hoyois6}). This implies that the group scheme $G$ over $B$ is {\em tame} in the sense of \cite[Definition 2.26]{Hoyois6}. 
 Examples  of linearly reductive $G_0$ such that $U$ has the $G_0$-resolution property include
\begin{itemize}
\item   $G_0$ is finite locally free of order invertible on $U$. 
\item $G_0$ is of multiplicative type and is isotrivial.
\item $U$ has characteristic zero and $G_0$ is reductive with isotrivial radical and coradical (e.g., $G_0$ is semisimple).
\end{itemize}
See \cite[Examples 2.8, 2.16, 2.27]{Hoyois6}.

Hoyois shows \cite[Lemma 2.11]{Hoyois6} that each $Z\in \Sch^G/B$ has the  $G$-resolution property. In addition, if $Z\in \Sch^G/B$ is affine, then  a locally free coherent $G$-sheaf $\sF$  on $Z$ is projective in $\QCoh_X^G$ \cite[Lemma 2.17]{Hoyois6}. This also implies that a complex $E_\bullet$ in $D_G(Z)$ that is locally (on $Z_\Zar$)  a perfect complex is in fact a perfect complex on $Z$. Similarly, if $E_\bullet\in D_G(X)$ is locally isomorphic to a complex of locally free coherent sheaves that is 0 in degrees outside a given interval $[a,b]$, then $E_\bullet$ is isomorphic to a complex of  coherent locally free $G$-sheaves on $X$ that is 0 in degrees outside $[a,b]$. Such a complex is called a {\em perfect complex supported in $[a,b]$}.

For $E\in \Coh_X^G$ locally free, we have the associated vector bundle $p:\V(E)\to X$, with
$\V(E):=\Spec_{\sO_X}\Sym^* E$.
The $G$-action on $E$  gives $\V(E)$ a $G$-action, with  $p:\V(E)\to X$ a $G$-equivariant morphism.

We will  often drop the ``$G$'' in our notations, speaking of $B$-morphisms for  $G$-equivariant $B$-morphisms, vector bundles $V\to X$ for  $G$-vector bundles, etc.

Let $\bTr$ be the 2-category of  triangulated categories. Following \cite[\S6, Theorem 6.18]{Hoyois6}, we have the motivic stable homotopy category 
\[
\SH^G(-):\Sch^G/B^\op\to \bTr; 
\]
for $f:Y\to X$ in $\Sch^G/B$, we have the exact functor $f^*:\SH^G(X)\to \SH^G(Y)$ with right adjoint $f_*:\SH^G(Y)\to \SH^G(X)$ and the exceptional pull-back $f^!:\SH^G(X)\to \SH^G(Y)$ with left adjoint $f_!:\SH^G(X)\to \SH^G(Y)$. If $f$ is a smooth morphism, $f^*$ admits the left adjoint $f_\#$.  $\SH^G(X)$ is a closed symmetric monoidal triangulated category with product denoted $\wedge_X$, unit $1_X$ (we often write $\mS_B$ for $1_B$) and internal Hom $\sHom_X(-,-)$; $f^*$ is a symmetric monoidal functor and $f_*$ and $f_!$ satisfy projection formulas, that is, for $f:Y\to X$, $f_*$ and $f_!$ are $\SH^G(X)$-module maps and the same holds for $f_\#$ if $f$ is smooth. There is a natural transformation $\eta^f_{!*}:f_!\to f_*$ which is an isomorphism if $f$ is proper. See also the earlier treatments  \cite{Ayoub} and \cite{CD} for the non-equivariant case. 

For the pair of adjoint functors $a_!\dashv a^!$, we let $e_a:a_!a^!\to \id$ denote the co-unit.
For the pair of adjoint functor $a^*\dashv a_*$, we let $u_a:\id\to a_*a^*$ denote the unit. 
We will use analogous notation for other adjoint pairs, leaving the context to make the meaning clear.

For $p:V\to X$ a vector bundle with zero-section $s:X\to V$, we have the $V$-suspension and $-V$-suspension operators $\Sigma^{\pm V}:\SH^G(X)\to \SH^G(X)$ defined as
\[
\Sigma^V:=p_\#\circ s_*=p_\#\circ s_!,\ \Sigma^{-V}:=s^!\circ p^*,
\]
with $\Sigma^V$ the left adjoint to $\Sigma^{-V}$. These endofunctors are in fact inverse equivalences \cite[Proposition 6.5]{Hoyois6}.

For a locally free sheaf $E\in\Coh_X^G$, we thus have  the autoequivalence $\Sigma^{\V(E)}$ of $\SH^G(X)$. Ayoub \cite[Th\'eor\`em 1.5.18]{Ayoub} and Riou \cite[Proposition 4.1.1]{RiouRR} have shown that for $G=\{\id\}$, the association $E\mapsto \Sigma^{\V(E)}$ extends to a functor 
\[
\Sigma^{-}:\sV(X)\to \Aut(\SH(X)),
\]
Here $\sV(X)$ is the path groupoid of the $K$-theory space of $X$  and $\Aut(\SH(X))$ is the category with objects the auto-equivalences of $\SH(X)$ and morphisms the natural isomorphisms.   

$\Sigma^{-}$ natural with respect to   $f^*$, $f^!$, $f_*$ and $f_!$ for morphisms $f:Y\to X$:
\[
\Sigma^{f^*\V(E)}\circ f^?\cong f^?\circ \Sigma^{\V(E)},\ f_?\circ \Sigma^{f^*\V(E)} \cong \Sigma^{\V(E)}\circ f_?,
\]
for $?=*, !$. This extends without problem to the equivariant case to give a functor $\Sigma^{-}:\sV^G(X)\to \Aut(\SH^G(X))$, with $\sV^G(X)$ the path groupoid of the $G$-equivariant $K$-theory space of $X$, having the properties listed above.

We write $\Sigma^{\V(E_\bullet)}$ for the image of a perfect complex $E_\bullet$ under this functor.\footnote{Many authors, for example \cite{Hoyois6}, use the notation $\Sigma^{E_\bullet}$ for our notation $\Sigma^{\V(E_\bullet)}$. For example, for $X$ smooth over $S$ and $E=\Omega_{X/S}$, we have $\V(E)=T_{X/S}$, the relative tangent bundle, and the operator denoted $\Sigma^{\Omega_{X/S}}$ in \cite{Hoyois6} will be written here as $\Sigma^{T_{X/S}}$}  For each distinguished triangle $E^1_\bullet\to E_\bullet\to E^2_\bullet\to$ in $D_G^\perf(X)$ there is an isomorphism
\[
 \Sigma^{\V(E_\bullet)}\cong \Sigma^{\V(E^{1}_\bullet)}\circ \Sigma^{\V(E^{2}_\bullet)},
\]
natural with respect to isomorphisms of distinguished triangles, 
and for each $E_\bullet$ an isomorphism $\Sigma^{\V(E_\bullet[1])}\cong (\Sigma^{\V(E_\bullet)})^{-1}$.  Thus, if $E_\bullet=(E_n\to\ldots\to E_m)$, supported in $[m,n]$, then $\Sigma^{\V(E_\bullet)}$ is canonically isomorphic to $\Sigma^{(-1)^n\V(E_n)}\circ\ldots\circ \Sigma^{(-1)^m\V(E_m)}$. 

 For $f:Y\to X$  smooth, there are canonical isomorphisms \cite[Theorem 6.9]{Hoyois6}
\[
f_!\cong f_\#\circ \Sigma^{-T_f}, \ f^!\cong \Sigma^{T_f}\circ f^*,
\]
with $T_f\to Y$ the relative tangent bundle, giving the canonical isomorphism 
\[
f_!f^!\cong f_\#f^*.
\]
If $f:V\to X$ is an affine space bundle over $X$, then the $\A^1$-homotopy property shows that the co-unit of the adjunction $f_\#\dashv f^*$, $e_f:f_\#f^*\to \id$, is a natural isomorphism.

Besides the equivariant stable motivic homotopy category, Hoyois has defined an equivariant unstable motivic category $\sH^G_\bullet(X)$ for $X\in\Sch^G/B$ \cite[\S5]{Hoyois6}, generalizing the constructions of Morel-Voevodsky \cite{MorelVoev} in the non-equivariant case. There is an infinite $T$-suspension functor $\Sigma^\infty_T:\sH^G_\bullet(X)\to \SH^G(X)$; we often simply write $\sX$ for $\Sigma^\infty_T\sX$ when the context makes the meaning clear. $\sH_\bullet^G(X)$ is a symmetric monoidal category with unit $S^0_X:=X_+$, and the unit $1_X\in \SH^G(X)$ is $\Sigma^\infty_T(S^0_X)$.

The functors $f^*$, $f_*$ are $T$-stabilizations of functors $f^*:\sH_\bullet(X)\to\sH_\bullet(Y)$, $f_*:\sH_\bullet(Y)\to \sH_\bullet(X)$, with $f^*$ left adjoint to $f_*$. If $f:Y\to X$ is smooth,  $f_\#$ is the $T$-stabilization of $f_\#:\sH_\bullet(Y)\to \sH_\bullet(X)$, left adjoint to $f^*$. Similarly, if $i:Y\to X$ is a closed immersion,   the maps $i_*=i_!:\SH^G(Y)\to \SH^G(X)$,   are the $T$-stabilizations of the unstable $i_*$.

To give the reader some intution about the suspension functors, we mention that for $V\to X$ a vector bundle with zero section  $s$, the suspension $\Sigma^V(1_X)$ is the stabilition of the {\em Thom space} $\Th_X(V):=p_\#s_*(1_V)\in\sH^G_\bullet(X)$,   and that $\Th_X(V)$ is canonically isomorphic to the cofiber of $V\setminus s(X)\to V$ in $\sH^G_\bullet(X)$: $\Th_X(V)\cong V/(V\setminus s(X))$. We will not be using these facts in the sequel.   

There are {\em exchange morphisms} associated to a cartesian diagram
\[
\xymatrix{
Z\ar[r]^-q\ar@{}@<-12pt>[r]_\Delta\ar[d]_g&Y\ar[d]^f\\
W\ar[r]_-p&X
}
\]
as follows:\\\\
1. We have $Ex(\Delta^*_*):p^*f_*\to g_*q^*$ defined 
as the composition
\[
p^*f_*\xrightarrow{u_g}g_*g^*p^*f_*=g_*q^*f^*f_*\xrightarrow{e_f}g_*q^*
\]
$Ex(\Delta^*_*)$ is an isomorphism if $p$ is smooth or if $f$ is proper.\\\\
2. Suppose that $p$ is smooth. The isomorphism $Ex(\Delta^*_\#):q_\#g^*\to f^*p_\#$ is defined as the composition
\[
q_\#g^*\xrightarrow{u_p}q_\#g^*p^*p_\#=q_\#q^*f^*p_\#\xrightarrow{e_q}f^*p_\#.
\]
3. Suppose $p$ is smooth. We have $Ex(\Delta_{\#*}):p_\#g_*\to f_*q_\#$ defined as the composition
\[
p_\#g_*\xrightarrow{u_f}f_*f^*p_\#g_*\xrightarrow{Ex(\Delta^*_\#)^{-1}}f_*q_\#g^*g_*
\xrightarrow{e_g}f_*q_\#.
\]
4. Suppose $p$ is smooth. We have the isomorphism $Ex(\Delta^{*!}):g^*p^!\to q^!f^*$ defined as the composition
\[
g^*p^!\cong g^*\Sigma^{T_{W/X}}p^*\cong \Sigma^{T_{Z/Y}}g^*p^*\cong  \Sigma^{T_{Z/Y}}q^*f^*\cong q^!f^*.
\]
5. Suppose $p$ is smooth. We have $Ex(\Delta_{!*}):p_!g_*\to f_*q_!$ defined  as the composition
\[
p_!g_*\cong p_\#\Sigma^{-T_{W/X}}g_*\cong p_\#g_*\Sigma^{-T_{Z/Y}}\xrightarrow{Ex(\Delta_{\#*})}f_*q_\#\Sigma^{-T_{Z/Y}}\cong f_*q_!
\]
6. For arbitrary $p$, we have the base-change  isomorphism $Ex(\Delta^*_!):p^* f_!\to g_!q^*$ (see \cite[Theorem 6.12]{Hoyois6}) satisfying $\eta^g_{!*}\circ Ex(\Delta^*_!)=Ex(\Delta^*_*)\circ \eta^f_{!*}$. If $f$ is smooth, combining $Ex(\Delta^*_!)$ with the naturality of $\Sigma^-$ gives the base-change isomorphism $Ex(\Delta^*_\#):p^*f_\#\to g_\#q^*$.

Suppose we have a closed immersion $i:Z\to X$ in $\Sch^G/B$, with open complement $j:U\to X$. This yields the {\em localization distinguished triangle} in $\SH^G(X)$
\begin{equation}\label{eqn:LocTria}
j_!j^!\xrightarrow{e_j} \id_{\SH^G(X)}\xrightarrow{u_i} i_*i^*\to j_!j^![1].
\end{equation}
Note that $j_!=j_\#$, $j^!=j^*$ and $i_*=i_!$.  

\begin{definition} Let $\pi_Z:Z\to B$ be in $\Sch^G/B$. The {\em Borel-Moore motive} of $Z$ over $B$  is the object $Z_\BM:=\pi_{Z!}(1_Z)$  in $\SH^G(B)$. For $v:=\V(E_\bullet)$, we have the twisted Borel-Moore motive $Z(v)_\BM:=
\pi_{Z!}(\Sigma^{v}(1_Z))$. If we need to denote the base-scheme $B$, we write these as $Z/B_\BM$ and $Z/B(v)_\BM$, respectively.
\end{definition}

 Let $p:Z\to W$ be a morphism in $\Sch^G/B$. For $p$ proper, we have the natural transformation ({\em proper pull-back})
 \begin{equation}\label{eqn:Projpush-forward}
 p^*:\pi_{W!}\to \pi_{Z!}\circ p^*
 \end{equation}
 defined as the composition  
\[
\pi_{W!} \xrightarrow{u_p}  \pi_{W!}p_*p^*\xrightarrow{(\eta^p_{!*})^{-1}}  \pi_{W!}p_!p^*\cong \pi_{Z!}\circ p^*.
\]
Applying $p^*$ to $\Sigma^{\V(E_\bullet)}(1_W)$ gives the morphism
\[
p^*:W(v)_\BM\to Z(p^*v)_\BM
\]
in $\SH^G(B)$. One checks easily that $(pq)^*=q^*p^*$ for composable proper morphisms $p$ and $q$. 

Let $f:Z\to W$ be a smooth morphism in $\Sch^G/B$.  We have the natural transformation  ({\em smooth push-forward})
\begin{equation}\label{eqn:Smoothpull-back}
f_*: \pi_{Z!}\Sigma^{T_f}f^*\to \pi_{W!}
\end{equation}
defined by  the composition
\[
\pi_{Z!} \Sigma^{T_f}f^*\cong \pi_{W!}f_! f^!\xrightarrow{e_f} \pi_{W!}.
\]
One checks that $f\mapsto f_*$ is functorial: For smooth morphisms $f:Z\to W$, $g:Y\to Z$, we have
\[
(fg)_*\circ\theta_{Y/Z/W}=f_*\circ [g_*\circ(\Sigma^{T_{fg}}\circ f^*)]
\]
where 
\[
\theta_{Y/Z/W}:(\pi_{Y!} \Sigma^{T_g}g^*)\circ(\Sigma^{T_f}\circ f^*)\to \pi_{Y!} \Sigma^{T_{fg}}\circ(fg)^*
\]
is the isomorphism induced by the exact sequence
\[
0\to T_g\to T_{fg}\to g^*T_f\to 0.
\]
Applying $f_*$ to $\Sigma^{\V(E_\bullet)}(1_W)$ gives the smooth push-forward map
\[
f_*:Z(f^*v+T_f)_\BM\to W(v)_\BM.
\]

Suppose a smooth morphism $f:Z\to W$ admits a section $s:W\to Z$.  We have the canonical isomorphism
\begin{equation}\label{eqn:a0}
f_!s_!\cong (fs)_!\tag{a}
\end{equation}
which induces the isomorphism on the adjoints
\begin{equation}\label{eqn:b0}
s^!f^!\cong (fs)^!\tag{b}
\end{equation}
Let $e_s:s_!s^!\to\id$, $e_f:f_!f^!\to \id$, $e_{fs}: (fs)_!(fs)^!\to \id$ be the co-units of adjunction. This gives us the commutative diagram
\[
\xymatrix{
 \id\ar@{}[r]|{\hbox{$\cong$}}\ar@{=}[d]&f_!s_!s^!f^!\ar[r]^-{f_!e_sf^!}\ar[d]_{(a)\circ(b)}^\wr&f_!f^!\ar[r]^{e_f}&\id\ar@{=}[d]\\
 \id \ar@{}[r]|{\hbox{$\cong$}}\ar@/_0.6cm/[rrr]_{\id}& (fs)_!(fs)^!\ar[rr]_{e_{fs}}&&\id,
 }
 \]
in other words, $f_!e_sf^!$  is a right inverse to $e_f$.\footnote{I am grateful to F. D\'eglise and D.C. Cisinski for this argument.}  Define the natural transformation
\begin{equation}\label{eqn:Section}
s_!:\pi_{W!}\to \pi_{Z!}\circ\Sigma^{T_f}\circ f^*
\end{equation}
as the composition
\[
\pi_{W!}\xrightarrow{\pi_{W!}\circ f_!e_sf^!}\pi_{W!}\circ f_!\circ f^!\cong
\pi_{W!}\circ f_!\circ \Sigma^{T_f}\circ f^*\cong\pi_{Z!}\circ\Sigma^{T_f}\circ f^*
\]
We call $s_!$ the {\em Gysin push-forward}.

\begin{lemma}\label{lem:SectionIsos} 1. Let $f:Z\to W$ be a smooth morphism in $\Sch^G/B$ with a section $s:W\to Z$. Then the composition
\[
\pi_{W!}\xrightarrow{s_!} \pi_{Z!}\circ\Sigma^{T_f}\circ f^*\xrightarrow{f_*}
\pi_{W!}
\]
is the identity.\\
2.  If $f:Z\to W$ is a vector bundle over $W$, then 
\[
f_*:\pi_{Z!}\circ\Sigma^{T_f}\circ f^*\to \pi_{W!}
\]
is an isomorphism, with inverse $s_!$.\\
3. If $f:Z\to W$ is an affine space bundle, then 
\[
f_*:\pi_{Z!}\circ\Sigma^{T_f}\circ f^*\to \pi_{W!}
\]
is an isomorphism.
\end{lemma}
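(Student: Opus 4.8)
The plan is to prove both statements by unwinding the definitions of $s_*$ (equation \eqref{eqn:Section}) and $f_*$ (equation \eqref{eqn:SmoothPullback}) and reducing everything to two facts already established above: first, the commutative diagram immediately preceding \eqref{eqn:Section}, which says that $f_!e_sf^!$, after precomposition with the canonical isomorphism $\id_{\SH^G(W)}\cong f_!s_!s^!f^!$ coming from $fs=\id_W$ together with \eqref{eqn:a0} and \eqref{eqn:b0}, is a right inverse of the co-unit $e_f\colon f_!f^!\to\id$; second, the $\A^1$-homotopy property, which makes the co-unit $f_\#f^*\to\id$ of the adjunction $f_\#\dashv f^*$ an isomorphism whenever $f$ is an affine space bundle.

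For part 1 I would first observe that both $s_*$ and $f_*$ factor through $\pi_{W!}f_!f^!\cong\pi_{Z!}\Sigma^{T_{Z/W}}f^*$: by construction $s_*$ ends with the composite of the isomorphisms $f^!\cong\Sigma^{T_{Z/W}}\circ f^*$ and $\pi_{W!}f_!\cong\pi_{Z!}$, whereas $f_*$ begins with the inverse of exactly that composite. Hence in $f_*\circ s_*$ these isomorphisms cancel and one is left with the natural transformation
\[
\pi_{W!}\xrightarrow{\ \pi_{W!}(f_!e_sf^!)\ }\pi_{W!}f_!f^!\xrightarrow{\ \pi_{W!}(e_f)\ }\pi_{W!},
\]
where at the source one inserts the canonical isomorphism $\id\cong f_!s_!s^!f^!$. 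Whiskering the commutative diagram before \eqref{eqn:Section} with $\pi_{W!}$ identifies this composite with $\id_{\pi_{W!}}$, which is the assertion of part 1.

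For part 2, part 1 already exhibits $s_*$ as a section (right inverse) of $f_*$, so it suffices to prove that $f_*$ is an isomorphism; then $s_*=f_*^{-1}\circ(f_*\circ s_*)=f_*^{-1}$ automatically, giving the full claim. In the definition of $f_*$ the initial arrow $\pi_{Z!}\Sigma^{T_{Z/W}}f^*\cong\pi_{W!}f_!f^!$ is an isomorphism, so it remains to see that $\pi_{W!}(e_f)\colon\pi_{W!}f_!f^!\to\pi_{W!}$ is an isomorphism. Using the canonical isomorphisms $f_!\cong f_\#\circ\Sigma^{-T_{Z/W}}$ and $f^!\cong\Sigma^{T_{Z/W}}\circ f^*$ one obtains $f_!f^!\cong f_\#f^*$, and under this identification $e_f$ corresponds to the co-unit $f_\#f^*\to\id$; since a vector bundle is in particular an affine space bundle, the $\A^1$-homotopy property makes that co-unit an isomorphism, whence $\pi_{W!}(e_f)$, and therefore $f_*$, is an isomorphism.

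The only step with content beyond formal diagram-chasing is the compatibility invoked in part 2: that the isomorphism $f_!f^!\cong f_\#f^*$ determined by $f^!\cong\Sigma^{T_{Z/W}}\circ f^*$ and $f_!\cong f_\#\circ\Sigma^{-T_{Z/W}}$ carries the co-unit $e_f\colon f_!f^!\to\id$ to the co-unit $f_\#f^*\to\id$. This is part of the purity/Thom-isomorphism formalism recalled in \S\ref{sec:Background} (essentially, the adjunction $f_!\dashv f^!$ is obtained from $f_\#\dashv f^*$ by conjugating with the adjoint equivalence $\Sigma^{-T_{Z/W}}\dashv\Sigma^{T_{Z/W}}$, and such a conjugation sends co-units to co-units), and I would take it as known; it is the one place where one must be careful, since the two co-units a priori belong to different adjunctions. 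Everything else — in particular verifying that the isomorphisms $\pi_{W!}f_!\cong\pi_{Z!}$ and $f^!\cong\Sigma^{T_{Z/W}}f^*$ appearing in the definitions of $s_*$ and of $f_*$ are literally mutually inverse where they meet — is routine bookkeeping.
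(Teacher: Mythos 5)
Your proposal is correct and follows essentially the same route as the paper: part 1 is exactly the identity $e_f\circ(f_!e_sf^!)=\id$ established in the diagram before \eqref{eqn:Section}, whiskered by $\pi_{W!}$, and part 2 reduces, via part 1, to the invertibility of $e_f$ under the identification $f_!f^!\cong f_\#f^*$ and $\A^1$-homotopy invariance. The only difference is that you flag explicitly the compatibility of the two co-units under this identification, a point the paper silently absorbs by writing $e_f:f_!f^!=f_\#f^*\to\id$.
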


\begin{proof} The fact that $e_f\circ(f_!e_sf^!)=\id$ implies (1).

For (2), it suffices by (1) to show that $f_*$ is an isomorphism. Since $Z\to W$ is a vector bundle, it follows by $\A^1$ homotopy invariance that $e_f:f_!f^!=f_\#f^*\to \id$ is an isomorphism; applying $\pi_{W!}\circ-$ yields (2).

As an affine space bundle is locally for the Zariski topology a vector bundle, (3) follows from (2) and Mayer-Vietoris.
\end{proof}
 
\begin{lemma}\label{lem:BaseChange}
Suppose we have a cartesian diagram in $\Sch^G/B$
\[
\xymatrix{
Z\ar[r]^-q\ar@{}@<-12pt>[r]_\Delta\ar[d]_g&Y\ar[d]^f\\
W\ar[r]_-p&X
}
\]
with $p$ proper and $f$ smooth. Then the diagram
\[
\xymatrixcolsep{3pt}
\xymatrix{
\pi_{Y!}\circ\Sigma^{T_f}\circ f^*\hskip20pt\ar[rrr]^-{q^*\circ(\Sigma^{T_f}\circ f^*)}\ar[d]_{f_*}
&&&\hskip20pt\pi_{Z!}\circ q^*\circ\Sigma^{T_f}\circ f^* \ar@{=}[r]&\pi_{Z!}\circ \Sigma^{T_g}\circ  g^*\circ p^*\ar[d]^{g_*\circ p^*}\\
\pi_{X!}\ar[rrrr]_{p^*}&&&&\pi_{W!}\circ p^*
}
\]
commutes. In other words,  proper pull-back commutes with smooth push-forward.
\end{lemma}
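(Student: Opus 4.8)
The plan is to unwind the definitions \eqref{eqn:ProjPushforward}, \eqref{eqn:SmoothPullback} of the four arrows in the square, push everything down to natural transformations of endofunctors of $\SH^G(X)$, and deduce the commutativity from two standard coherences of the six-functor formalism: the compatibility of the exceptional base-change isomorphism $p^*f_!\cong g_!q^*$ (which holds for every Cartesian square) with the units of the adjunctions $(-)^*\dashv(-)_*$ and with the co-units of $(-)_!\dashv(-)^!$.

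First I would record the geometry of the square. As $f$ is smooth and $p$ is proper, the base changes $g$ and $q$ are respectively smooth and proper; moreover $\Omega_{Z/W}\cong q^*\Omega_{Y/X}$ gives $T_{Z/W}\cong q^*T_{Y/X}$, and $fq=pg$ gives $q^*f^*=g^*p^*$. Together with the canonical isomorphism $q^*\Sigma^{T_{Y/X}}\cong\Sigma^{q^*T_{Y/X}}q^*$, these are exactly the identifications built into the equality $\pi_{Z!}q^*\Sigma^{T_{Y/X}}f^*=\pi_{Z!}\Sigma^{T_{Z/W}}g^*p^*$ in the statement. Since $p,q$ are proper, $\eta^p_{!*}$ and $\eta^q_{!*}$ are isomorphisms; using them to identify $p_!\cong p_*$ and $q_!\cong q_*$, the proper pullback of \eqref{eqn:ProjPushforward} for $p$ (resp. $q$) is $\pi_{X!}$ (resp. $\pi_{Y!}$) applied to the unit $u_p\colon\id\to p_*p^*$ (resp. $u_q$), while — under $f^!\cong\Sigma^{T_{Y/X}}f^*$, $\pi_{Y!}=\pi_{X!}f_!$ (resp. $g^!\cong\Sigma^{T_{Z/W}}g^*$, $\pi_{Z!}=\pi_{W!}g_!$) — the smooth pushforward of \eqref{eqn:SmoothPullback} for $f$ (resp. $g$) is $\pi_{X!}$ (resp. $\pi_{W!}$) applied to the co-unit $e_f\colon f_!f^!\to\id$ (resp. $e_g$).

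The core of the proof is to bring both composites in the square into a common normal form. By naturality of $u_p$ with respect to $e_f$, the composite $p^*\circ f_*$ equals $\pi_{X!}(p_*p^*e_f)$ precomposed with $u_p$ whiskered by $f_!f^!$. For $(g_*\circ p^*)\circ q^*$ I would first rewrite $q^*\Sigma^{T_{Y/X}}f^*=q^*f^!$ and observe that the occurrence of the base-change isomorphism $q^*f^!\cong g^!p^*$ cancels against its inverse; then, using the compatibility of the base-change isomorphism $b\colon g_!q^*\cong p^*f_!$ with the co-units together with $\pi_{Y!}q_!=\pi_{W!}g_!$ and $f_!q_!\cong(fq)_!=(pg)_!\cong p_!g_!$, this composite likewise takes the form $\pi_{X!}(p_*p^*e_f)$ precomposed with a map built from $u_q$ and $b$. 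The lemma then reduces to the identity of natural transformations
\[
[f_!\xrightarrow{f_!u_q}f_!q_*q^*\cong f_!q_!q^*\cong p_!g_!q^*\xrightarrow{p_!b}p_!p^*f_!\cong p_*p^*f_!]=[f_!\xrightarrow{u_pf_!}p_*p^*f_!],
\]
which is precisely the compatibility of $b$ with the units $u_p,u_q$. I would either quote this coherence (the non-equivariant case is in \cite{Ayoub,CD}, and the equivariant version follows formally, cf. \cite{Hoyois6}) or verify it by a short diagram chase from the construction of $b$ and the triangle identities.

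The main obstacle I expect is organizational rather than mathematical: one must thread the many canonical isomorphisms — $\pi_{Y!}=\pi_{X!}f_!$, $f_!q_!\cong(fq)_!\cong(pg)_!\cong p_!g_!$, $q_!\cong q_*$, $p_!\cong p_*$, $f^!\cong\Sigma^{T_{Y/X}}f^*$, $q^*\Sigma^{T_{Y/X}}\cong\Sigma^{T_{Z/W}}q^*$ and the base-change isomorphism — coherently through a large diagram, with particular care for the Thom-space twists $\Sigma^{T_{Y/X}}$, $\Sigma^{T_{Z/W}}$ and the identification $q^*T_{Y/X}\cong T_{Z/W}$. An alternative that replaces part of this bookkeeping by a case split is to factor the proper $G$-quasi-projective morphism $p$ as a closed immersion followed by a projective-bundle projection (which is both smooth and proper) and prove the lemma for these two kinds of $p$ separately — using the localization triangle \eqref{eqn:LocTria} in the first case and $\A^1$-homotopy invariance with the projective bundle formula in the second — though I do not expect this route to be shorter.
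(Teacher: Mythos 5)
Your strategy is sound and, at bottom, it is the same kind of argument as the paper's: unwind the definitions \eqref{eqn:ProjPushforward}, \eqref{eqn:SmoothPullback} into units and co-units, then reduce the commutativity to coherences of the six-functor formalism verified by mate/triangle-identity chases. The difference is the pivot you choose: the paper never invokes the general exceptional base-change isomorphism $p^*f_!\cong g_!q^*$; it works only with the exchange transformations it recalled in \S\ref{sec:Background} ($Ex(\Delta^*_\#)$, $Ex(\Delta_{\#*})$, $Ex(\Delta_{!*})$, $Ex(\Delta^{*!})$, all defined via $f_\#$ and purity for the smooth member of the square) and checks the assembled diagrams (a), (b1), (b2), (c) directly, the crucial square being reduced to the triangle identities for the $(\ )^*\dashv(\ )_*$ adjunctions. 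You instead take as input the base-change isomorphism $b\colon g_!q^*\cong p^*f_!$ together with two compatibilities: with the co-units $e_f,e_g$ (equivalently, that the purity exchange $q^*f^!\to g^!p^*$ is the mate of $b$) and with the units $u_p,u_q$ via $\eta^p_{!*},\eta^q_{!*}$ (equivalently, that for $p,q$ proper $b$ is the mate of the composition isomorphism $f_*q_*\cong p_*g_*$). That packaging is legitimate and arguably cleaner, but note these compatibilities are exactly where the content lies: they are not among the tools the paper sets up, so you must either cite them precisely (Ayoub/Cisinski--D\'eglise for the non-equivariant case, Hoyois for the equivariant extension) or prove them, and proving them amounts to essentially the diagram chases the paper carries out. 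One small inaccuracy: the isomorphism $q^*f^!\cong g^!p^*$ does not ``cancel against its inverse'' in the second composite --- after matching the purity twists it survives as the single occurrence of $Ex(\Delta^{*!})$, and it is precisely this map that your co-unit compatibility then trades for $b$ followed by $p^*e_f$; since that trade is the very next step of your plan, this is a bookkeeping slip rather than a gap. Your suggested alternative (factoring the proper $p$ as a closed immersion followed by a projective bundle and arguing by localization and homotopy invariance) would also work but, as you say, is unlikely to be shorter.
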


\begin{proof} In what follows we  simply write $\xrightarrow{\sim}$ for  isomorphisms that follow from functoriality, such as $\pi_{X!}f_!\cong\pi_{Y!}$ or that follow from the isomorphisms $\Sigma^{T_f}f^*\cong f^!$ or $\Sigma^{T_g}g^*\cong g^!$.

We fit a number of diagrams together.
\begin{equation}\label{eqn:a}\tag{a}
\xymatrixcolsep{30pt}
\xymatrix{
\pi_{Y!}\Sigma^{T_f}f^*\ar[r]^-{\pi_{Y!}\circ u_q}\ar[dd]_\wr&\pi_{Y!}q_*q^*\Sigma^{T_f}f^*\ar[d]^\wr\\
&\pi_{Y!}q_*q^*f^!\ar[d]^\wr\\
\pi_{X!}f_!f^!\ar[r]^{\pi_{X!}f_!u_qf^!}\ar[ddd]_{\pi_{X!}e_f}&\pi_{X!}f_!q_*q^*f^!\ar[d]^{\pi_{X!}Ex(\Delta_{!*})q^*f^!}\\
&\pi_{X!}p_*g_!q^*f^!\ar[d]^{\pi_{X!}p_*g_!Ex(\Delta^{*!})}_\wr\\
&\pi_{X!}p_*g_!g^!p^*\ar[d]^{\pi_{X!}p_*e_gp^*}\\
\pi_{X!}\ar[r]_{\pi_{X!}u_p}&\pi_{X!}p_*p^*
}
\end{equation}
\begin{equation}\label{eqn:b1}\tag{b1}
\xymatrixcolsep{60pt}
\xymatrix{
\pi_{Y!}q_*q^*\Sigma^{T_f}f^*\ar[d]^\wr&\pi_{Y!}q_!q^*\Sigma^{T_f}f^*\ar[l]^\sim_{\pi_{Y!}\eta^q_{!*}\Sigma^{T_f}f^*}
\ar[d]^\wr\\
\pi_{Y!}q_*q^*f^!\ar[d]^\wr&\pi_{Y!}q_!q^*f^! \ar[l]^\sim_{\pi_{Y!}\eta^q_{!*}f^!}\ar[d]^\wr\\
\pi_{X!}f_!q_*q^*f^!\ar[d]_{\pi_{X!}Ex(\Delta_{!*})}&\pi_{X!}p_!g_!q^*f^!\ar[ld]_\sim^{\pi_{X!}\eta^p_{!*}g_!q^*f^!}_\sim\\
\pi_{X!}p_*g_!q^*f^!&
}
\end{equation}
\begin{equation}\label{eqn:b2}\tag{b2}
\xymatrixcolsep{60pt}
\xymatrix{
\pi_{X!}p_*g_!q^*f^!\ar[d]^\wr_{\pi_{X!}p_*g_!Ex(\Delta^{*!})}&\pi_{X!}p_!g_!q^*f^!\ar[l]^\sim_{\pi_{X!}\eta^p_{!*}g_!q^*f^!}\ar[d]_\wr^{\pi_{X!}p_!g_!Ex(\Delta^{*!})}\\
\pi_{X!}p_*g_!g^!p^*\ar[d]_{\pi_{X!}p_*e_gp^*}&\pi_{X!}p_!g_!g^!p^*\ar[l]^\sim_{\pi_{X!}\eta^p_{!*}g_!g^!p^*}\ar[d]^{\pi_{X!}p_!e_gp^*}\\
\pi_{X!}p_*p^*&\pi_{X!}p_!p^*\ar[l]_\sim^{{\pi_{X!}\eta^p_{!*}p^*}}
}
\end{equation}

\begin{equation}\label{eqn:c}\tag{c}
\xymatrixcolsep{60pt}
\xymatrix{
\pi_{Y!}q_!q^*\Sigma^{T_f}f^*\ar[r]^\sim\ar[d]^\wr&\pi_{Z!}\Sigma^{T_g}g^*p^*\ar[d]^\wr\\
\pi_{Y!}q_!q^*f^!\ar[d]^\wr&\pi_{Z!}g^!p^*\ar[d]^\wr\\
\pi_{X!}p_!g_!q^*f^!\ar[r]_\sim^{\pi_{W!}g_!Ex(\Delta^{*!})}\ar[d]^\sim_{\pi_{W!}g_!Ex(\Delta^{*!})}
&\pi_{W!}g_!g^!p^*\ar[dl]^{\sim\circ\id_{g_!g^!p^*}}\ar[dd]^{\pi_{W!}e_gp^*}\\
\pi_{X!}p_!g_!g^!p^*\ar[d]_{\pi_{X!}p_!e_gp^*}\\
\pi_{X!}p_!p^*\ar[r]^\sim&\pi_{W!}p^*
}
\end{equation}

These fit together as
\[
\xymatrixcolsep{0pt}
\xymatrixrowsep{5pt}
\xymatrix{
\pi_{Y!}\circ\Sigma^{T_f}\circ f^*\ar[rrrr]^-{q^*\circ(\Sigma^{T_f}\circ f^*)}\ar[dddd]_{f_*}&&&&\pi_{Z!}\circ\Sigma^{T_{Z/W}}\circ g^*\circ p^*\ar[dddd]^{g_*\circ p^*}\\
&&(b1)&\\
&(a)&&(c)\\
&&(b2)&\\
\pi_{X!}\ar[rrrr]_{p^*}&&&&\pi_{W!}\circ p^*
}
\]
The  four diagrams (a), (b1), (b2) and (c) all commute; this follows from the commutativity of transformations acting on separate parts of a composition of functors, or the naturality of the unit and co-unit of an adjunction, or that fact that the exchange isomorphisms $Ex(\Delta^{*!})$ and $Ex(\Delta_{*!})$  are derived from the functoriality of composition for $(-)^*$ and $(-)_*$, combined with units and co-units of various adjunctions. For instance, the commutativity of the lower square in (a) is equivalent to the commutativity of the square
\[
\xymatrix{
f_\#f^*\ar[r]^-{u_q}\ar[ddd]_{e_f}&f_\#q_*q^*f^*\ar[d]^{Ex(\Delta_{\# *})}\\
&p_*g_\#q^*f^*\ar[d]^\wr\\
&p_*g_\#g^*p^*\ar[d]^{e_g}\\
\id\ar[r]_-{u_p}&p_*p^*
}
\]
We fill this in as follows
\[
\xymatrix{
f_\#f^*\ar[rrr]^-{u_q}\ar@{}@<-12pt>[rrr]_-{(i)}\ar[dr]^{u_p}\ar[dddd]_{e_f}\ar@{}@<15pt>[dddd]^{(v)}
&&&f_\#q_*q^*f^*\ar[dd]^{Ex(\Delta_{\# *})}\ar@{}@<-20pt>[dd]_{(ii)}
\ar[dl]_{u_p}\\
&p_*p^*f_\#f^*\ar[r]^-{u_q}\ar@{}@<-14pt>[r]_-{(iii)}&p_*p^*f_\#q_*q^*f^*\\
&p_*g_\#q^*f^*\ar[u]_\wr^{Ex(\Delta^*_\#)}\ar[r]_-{u_q}\ar[d]^\wr&p_*g_\#q^*q_*q^*f^*\ar[u]^\wr_{Ex(\Delta^*_\#)}\ar[r]_{e_q}
&p_*g_\#q^*f^*\ar[d]^\wr\ar@{}@<-96pt>[d]^{(iv)}\\
&p_*g_\#g^*p^*\ar[rr]^{\id}\ar[drr]_{e_g}\ar@{}@<-8pt>[rr]_(.7){(vi)}&&p_*g_\#g^*p^*\ar[d]^{e_g}\\
\id\ar[rrr]_-{u_p}&&&p_*p^*
}
\]
The commutativity of (i), (iii) and (vi) is obvious, that of (ii) is the definition of $Ex(\Delta_{\# *})$ and that of (iv) is the standard identity $(e_q\circ q^*)\circ(q^*\circ u_q)=\id$ for the unit and co-unit of an adjunction. The commutativity of (v) reduces to that of 
\[
\xymatrixcolsep{30pt}
\xymatrix{
f_\#f^*\ar[r]^-{u_p}\ar[d]_{e_f}&p_*p^*f_\#f^*\ar[d]_{e_f}
&p_*g_\#q^*f^*\ar[l]^\sim_{Ex(\Delta^*_\#)}\ar[d]^\wr\\
\id\ar[r]_-{u_p}&p_*p^*&p_*g_\#g^*p^*\ar[l]^{e_g}
}
\]
The commutativity of the left side is obvious and, using the definition of $Ex(\Delta^*_\#)$,  that of the right side reduces to the commutativity of
\[
\xymatrix{
g_\#q^*f^*\ar[d]_{u_f}&g_\#g_*p^*\ar[l]_\sim\ar[r]^-{e_g}&p^*\\
g_\#q^*f^*f_\#f^*\ar[r]_\sim &g_\#g^*p^*f_\#f^*\ar[r]_-{e_g}&p^*f_\#f^*\ar[u]_{e_f}
}
\]
Filling this in as
\[
\xymatrix{
&g_\#g_*p^*\ar[dl]_\sim\ar[r]^-{e_g}&p^*\\
g_\#q^*f^*\ar[d]_{u_f}\ar@{=}[r]&g_\#q^*f^*\ar[u]_\wr\\
g_\#q^*f^*f_\#f^*\ar[r]_\sim\ar[ru]_{e_f} 
&g_\#g^*p^*f_\#f^*\ar[r]_-{e_g}&p^*f_\#f^*\ar[uu]_{e_f}
}
\]
we see that the commutativity follows from the identity $(e_f\circ f^*)\circ(f^*\circ u_f)=\id$.

The commutativity of the remaining diagrams is much easier to verify and we leave the details to the reader.
\end{proof}

\begin{remark} \label{rem:basechange} Proper pull-back, smooth push-forward and  Gysin push-forward  are all compatible with base-change in the following sense. Fix a morphism $g:B'\to B$. 

 Let $p:Z\to W$ in $\Sch^G/B$ be proper. Using the base-change isomorphisms $Ex(\Delta^*_!)$, we see that the proper pull-back morphism $p^*:\pi_{W!}\to \pi_{Z!}\circ p^*$ is natural with respect to pull-back  by  $g:B'\to B$. In detail, we have the cartersian square
 \[
 \xymatrix{
Z_{B'}\ar[r]^{g_Z}\ar[d]_{p_{B'}}&Z\ar[d]^p\\
W_{B'}\ar[r]^{g_W}&W
}
\]
with $Z_{B'}=Z\times_BB'$, $W_{B'}=W\times_BB'$, both considered as objects in $\Sch^G/B'$. The base-change isomorphisms $Ex(\Delta^*_!)$ gives the commutative diagram 
 \[
\xymatrix{
\pi_{W_{B'}!}\circ g_W^*\ar[r]_-\sim\ar[d]_{p_{B'}^*}&g^*\circ \pi_{W!}\ar[d]^{p^*}\\
\pi_{Z_{B'}!}\circ p_{B'}^*\circ g_W^*\ar[r]_-\sim&g^*\circ\pi_{Z!}
}
\]

For $f:Z\to W$ smooth in $\Sch^G/B$, inducing $f_{B'}:Z_{B'}\to W_{B'}$,  the base-change isomorphisms $Ex(\Delta^*_!)$ and $Ex(\Delta^{*!})$, and the naturality of $\Sigma^{-}$ gives us the commutative diagram
\[
\xymatrix{
\pi_{Z_{B'}!}\circ \Sigma^{T_{f_{B'}}}\circ f_{B'}^*\circ g_W^*\ar[r]_-\sim\ar[d]_{f_{B*}}&
g^*\circ \pi_{Z!}\circ \Sigma^{T_{f}}\circ f^*\ar[d]^{f_*}\\
\pi_{W_{B'}!}\circ g_W^*\ar[r]_-\sim&g^*\circ \pi_{W!}
}
\]
If $s:W\to Z$ is a section to a smooth $f:Z\to W$,  we have the induced section $s_{B'}:W_{B'}\to Z_{B'}$ to $f_{B'}$, and  the base-change isomorphisms $Ex(\Delta^*_!)$ and $Ex(\Delta^{*!})$, and the naturality of $\Sigma^{-}$ give us the commutative diagram 
\[
\xymatrix{
\pi_{Z_{B'}!}\circ \Sigma^{T_{f_{B'}}}\circ f^{\prime*}\circ g_W^*\ar[r]_-\sim&
g^*\circ \pi_{Z!}\circ \Sigma^{T_{f}}\circ f^*\\
\pi_{W_{B'}!}\circ g_W^*\ar[r]_-\sim\ar[u]^{s_{B!}}&g^*\circ\pi_{W!}\ar[u]^{s_!}
}
\]
These are all diagrams of functors from $\SH^G(W)$ to $\SH^G(B')$, and all the arrows marked with a ``$\sim$'' are isomorphisms. 
 \end{remark}

In the next definition, we adapt the notation for a bivariant theory introduced in \cite{DJK}. 
\begin{definition}\label{def:BMHomology} For $\sE\in \SH^G(B)$, $\pi_Z:Z\to B$ in $\Sch^G/B$, $E_\bullet\in \sV^G(Z)$ and $v=\V(E_\bullet)$, define the {\em twisted Borel-Moore homology} with values in $\sE$ as
\[
\sE^\BM_{a,b}(Z, v):=\Hom_{\SH^G(B)}(\Sigma^{a,b}Z(v)_\BM, \sE)
\]
To simplify the notation, we write $\sE^\BM(Z, v)$ for $\sE^\BM_{0,0}(Z, v)$.

For an object $\sF\in \SH^G(B)$, we have the {\em  $\sE$-cohomology}
\[
\sE^{a,b}(\sF):=\Hom_{\SH^G(B)}(\sF, \Sigma^{a,b}\sE);
\]
for $\sX\in \sH^G_\bullet(B)$ define $\sE^{a,b}(\sX):=\sE^{a,b}(\Sigma^\infty_T\sX)$ and for $\sY\in 
\sH^G(B)$, define $\sE^{a,b}(\sY):=\sE^{a,b}(\sY_+)$. Finally, for $X\in \Sm^G/B$, $E_\bullet\in \sV^G(X)$ and $v=\V(E_\bullet)$ define the twisted $\sE$-cohomology
\[
\sE^{a,b}(X,v):=\sE^{a,b}(\pi_{X\#}(\Sigma^{-v}(1_X))=\Hom_{\SH^G(B)}(\Sigma^\infty_TX_+, \Sigma^{a,b}\Sigma^v\sE)
\]
\end{definition}

\begin{remark}\label{rem:GysinSmoothCartesian} Let
 \[
 \xymatrix{
 Z'\ar[r]^q\ar[d]_g&Z\ar[d]^f\\
 W'\ar[r]^p&W
 }
 \] 
 be a cartesian diagram in $\Sch^G/B$ with $p, q$ smooth and with $f:Z\to W$ a vector bundle. Let $s:W\to Z$ be a section and let $t:W'\to Z'$ be the induced section. Then the diagram
 \[
 \xymatrix{
(Z, f^*v+T_f)_\BM &&(W, v)_\BM\ar[ll]_-{s_!}\\
(Z', q^*(f^*v+ T_f)+T_q)_\BM \ar[u]^{q_*}&
(Z', g^*(p^*v+ T_p)+T_g)_\BM \ar@{=}[l]&(W', p^*v+T_p)_\BM\ar[u]^{p_*}\ar[l]^-{t_!}
  }
\]
commutes.  To see this, we may  replace the maps $s_!$, $t_!$ with their respective inverses $f_*$, $g_*$ (Lemma~\ref{lem:SectionIsos}), and then the commutativity follows from the functoriality of smooth push-forward: $p_*g_*=f_*q_*$.

In fact, the Gysin push-forward commutes with smooth push-forward in cartesian squares, without assuming the smooth maps are vector bundles, but as we do not need this result, we omit the proof.
\end{remark}

\begin{remark} The usual operations on Borel-Moore homology: proper push-forward, smooth pull-back, intersection with a section, all follow by applying proper pull-back $p^*$, smooth push-forward $f_*$ or  Gysin push-forward $s_!$ to morphisms $\Sigma^{**}(-)/B^\BM(-)\to \sE$. Explicitly, a proper map $p:Z\to W$ in $\Sch^G/B$ induces the functorial proper push-forward
\[
p_*: \sE^\BM_{a,b}(Z, p^*v)\to \sE^\BM_{a,b}(W, v)
\]
defined by $p_*:=(p^*)^*$,  a smooth map $f:Z\to W$   in $\Sch^G/B$ induces the functorial smooth pull-back
\[
f^*:\sE^\BM_{a,b}(W, v)\to\sE^\BM_{a,b}(Z, f^*v+T_f)
\]
defined by $f^*:=(f_*)^*$, and for $f:Z\to W$ a smooth morphism in $\Sch^G/B$ with a section $s:W\to Z$, we have the Gysin map
\[
s^!:\sE^\BM_{a,b}(Z, f^*v+T_f)\to \sE^\BM_{a,b}(W, v)
\]
defined by $s^!:=(s_!)^*$, and satisfying $s^!\circ f^*=\id$.  We hope that the context will enable the reader  to distinguish the {\em maps} $p_*$, $f^*$ and $s^!$ from the {\em functors} $p_*$, $f^*$ and $s^!$.

 With this translation, Lemma~\ref{lem:BaseChange} says that on twisted Borel-Moore homology,  proper push-forward commutes with smooth pull-back in a cartesian square, and Remark~\ref{rem:GysinSmoothCartesian} says that the Gysin map commutes with smooth pull-back  in a cartesian square of vector bundles.

For $\pi_X:X\to B$ in $\Sm^G/B$, the purity isomorphism $\pi_{X!}\cong \pi_{X\#}\circ \Sigma^{-T_X}$ gives the isomorphism
\[
\sE^\BM_{a,b}(X, v)\cong  \sE^{-a,-b}(X, T_X-v).
\]

Finally, for $f:Z\to X$ a morphism in $\Sch^G/B$ with $X\in \Sm^G/B$, and $\sE\in \SH^G(B)$ a commutive ring spectrum (i.e. commutative monoid object in $\SH^G(B)$), we have the cap product
\[
-\cap f^*(-): \sE^\BM_{a,b}(Z, v)\times \sE^{p,q}(W, w)\to 
\sE^\BM_{a-p,b-q}(Z, v-f^*w)
\]
defined via the monoidal structure
\begin{multline*}
\Hom_{\SH^G(B)}(\Sigma^{a,b}Z(v)_\BM, \sE)\times \Hom_{\SH^G(B)}(\Sigma^\infty_TW_+, \Sigma^{p,q}\Sigma^w\sE)\\\to 
\Hom_{\SH^G(B)}(\Sigma^{a-p,b-q}Z(v)_\BM\wedge  \Sigma^{-w}\Sigma^\infty_TW_+,\sE\wedge\sE),
\end{multline*}
followed by the multiplication map $\sE\wedge\sE\to \sE$,
the purity isomorphism 
\[
Z(v)_\BM\wedge  \Sigma^{-w} \Sigma^\infty_TW_+\cong Z\times_BW(p_1^*v-p_2^*w+p_2^*T_W)_\BM
\]
and the Gysin map for the graph morphism $\gamma_f:Z\to Z\times_BW$
\[
\gamma_f^!:\sE^\BM_{a-p,b-q}(Z\times_BW, p_1^*v-p_2^*w+p_2^*T_W)\to
\sE^\BM_{a-p,b-q}(Z, v-f^*w).
\]

\end{remark}

\section{The intrinsic stable normal cone}\label{sec:NormalThom}
Take $Z\in \Sch^G/B$. Since $Z$ is $G$-quasi-projective over $B$,  $Z$ admits a closed immersion $i:Z\to M$ in $\Sch^G/B$ with $M\in \Sm^G/B$. As in \cite{BF}, we have the {\em normal cone} $\mathfrak{C}_{i}$:
\[
\mathfrak{C}_{i}:=\Spec_{\sO_X}(\oplus_{n\ge0}\sI_Z^n/\sI_Z^{n+1}),
\]
where $\sI_Z$ is the ideal sheaf of $Z\subset M$.  Let $p_i:\mathfrak{C}_{i}\to Z$ be the projection and $\sigma_i:\mathfrak{C}_i\to M$ the composition $i\circ p_i$. As before, we denote the structure morphism for $Y\in \Sch^G/B$ by $\pi_Y:Y\to B$. For $f:Y\to Z$ a smooth morphism in $\Sch^G/B$, we often denote the relative tangent bundle $T_f$ by $T_{Y/Z}$ and in case $f=\pi_Y:Y\to B$ is the (smooth) structure morphism we may write $T_Y$ instead of $T_{Y/B}$

\begin{lemma} \label{lem:CanonIso} Suppose we have closed immersions $i:Z\to M$, $i':Z\to M'$ in $\Sch^G_B$ with $M, M'\in \Sm^G/B$. Then there is a canonical isomorphism 
\[
\psi_{i,i'}: \mfC_{i'}(\sigma_{i'}^*T_{M'})_\BM\xrightarrow{\sim} \mfC_{i}(\sigma_i^* T_{M})_\BM.
\]
If we have a third closed immersion $i'':Z\to M''$ then $\psi_{i,i'}\circ \psi_{i',i''}=\psi_{i,i''}$.
\end{lemma}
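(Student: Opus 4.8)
The plan is to reduce everything to the case where one of the two embeddings factors through the other via a \emph{smooth} morphism, and then to splice in the product $M\times_B M'$ (and, for the cocycle identity, the triple product $M\times_B M'\times_B M''$).

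\emph{The smooth case.} Suppose $q\colon X\to Y$ is a smooth morphism with $X,Y\in\Sm^G/B$, and that we have closed immersions $i_X\colon Z\to X$, $i_Y\colon Z\to Y$ with $q\circ i_X=i_Y$. Write $\mfC_X:=\mfC_{Z\subset X}$, $\mfC_Y:=\mfC_{Z\subset Y}$, with projections $p_X,p_Y$ to $Z$ and $\sigma_X=i_X p_X$, $\sigma_Y=i_Y p_Y$. Functoriality of the normal cone gives a canonical $Z$-morphism $\rho_q\colon\mfC_X\to\mfC_Y$ (so $p_Y\rho_q=p_X$), and the elementary geometry of normal cones under smooth morphisms (cf.\ \cite{BF}) shows that $\rho_q$ is an affine bundle modelled on the pullback to $\mfC_Y$ of the vector bundle $i_X^*T_{X/Y}$; in particular $\rho_q$ is smooth with $T_{\mfC_X/\mfC_Y}\cong\rho_q^*p_Y^*i_X^*T_{X/Y}=p_X^*i_X^*T_{X/Y}=\sigma_X^*T_{X/Y}$. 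Using the exact sequence $0\to T_{X/Y}\to T_{X/B}\to q^*T_{Y/B}\to 0$ together with the additivity isomorphism for $\Sigma^{-}$ of \S1, the fact that $q^*$ is symmetric monoidal, and the identity $q\sigma_X=i_Y p_X=\sigma_Y\rho_q$, I would obtain
\[
\sigma_X^*\Th_X(T_{X/B})\ \cong\ \Sigma^{\sigma_X^*T_{X/Y}}\rho_q^*\bigl(\sigma_Y^*\Th_Y(T_{Y/B})\bigr)\ \cong\ \Sigma^{T_{\mfC_X/\mfC_Y}}\rho_q^*\bigl(\sigma_Y^*\Th_Y(T_{Y/B})\bigr)\ \cong\ \rho_q^!\bigl(\sigma_Y^*\Th_Y(T_{Y/B})\bigr),
\]
the last isomorphism being the canonical $\rho_q^!\cong\Sigma^{T_{\mfC_X/\mfC_Y}}\rho_q^*$ for the smooth morphism $\rho_q$. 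Applying $\pi_{\mfC_X!}=\pi_{\mfC_Y!}\circ\rho_{q!}$ and the co-unit $e_{\rho_q}\colon\rho_{q!}\rho_q^!\to\id$, which is an isomorphism because $\rho_q$ is an affine bundle (the $\A^1$-homotopy property of \S1), produces a canonical isomorphism
\[
\Psi_q\colon\ \pi_{\mfC_X!}\bigl(\sigma_X^*\Th_X(T_{X/B})\bigr)\ \xrightarrow{\ \sim\ }\ \pi_{\mfC_Y!}\bigl(\sigma_Y^*\Th_Y(T_{Y/B})\bigr).
\]

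\emph{Transitivity.} Next I would check that for composable smooth morphisms $X\xrightarrow{q}Y\xrightarrow{r}W$ in $\Sm^G/B$, with $Z$ compatibly closed-embedded, one has $\Psi_{rq}=\Psi_r\circ\Psi_q$. Since $\rho_{rq}=\rho_r\rho_q$ and each relative tangent bundle $T_{\mfC_{(-)}/\mfC_{(-)}}$ is, by the description above, the pullback of the corresponding $T_{(-)/(-)}$, this comes down to two coherences: the compatibility of the $\Sigma^{-}$ additivity isomorphisms with the two ways of regrouping the filtration of $T_{X/B}$ with successive quotients $T_{X/Y}$, $q^*T_{Y/W}$, $(rq)^*T_{W/B}$ (this is part of the functoriality of $\Sigma^{-}\colon\sV^G(-)\to\Aut(\SH^G(-))$ recalled in \S1), together with the standard identity $e_{\rho_{rq}}=e_{\rho_r}\circ\bigl(\rho_{r!}\,e_{\rho_q}\,\rho_r^!\bigr)$ for co-units of composed adjunctions and the functoriality of $(-)^!$ and $(-)_!$. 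Thus $q\mapsto\Psi_q$ is a (contravariant) functor on smooth morphisms between objects of $\Sm^G/B$ that are compatible with the closed embeddings of $Z$.

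\emph{The general case and the cocycle identity.} Given $i\colon Z\to M$ and $i'\colon Z\to M'$, set $N:=M\times_B M'$; since $M,M'\in\Sm^G/B$ and $Z$ is separated over $B$ (being $G$-quasi-projective), $(i,i')\colon Z\to N$ is a closed immersion into $N\in\Sm^G/B$, and the projections $pr_1\colon N\to M$, $pr_2\colon N\to M'$ are smooth with $pr_1\circ(i,i')=i$, $pr_2\circ(i,i')=i'$. I would then \emph{define} $\psi_{i,i'}:=\Psi_{pr_1}\circ\Psi_{pr_2}^{-1}$. For a third immersion $i''\colon Z\to M''$, form $N':=M\times_B M'\times_B M''$ with $(i,i',i'')\colon Z\to N'$ and the evident smooth projections onto $M\times_B M'$, $M'\times_B M''$, $M\times_B M''$ and onto $M$, $M'$, $M''$; all are compatible with the embeddings of $Z$. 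Writing each of $\psi_{i,i'}$, $\psi_{i',i''}$, $\psi_{i,i''}$ as a zig-zag of $\Psi$'s factoring through $\pi_{\mfC_{Z\subset N'}!}(\,\cdot\,)$ and using transitivity to collapse the zig-zags, a purely formal computation yields $\psi_{i,i'}\circ\psi_{i',i''}=\psi_{i,i''}$; taking the three immersions equal shows $\psi_{i,i}=\id$, so in particular the construction of $\psi_{i,i'}$ does not depend on the ordering of the two projections.

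I expect the main obstacle to be twofold. First, one must pin down the precise classical statement that $\rho_q\colon\mfC_{Z\subset X}\to\mfC_{Z\subset Y}$ is an affine bundle modelled on $\rho_q^*p_Y^*i_X^*T_{X/Y}$: this is the one genuinely geometric (non-formal) input, and everything downstream — the identification $T_{\mfC_X/\mfC_Y}\cong\sigma_X^*T_{X/Y}$ and the applicability of $\A^1$-invariance to $\rho_q$ — rests on it. Second, the transitivity step needs care: the coherence of the $\Sigma^{-}$ additivity isomorphisms under iterated extensions has to be extracted from the fact (Ayoub, Riou, with the equivariant version recalled in \S1) that $\Sigma^{-}$ is a genuine functor out of the $K$-theory path groupoid, and not merely a family of ad hoc isomorphisms — this is precisely the point at which the three ``formal ingredients'' listed in the introduction are doing the work.
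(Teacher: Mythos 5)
Your proposal is correct and follows essentially the same route as the paper: reduce to a smooth morphism between the ambient schemes via the product embedding $(i,i')\colon Z\to M\times_B M'$, use the torsor/affine-bundle structure of $\mfC_{Z\subset M'}\to\mfC_{Z\subset M}$ under $\sigma_{i'}^*T_{M'/M}$ together with the tangent exact sequence, the additivity of $\Sigma^{-}$ and $\A^1$-invariance of the co-unit, and then get the cocycle identity from functoriality of this construction for composable smooth morphisms. Your write-up is merely a bit more explicit than the paper about assembling $\psi_{i,i'}$ as $\Psi_{pr_1}\circ\Psi_{pr_2}^{-1}$ and checking the cocycle through the triple product, which the paper compresses into ``we may assume there is a smooth morphism $g\colon M'\to M$.''
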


\begin{proof} Suppose we have defined the required isomorphism $\psi_g:=\psi_{i,i'}$ for each commutative diagram in $\Sch^G_B$
\begin{equation}\label{eqn:SmoothLift}
\xymatrix{
Z\ar[d]_{i'}\ar[dr]^i\\
M'\ar[r]_g&M
}
\end{equation}
with $g$ a smooth morphism in $\Sm^G/B$ and $i, i'$ closed immersions, satisfying $\psi_{gg'}=\psi_g\circ\psi_{g'}$ for each commutative diagram
\[
\xymatrix{
&Z\ar[dl]_{i''}\ar[d]^{i'}\ar[dr]^i\\
M''\ar[r]_{g'}&M'\ar[r]_g
&M
}
\]
For an arbitrary pair $i:Z\to M$, $i':Z\to M'$, we have the closed immersion $(i,i'):Z\to M\times_BM'$ and set $\psi_{i,i'}:=\psi_{p_1}\circ\psi_{p_2}^{-1}$, which solves our problem.

We consider a commutative diagram \eqref{eqn:SmoothLift}.  The smooth morphism $g$ induces a smooth morphism $\mfC(g):\mfC_{i'}\to \mfC_{i}$, giving  the commutative diagram
\[
\xymatrix{
\mfC_{i'}\ar[r]^{\mfC(g)}\ar[d]_{\sigma_{i'}}&\mfC_{i}
\ar[d]^{\sigma_i}\\
M'\ar[r]_g&M
}
\]
The projection $\mfC(g)$ makes $\mfC_{i'}$ into a torsor over $\mfC_{i}$ for the vector bundle $p_i^*i^{\prime*}T_{M'/M}$, giving a canonical identification
\[
T_{\mfC_{i'}/\mfC_{i}}\cong \sigma_{i'}^*T_{M'/M}.
\]

The exact sequence
\[
0\to T_{M'/M}\to T_{M'}\to g^*T_{M}\to 0
\]
gives the canonical isomorphism 
\[
 \Sigma^{\sigma_{i'}^*T_{M'}}  \xrightarrow{\theta_g}\Sigma^{\sigma_{i'}^*T_{M'/M}}
\circ \Sigma^{\mfC(g)^*\sigma_i^*T_{M}}.
\]

By Lemma~\ref{lem:SectionIsos}, the smooth push-forward 
\[
\mfC(g)_*:\pi_{\mfC_{i'}!}\circ\Sigma^{\sigma_{i'}^*T_{M'/M}}\circ \mfC(g)^*\to
\pi_{\mfC_{i}!}
\]
is an isomorphism. 

Composing $\mfC(g)_*$ with   the isomorphism $\Theta_g$, defined as the composition
\begin{multline*}
\pi_{\mfC_{i'}!}\circ\Sigma^{\sigma_{i'}^*T_{M'}}\circ \mfC(g)^*\xymatrix{\ar[r]^{\theta_g}_\sim&}
\pi_{\mfC_{i'}!}\circ\Sigma^{\sigma_{i'}^*T_{M'/M}}
\circ \Sigma^{\mfC(g)^*\sigma_i^*T_{M}}\circ \mfC(g)^*\\\cong
\pi_{\mfC_{i'}!}\circ\Sigma^{\sigma_{i'}^*T_{M'/M}}\circ \mfC(g)^*\circ \Sigma^{\sigma_i^*T_{M}}
\end{multline*}
gives us the isomorphism
\[
 \mfC(g)_*\circ\Theta_g:\pi_{\mfC_{i'}!}\circ\Sigma^{\sigma_{i'}^*T_{M'/B}}\circ \mfC(g)^*\to 
\pi_{\mfC_{i}!}\circ \Sigma^{\sigma_i^*T_{M}}.
\]
Evaluating at $1_{\mfC_{i}}$ gives the isomorphism
\[
\psi_g: \mfC_{i'}(\sigma_{i'}^*T_{M'})_\BM\to
\mfC_{i}(\sigma_i^*T_{M})_\BM.
\]

Suppose we have another smooth morphism $g':M''\to M'$ and a closed immersion $i'':Z\to M''$ with $g'\circ i''=i'$. Let $\Xi_{g,g'}$ be the isomorphism
\begin{multline*}
\pi_{\mfC_{i''}!}\circ\Sigma^{\sigma_{i''}^*T_{M''/M}}\circ\mfC(gg')^*\circ\Sigma^{\sigma_i^*T_{M}}\\
\xrightarrow{\Xi_{g,g'}} 
\pi_{\mfC_{i''}!}\circ\Sigma^{\sigma_{i''}^*T_{M''/M'}}\circ\mfC(g')^*\circ\Sigma^{\sigma_{i'}^*T_{M'/M}}\circ\mfC(g)^*\circ \Sigma^{\sigma_i^*T_{M}}
\end{multline*}
defined as the composition
\begin{multline*}
\pi_{\mfC_{i''}!}\circ\Sigma^{\sigma_{i''}^*T_{M''/M}}\circ\mfC(gg')^*\circ\Sigma^{\sigma_i^*T_{M}}
\\\xrightarrow{\theta_{g'}}
\pi_{\mfC_{i''}!}\circ\Sigma^{\sigma_{i''}^*T_{M''/M'}}\circ\Sigma^{\mfC(g')^*\sigma_{i'}^*T_{M'/M}}\circ\mfC(g')^*\circ \mfC(g)^*\circ\Sigma^{\sigma_i^*T_{M}}\\\cong 
\pi_{\mfC_{i''}!}\circ\Sigma^{\sigma_{i''}^*T_{M''/M'}}\circ\mfC(g')^*\circ\Sigma^{\sigma_{i'}^*T_{M'/M}}\circ\mfC(g)^*\circ \Sigma^{\sigma_i^*T_{M}}
\end{multline*}
The functoriality of smooth push-forward gives the identity
\begin{equation}\label{eqn:Fun}
\mfC(gg')_*=\mfC(g)_*\circ \mfC(g')_*\circ\Xi_{g,g'}.
\end{equation}
as maps from $\pi_{\mfC_{i''}!}\circ \Sigma^{\sigma_{i''}^*T_{M''}}\circ \mfC(gg')^*\circ\Sigma^{\sigma_{i}^*T_{M}}$ to 
$\pi_{\mfC_i!}\circ\Sigma^{\sigma_{i}^*T_{M}}$.

We then have
\begin{align*}
\psi_g\circ \psi_{g'}&=\mfC(g)_*\circ \Theta_g\circ \mfC(g')_*\circ \Theta_{g'}(1_{\mfC_i})\\
&=\mfC(g)_*\circ\mfC(g')_*\circ \Theta'_g\circ \Theta_{g'}(1_{\mfC_i})\\
&=\mfC(g)_*\circ\mfC(g')_*\circ\Xi_{g,g'}\circ \Theta_{gg'}(1_{\mfC_i})\\
&=\mfC(gg')_*\circ \Theta_{gg'}(1_{\mfC_i})\\
&=\psi_{gg'},
\end{align*}
where  $\Theta'_g$  is the isomorphism
\begin{multline*}
\pi_{\mfC_{i''}}\circ\Sigma^{\sigma_{i''}^*T_{M''/M'}}\circ\mfC(g')^*\circ\Sigma^{\sigma_{i'}^*T_{M'}}\circ \mfC(g)^*\\\to
\pi_{\mfC_{i''}}\circ\Sigma^{\sigma_{i''}^*T_{M''/M'}}\circ\mfC(g')^*\circ\Sigma^{\sigma_{i'}^*T_{M'/M}}\circ \mfC(g)^*\circ \Sigma^{\sigma_{i'}^*T_{M}}
\end{multline*}
constructed similarly to $\Theta_g$.  All the above identities are easy consequences of the naturality of the functor $\Sigma^{-}:D^\perf_{G, iso}(-)\to \Aut\SH^G(-)$, and \eqref{eqn:Fun}.
\end{proof}

Relying on Lemma~\ref{lem:CanonIso}, we can prove the main result of this section: the construction of an analog of the Behrend-Fantechi intrinsic normal cone in the setting of the stable motivic homotopy category.

\begin{theorem}\label{thm:IntStableNormalCone} For each $Z\in \Sch^G/B$, there is an object $\Cst_Z\in \SH^G(B)$ satisfying the following:\\[5pt]
1. Let $i:Z\to M$ be a closed immersion in $\Sch^G/B$ with $M$ smooth over $B$. Then there is a  canonical isomorphism
\[
\alpha_i:\Cst_Z\xrightarrow{\sim} \mfC_{i}(\sigma_i^*T_{M})_\BM
\]
2. For $i':Z\to M$ another closed immersion in $\Sch^G/B$,   have 
\[
\alpha_i=\psi_{i,i'}\circ\alpha_{i'},
\]
where $\psi_{i,i'}:\mfC_{i'}(\sigma_{i'}^*T_{M'})_\BM\xrightarrow{\sim}
\mfC_{i}(\sigma_i^*T_{M})_\BM$ is the isomorphism defined in 
Lemma~\ref{lem:CanonIso}.
\end{theorem}

\begin{proof} Choose a closed immersion $i_0:Z\to M_0$ in $\Sch^G/B$ with $M_0\in \Sm^G/B$, which exists by the definition of the category $\Sch^G/B$, and define
\[
\Cst_Z:=\mfC_{i_0}(\sigma_{i_0}^*T_{M_0})_\BM. 
\]
For   $i:Z\to M$ a closed immersion $\Sch^G/B$ with $M\in \Sm^G/B$, Lemma~\ref{lem:CanonIso} gives us   the   isomorphism
\[
\alpha_i:=\psi_{i,i_0}:\Cst_Z\to \mfC_{i}(\sigma_i^*T_{M})_\BM
\]
and shows that if $i':Z\to M'$ is another closed immersion,  the diagram
\[
\xymatrix{
\Cst_Z\ar[r]^-{\alpha_{i'}}\ar[dr]_-{\alpha_i}&
\mfC_{i'}(\sigma_{i'}^*T_{M'})_\BM\ar[d]^{\psi_{i,i'}}\\
&\mfC_{i}(\sigma_i^*T_{M})_\BM
}
\]
commutes. 
\end{proof}

\begin{definition} \label{Def:IntStableNormalCone} For  $Z$  in $\Sch^G/B$, we call $\Cst_Z\in \SH^G(B)$ the {\em intrinsic stable normal cone} of $Z$. 
\end{definition}

\begin{ex} Suppose $\pi_Z:Z\to B$ is smooth over $B$. Then we may take the identity for $i:Z\to M$, giving $\mfC_{i}=Z$ and $\Cst_Z=\pi_{Z!}\circ \Sigma^{T_{Z}}(1_Z)=\pi_{Z\#}(1_Z)=\Sigma^\infty_TZ_+$ in $\SH^G(B)$. 
\end{ex}

\begin{remark} Here is an intuitive justification for our definition of $\Cst_Z$. Let $V\to Z$ be a rank $r$ vector bundle on a $B$-scheme $Z$ and let $p:C\to Z$ be a morphism in $\Sch^G/B$. We consider $V$ as a group-scheme over $Z$ via fiberwise vector addition.

Suppose we have a  (smooth) morphism $q:C\to \bar{C}$ over $Z$ that makes $C$ a principal $V$-bundle 
over $\bar{C}$; let $\bar{p}:\bar{C}\to Z$ be the structure morphism and let $\pi_C:C\to B$, $\pi_{\bar{C}}:\bar{C}\to B$ be the structure morphisms to $B$. We have a canonical isomorphism $T_{C/\bar{C}}\cong p^*V$, the purity isomorphism $q_!\circ\Sigma^{p^*V}\cong q_\#$ and the homotopy invariance isomorphism $1_{\bar{C}}\cong q_\#(1_C)$.  This gives the isomorphism in $\SH^G(B)$
\begin{multline*}
\bar{C}_\BM:=\pi_{\bar{C}!}(1_{\bar{C}})\cong \pi_{\bar{C}!}(q_\#(1_C))\cong
\pi_{\bar{C}!}\circ q_!\circ\Sigma^{p^*V}(1_C)\cong \pi_{C!}(\Sigma^{p^*V}(1_C))=C(p^*V)_\BM .
\end{multline*}

Now suppose  we just have a action of $V$ on $C$ over $Z$, giving us the quotient stack $[V\backslash C]$ with projection $\pi:C\to [V\backslash C]$. In the category of Artin stacks, $\pi$ is a principal $V$-bundle, so we could reasonably {\em define} the Borel-Moore motive of $[V\backslash C]$ by 
\[
[V\backslash C]_\BM:=C(p^*V)_\BM.
\]

The cone $p:\mfC_i\to Z$ for a closed immersion $i:Z\to M$ has the canonical action by $i^*T_M$, giving the  Behrend-Fantechi intrinsic normal cone $\mfC_Z:=[i^*T_M\backslash \mfC_i]$.  We should thus define the Borel-Moore motive of  $\mfC_Z$ by
\[
(\mfC_Z)_\BM=[i^*T_M\backslash \mfC_i]_\BM:=\mfC_i(\sigma_i^*T_M)_\BM,
\]
which is exactly our definition of  $\Cst_Z$.
\end{remark}

\begin{lemma}\label{lem:HomotopyInv} Suppose we have a closed immersion $i:Z\to M$ with $M\in\Sm^G/B$ and an affine space bundle $q:V\to M$, giving the cartesian diagram
\[
\xymatrix{
Z'\ar[r]^{i'}\ar[d]_{q_Z}&V\ar[d]^q\\
Z\ar[r]_i&M
}
\]
Let $\mfC(q):\mfC_{i'}\to \mfC_{i}$ be the morphism induced by $(q_Z, q)$ and let 
\begin{equation}\label{eqn:StableConeIso}
\Cst(q):\Cst_{Z'}\to \Cst_Z
\end{equation}
be defined as the composition
\begin{align*}
\Cst_{Z'}&\xymatrix{\ar[r]^{\alpha_{i'}}&}\pi_{\mfC_{i'}!}\circ \Sigma^{\sigma_{i'}^*T_{V}}(1_{\mfC_{i'}})\\
&\xymatrix{\ar[r]^\theta_\sim&} \pi_{\mfC_{i'}!}\circ \Sigma^{\sigma_{i'}^*q^*V}\circ \Sigma^{\sigma_{i'}^*q^*T_{M}}\circ \mfC(q)^*(1_{\mfC_{i}})\\
&\xymatrix{\ar[r]_\sim&}  \pi_{\mfC_{i'}!}\circ \Sigma^{\sigma_{i'}^*q^*V}\circ \Sigma^{\mfC(q)^*\sigma_{i}^*T_{M}}\circ \mfC(q)^*(1_{\mfC_{i}})\\
&\xymatrix{\ar[r]_\sim&}  \pi_{\mfC_{i'}!}\circ \Sigma^{\sigma_{i'}^*q^*V}\circ \mfC(q)^*\circ\Sigma^{\sigma_{i}^*T_{M}}(1_{\mfC_{i}})\\
&\hskip2pt\xrightarrow{\mfC(q)_*}
 \pi_{\mfC_{i}!}\circ\Sigma^{\sigma_{i}^*T_{M}}(1_{\mfC_{i}})\\\
&\xymatrix{\ar[r]^{\alpha_i^{-1}}&}\Cst_Z,
\end{align*}
where the  isomorphism $\theta$ is induced by the exact sequence
\[
0\to q^*V\to T_{V}\to q^*T_{M}\to 0.
\]
Then $\Cst(q)$ is an isomorphism. Moreover, if we have an extension of our diagram to a cartesian diagram in $\Sch^G/B$
\[
\xymatrix{
Z''\ar[r]^{i''}\ar[d]_{q'_Z}&W\ar[d]^{q'}\\
Z'\ar[r]^{i'}\ar[d]_{q_Z}&V\ar[d]^q\\
Z\ar[r]_i&M
}
\]
with $q':W\to V$ an affine space bundle, then
\[
\Cst(q\circ q')=\Cst(q)\circ \Cst(q').
\]
\end{lemma}

\begin{proof} We use the closed immersion $i':Z'\to V$ to compute $\Cst_{Z'}$. The  morphism
\[
\mfC(q):\mfC_{i'}\to \mfC_{i},
\]
identifies $\mfC_{i'}$ with $\mfC_{i}\times_MV$, and hence by Lemma~\ref{lem:SectionIsos}, the map $\mfC(q)_*$ is an isomorphism, which implies that $\Cst(q)$ is an isomorphism as well.

The  functoriality $\Cst(q\circ q')=\Cst(q)\circ \Cst(q')$ follows the functoriality of smooth push-forward, the naturality of the isomorphisms $\theta_{-}$ and  naturality of $\Sigma^{-}:\sV^G(-)\to \Aut(\SH^G(-))$, as in the proof of Lemma~\ref{lem:CanonIso}.
\end{proof}

\section{The fundamental class}\label{sec:FundClass} The next step in the construction is to define a fundamental class in co-homotopy of our intrinsic stable normal cone: $[\Cst_Z]\in \mS_B^{0,0}(\Cst_Z)$. We do this by the method of specialization to the normal cone, suitably interpreted.

Choose as before a closed immersion $i:Z\to M$ in $\Sch^G/B$ with $M\in \Sm^G/B$. Let  
\[
\tilde{\pi}:\widetilde{M\times\A^1}\to M\times\A^1 
\]
be the blow up of $M\times\A^1$ along $Z\times0$, that is
\[
\widetilde{M\times\A^1}=\Proj_{M\times\A^1}\oplus_{n\ge0}\sI_{Z\times0}^n.
\]
Writing $M\times\A^1=\Spec\sO_M[t]$, the element $t\in \sI_{Z\times0}$, considered as an element of  $\oplus_{n\ge0}\sI_{Z\times0}^n$ of degree one, gives a $G$-invariant section of $\sO(1)$, which we denote by $T$. We define 
\[
Def(i):=\widetilde{M\times\A^1}\setminus (T=0)\in \Sch^G/B,
\]
so 
\[
Def(i)=\Spec_{M\times\A^1}(\oplus_{n\ge0}\sI_{Z\times0}^n[T^{-1}])_0,
\]
where the subscript 0 denotes the subsheaf of homogeneous sections of degree 0. The projection $p:Def(i)\to M\times\A^1$ is flat, $p^{-1}(M\times(\A^1\setminus\{0\})$ is isomorphic via $p$ to $M\times(\A^1\setminus\{0\})$ and $p^{-1}(M\times 0)=\mfC_{i}$. Thus $\mfC_{i}$ is a principal effective Cartier divisor on $Def(i)$, with ideal sheaf $(t)\sO_{Def(i)}$. Let $i_{\mfC}:\mfC_{i}\to Def(i)$ be the inclusion, and let $j:M\times(\A^1\setminus\{0\})\to Def(i)$ be the open complement. 

The localization triangle \eqref{eqn:LocTria}, twisted by $\Sigma^{p^*p_1^*T_{M/B}}$ and pushed forward by $\pi_{Def(i)!}$, gives us the distinguished triangle in $\SH^G(B)$
\[
\pi_{M\times(\A^1\setminus\{0\})!}\circ \Sigma^{p_1^*T_{M}}(1_{M\times(\A^1\setminus\{0\})})\to 
\pi_{Def(i)!}\circ \Sigma^{p^*p_1^*T_{M}}(1_{Def(i)})\to \mfC_{i}(\sigma_i^*T_{M})_\BM\xrightarrow{+1}.
\]
The isomorphism 
\[
T_{M\times (\A^1\setminus\{0\})}\cong p_1^*T_{M}\oplus p_2^*T_{ \A^1\setminus\{0\}}
\]
and the canonical isomorphism $T_{ \A^1\setminus\{0\}}\cong \sO_{ \A^1\setminus\{0\}}$ gives the isomorphisms
\begin{align*}
\pi_{M\times(\A^1\setminus\{0\})!}\circ \Sigma^{p_1^*T_{M}}(1_{M\times(\A^1\setminus\{0\})})&\cong \pi_{M\#}(1_M)\wedge_B\pi_{\A^1\setminus \{0\}\#}(\Sigma_T^{-1}1_{\A^1\setminus \{0\}})\\
&\cong \Sigma_T^{-1}M\times(\A^1\setminus\{0\})_+\\
&\cong \Sigma_{S^1}^{-1}\Sigma_{\G_m}^{-1}M\times(\A^1\setminus\{0\})_+
\end{align*}

Our distinguished triangle thus  gives us the map in $\SH^G(B)$
\[
\del:\mfC_i(\sigma_i^*T_M)_\BM\to \Sigma_{\G_m}^{-1} M\times(\A^1\setminus\{0\})_+.
\]
Let  
\[
\bar{p}^M_2:M\times(\A^1\setminus\{0\})_+\to \G_m
\]
be the projection $p_2:M\times
(\A^1\setminus\{0\})_+\to (\A^1\setminus\{0\})_+$ followed by the quotient map $(\A^1\setminus\{0\})_+\to (\A^1\setminus\{0\},\{1\})=\G_m$. This gives us the map
\[
\Sigma_{\G_m}^{-1}\bar{p}^M_2\circ \del:\mfC_i(\sigma_i^*T_M)_\BM\to \mS_B
\]

\begin{definition}\label{def:FundClass0} The fundamental class $[\mfC_i]\in \mS^\BM_B(\mfC_i, \sigma_i^*T_M)$ is the element represented by $\Sigma_{\G_m}^{-1}\bar{p}^M_2\circ \del$.

If $\sE$ is a commutative monoid in $\SH^G(B)$ with unit $\epsilon_\sE:\mS_B\to \sE$, we define the fundamental class $[\mfC_i]_\sE\in \sE^\BM(\mfC_i, \sigma_i^*T_M)$ by composing $[\mfC_i]$ with $\epsilon_\sE$. \end{definition}

We have  the canonical isomorphism $\alpha_i:\Cst_Z\to  \mfC_{i}(\sigma_i^*T_{M})_\BM$,
giving the map in $\SH^G(B)$
\begin{equation}\label{eqn:Boundary}
[\sC_i]\circ\alpha_i:\Cst_Z\to  \mS_B
\end{equation}

\begin{lemma}\label{lem:FundClassInd} The map 
\[
[\sC_i]\circ\alpha_i:\Cst_Z\to   \mS_B
\]
is independent of the choice of closed immersion $i:Z\to M$.
\end{lemma}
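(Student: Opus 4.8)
The plan is to argue as in the proof of Lemma~\ref{lem:CanonIso}: reduce to the case where one immersion factors through the other via a smooth morphism, and then compare the two deformation‑to‑the‑normal‑cone constructions by means of a smooth pushforward. For the reduction, given closed immersions $i\colon Z\to M$ and $i'\colon Z\to M'$ with $M,M'\in\Sm^G/B$, I would introduce the closed immersion $(i,i')\colon Z\to M\times_BM'$; since $M\times_BM'\in\Sm^G/B$ and the two projections $\mathrm{pr}_M,\mathrm{pr}_{M'}$ are smooth with $\mathrm{pr}_M\circ(i,i')=i$ and $\mathrm{pr}_{M'}\circ(i,i')=i'$, it suffices to treat the case of a single smooth morphism $g\colon M'\to M$ with $g\circ i'=i$ (then run that case once with $g=\mathrm{pr}_M$, $\tilde\imath=(i,i')$ and once with $g=\mathrm{pr}_{M'}$, $\tilde\imath=(i,i')$). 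So from now on assume $g\colon M'\to M$ is smooth with $g\circ i'=i$.

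Next I would build a smooth morphism $Def(g)\colon Def(i')\to Def(i)$. The map $g\times\id_{\A^1}$ sends the ideal of $Z\times 0$ in $M\times\A^1$ into that of $Z\times 0$ in $M'\times\A^1$, so the Rees construction produces $Def(g)$ lying over $g\times\id_{\A^1}$, restricting to $g\times\id_{\A^1\setminus\{0\}}$ on the open part $M'\times(\A^1\setminus\{0\})$ and to the morphism $\mfC(g)\colon\mfC_{Z\subset M'}\to\mfC_{Z\subset M}$ of Lemma~\ref{lem:CanonIso} on special fibres. Working étale‑locally over $M$, where $g$ is a projection $M\times\A^n\to M$, one checks that $Def(g)$ is smooth, that its relative tangent bundle is the pullback of $T_{M'/M}$ along the structure map $Def(i')\to M'$ (extending the identification $T_{\mfC_{Z\subset M'}/\mfC_{Z\subset M}}\cong\sigma_{i'}^*T_{M'/M}$ of Lemma~\ref{lem:CanonIso}), and that $\mfC_{Z\subset M'}$ is the scheme‑theoretic preimage of $\mfC_{Z\subset M}$ under $Def(g)$. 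Hence $Def(g)^*$ carries the localization triangle \eqref{eqn:LocTria} for $\mfC_{Z\subset M}\hookrightarrow Def(i)$ to the one for $\mfC_{Z\subset M'}\hookrightarrow Def(i')$, and the tangent sequence $0\to T_{M'/M}\to T_{M'/B}\to g^*T_{M/B}\to 0$ identifies $Def(g)^!(\Sigma^{p^*p_1^*T_{M/B}}1_{Def(i)})$ with $\Sigma^{p'^*p_1'^*T_{M'/B}}1_{Def(i')}$. Pushing forward by $\pi_{Def(i)!}$ and composing with the co‑unit $e_{Def(g)}$ — that is, with the smooth pushforward $Def(g)_*$ of \eqref{eqn:SmoothPullback} — then yields a morphism of distinguished triangles in $\SH^G(B)$ from the twisted, pushed‑forward localization triangle of $Def(i')$ to that of $Def(i)$.

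On the \emph{cone} terms this morphism should be exactly $\psi_g=\psi_{i,i'}$, by the computation in the proof of Lemma~\ref{lem:CanonIso} (functoriality of smooth pushforward together with the tangent sequence above); on the \emph{open} terms it is the smooth pushforward $\tau_g:=(g\times\id_{\A^1\setminus\{0\}})_*$ of \eqref{eqn:SmoothPullback}. Since $\del_{Z,i}$ is the connecting map of the $Def(i)$‑triangle precomposed with $\alpha_i$, and $\psi_{i,i'}\circ\alpha_{i'}=\alpha_i$, commutativity of the right‑hand square of this morphism of triangles gives $\del_{Z,i}=\tau_g\circ\del_{Z,i'}$.

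It then remains to show $\bar p_2^M\circ\tau_g=\bar p_2^{M'}$. The projection $q_M\colon M\times(\A^1\setminus\{0\})\to\A^1\setminus\{0\}$ is smooth with relative tangent $p_1^*T_{M/B}$, and since $T_{\A^1\setminus\{0\}/B}$ is trivial its smooth pushforward $q_{M*}$ of \eqref{eqn:SmoothPullback} coincides, under the identifications used to define $\del_{Z,i}$, with the map of $\A^1$‑suspension spectra induced by the morphism $q_M$; composing with the quotient $(\A^1\setminus\{0\})_+\to\G_m$ recovers $\bar p_2^M$. Because $q_M\circ(g\times\id_{\A^1\setminus\{0\}})=q_{M'}$, the functoriality of smooth pushforward (the $\theta$‑identity for \eqref{eqn:SmoothPullback}, applied along the pullback to $M'\times(\A^1\setminus\{0\})$ of the tangent sequence above) gives $q_{M*}\circ\tau_g=q_{M'*}$, whence $\bar p_2^M\circ\tau_g=\bar p_2^{M'}$. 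Combined with $\del_{Z,i}=\tau_g\circ\del_{Z,i'}$ this yields $\Sigma_{\G_m}^{-1}\bar p_2^M\circ\del_{Z,i}=\Sigma_{\G_m}^{-1}\bar p_2^{M'}\circ\del_{Z,i'}$, which is the assertion. I expect the main obstacle to be the coherence check in the previous paragraph — identifying the cone component of the morphism of triangles with $\psi_{i,i'}$ — which amounts to fitting together the base‑change isomorphisms for \eqref{eqn:LocTria}, the exchange isomorphisms of Lemma~\ref{lem:BaseChange}, and the smooth‑pushforward construction \eqref{eqn:SmoothPullback}; the geometry of $Def(g)$ and the last compatibility are routine.
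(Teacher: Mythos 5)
Your proposal is correct and follows essentially the same route as the paper: reduce via $(i,i')\colon Z\to M\times_BM'$ to a smooth $g\colon M'\to M$ with $g\circ i'=i$, construct the smooth map $Def(g)\colon Def(i')\to Def(i)$ restricting to $g\times\id$ on the open locus and to $\mfC(g)$ on the special fibre, obtain a map of the twisted, pushed-forward localization triangles whose cone component is $\psi_{i,i'}$, and conclude by compatibility with $\bar p_2$. The only cosmetic difference is your final step, where you derive $\bar p_2^M\circ\tau_g=\bar p_2^{M'}$ via functoriality of smooth pushforward along $q_M\circ(g\times\id)=q_{M'}$, whereas the paper directly identifies the open-term map with $\Sigma_{\G_m}^{-1}(g\times\id)$ and uses $p_2\circ(g\times\id)=p_2$; these amount to the same verification.
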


\begin{proof}  We reduce as in the proof of Lemma~\ref{lem:CanonIso} to the case in which we have a commutative diagram
\[
\xymatrix{
Z\ar[r]^{i'}\ar[rd]_i&M'\ar[d]^g\\
&M}
\]
with $g$ smooth. The map $g$ induces a smooth morphism $Def(g):Def(i')\to Def(i)$,  giving us the commutative diagram
\[
\xymatrix{
Def(i')\ar[r]^{p_{M'}}\ar[d]_{Def(g)}&M'\times\A^1\ar[d]^{g\times\id}\\
Def(i)\ar[r]_{p_M}&M\times\A^1}
\]
The restriction of $Def(g)$ to $\mfC_{i'}$ is the map $\mfC(g):\mfC_{i'}\to \mfC_{i}$  induced by $g$.

This gives us the map of distinguished triangles (we suppress the isomorphisms on the suspension operations induced by various exact sequences)
\[
\xymatrix{
\pi_{M'\times(\A^1\setminus\{0\})!}\circ \Sigma^{p_1^*T_{M'}}(1_{M'\times(\A^1\setminus\{0\})})
\hskip-50pt
\ar[dd]\ar[rd]^{\hskip30pt(g\times\id)_*\circ\Theta_g}\\
&\hskip-50pt\pi_{M\times(\A^1\setminus\{0\})!}\circ\Sigma^{p_1^*T_{M}}(1_{M\times(\A^1\setminus\{0\})})
\ar[dd]\\
\pi_{Def(i')!}\circ \Sigma^{p_{M'}^*p_1^*T_{M'}}(1_{Def(i')})\ar[dd]\ar[rd]^{\hskip30pt
Def(g)_*\circ\Theta_g}\\&
\pi_{Def(i)!}\circ \Sigma^{p_M^*p_1^*T_{M}}(1_{Def(i)})\ar[dd]\\
\mfC_{i'}(\sigma_{i'}^*T_{M'})_\BM \ar[dr]^{\hskip20pt\psi(g)=\mfC(g)_*\circ\Theta_g}\\
&\mfC_{i}(\sigma_i^* T_{M})_\BM
}
\]
which in turn gives the commutative diagram
\[
\xymatrixcolsep{50pt}
\xymatrix{
\Cst_Z\ar[d]_{\alpha_{i'}}\ar[dr]^{\alpha_i}\\
\mfC_{i'}(\sigma_{i'}^*T_{M'})_\BM\ar[r]^{\psi_g}\ar[d]_{\del}&
\mfC_{i}(\sigma_i^* T_{M})_\BM\ar[d]^\del\\
\Sigma_{\G_m}^{-1} M'\times(\A^1\setminus\{0\})_+\ar[r]^{\Sigma_{\G_m}^{-1}g\times\id}\ar[dr]_{\bar{p}_2^{M'}}&
\Sigma_{\G_m}^{-1} M\times(\A^1\setminus\{0\})_+\ar[d]^{\bar{p}_2^M}\\
&\mS_B,
}
\]
completing the proof. 
\end{proof}

This lemma allows us to make the following definition.

\begin{definition}\label{def:FundClass} Let $Z$ be in $\Sch^G/B$. The {\em fundamental class} $[\Cst_Z]\in \mS_B^{0,0}(\Cst_Z)$ is  the element represented by the map
$[\mfC_i]\circ\alpha_i:\Cst_Z\to   \mS_B$
for any choice of closed immersion $i:Z\to M$ in $\Sch^G/B$ with $M$ smooth over $B$.

If $\sE$ is a commutative monoid in $\SH^G(B)$ with unit $\epsilon_\sE:\mS_B\to \sE$, we define the fundamental class $[\Cst_Z]_\sE\in \sE^{0,0}(\Cst_Z)$ by composing $[\Cst_Z]$ with $\epsilon_\sE$.
\end{definition}

\section{The virtual fundamental class for a reduced normalized representative}\label{sec:RedNormVirClass}

Let $\L_{Z/B}$ be the relative cotangent complex on $Z\in\Sch^G/B$ and let $[\phi]:E_\bullet\to \L_{Z/B}$ be a perfect obstruction theory on $Z$. Recall that we use homological notation for complexes.

If we choose  a closed immersion $i:Z\to M$ in $\Sch^G/B$ with $M\in \Sm^G/B$, we  have the explicit model $(\sI_Z/\sI_Z^2\xrightarrow{d}i^*\Omega_{M/B})$ for $\tau_{\le 1}\L_{Z/B}$.  Since $E_\bullet$ is by definition supported in $[0,1]$ and $Z$ satisfies the $G$-resolution property,  we have a global resolution, that is,  we have a two-term complex of locally free sheaves in $\Coh_Z^G$, $F_\bullet:=(F_1\to F_0)$, and a map of complexes
\[
\phi:(F_1\xrightarrow{d_F} F_0)\to (\sI_Z/\sI_Z^2\xrightarrow{d}i^*\Omega_{M/B})
\]
which induces an isomorphism on $h_0$ and a surjection on $h_1$,   
representing $\tau_{\le1}$ applied to $[\phi]:E_\bullet\to  \L_{Z/B}$.  

\begin{definition} We call  a representative $(F_\bullet, \phi)$ of $[\phi]$ as above a {\em normalized} representative if the maps $\phi_0, \phi_1$ are surjective.   If in addition $F_0=i^*\Omega_{M/B}$ and $\phi_0$ is the identity, we call $\phi$ {\em reduced}. 
\end{definition}

Let $i:Z\to M$ be a closed immersion in $\Sch^G/B$ with $M$ smooth over $B$ and let
\[
\phi:(F_1\to F_0=i^*\Omega_{M/B})\to (\sI_Z/\sI_Z^2\to i^*\Omega_{M/B}).
\]
be a reduced normalized representative of a perfect obstruction theory $[\phi]$ on $Z$.  The surjection $\phi_1:F_1\to \sI_Z/\sI_Z^2$ induces a surjection of graded algebras $\Sym^*F_1\to \oplus_n  \sI_Z^n/\sI_Z^{n+1}$, and thereby a closed immersion of cones over $Z$
\[
i_\phi:\mfC_{i}\to F^1:=\V(F_1).
\]
Let $p_{F^1}:F^1\to Z$ be the projection and let $0_{F^1}:Z\to F^1$ be the 0-section.

The map $i_\phi$ induces the proper push-forward
\[
i_{\phi*}:\mS_B^\BM(\mfC_{i}, \sigma_i^*T_M)\to \mS_B^\BM(F^1,p_{F^1}^*i_Z^*T_M)
\]
Noting that  $T_{F^1/Z}=p_{F^1}^*(F^1)$,  the zero section $0_{F^1}$ gives us the Gysin map
\[
0_{F^1}^!:\mS_B^\BM(F^1, p_{F^1}^*i^*T_M)\to \mS_B^\BM(Z, i^*T_M-F^1)
\]

We have the fundamental class $[\mfC_i]\in \mS_B^\BM(\mfC_{i}, \sigma_i^*T_M)$ (Definition~\ref{def:FundClass0}).
\begin{definition} \label{defVFCRN}The {\em virtual fundamental class} for a reduced normalized representative $\phi$ of a perfect obstruction theory $[\phi]:E_\bullet\to \L_{Z/B}$ with respect to a closed immersion $i$ is defined as
\[
[Z,\phi, i]^\vir:=0_{F^1}^!(i_{\phi*}([\mfC_i]))\in \mS^\BM_B(Z, i^*T_M-F^1)=
\mS^\BM_B(Z,\V(E_\bullet))
\]
\end{definition}
The identity $ \mS^\BM_B(Z, i^*T_M-F^1)=
\mS^\BM_B(Z,\V(E_\bullet))$ is really the isomorphism induced by the given isomorphism $(F_1\to F_0)\cong E_\bullet$  in $D^\perf_G(Z)$.

\section{Reduced normalized representatives of a perfect obstruction theory}\label{sec:ObstThy}
We show how a given perfect obstruction theory $[\phi]:E_\bullet\to \L_{Z/B}$
 on some $Z\in \Sch^G/B$ admits a reduced normalized representative ``up to $\A^1$-homotopy equivalence'', in a way which will be clarified in this section. If $Z$ is affine, $[\phi]$ already admits 
a reduced normalized representative. In general we replace $Z$ with a Jouanolou cover $p_Z:\tilde{Z}\to Z$ and we construct an ``induced perfect obstruction theory'' $p_Z^![\phi]:p_Z^!E_\bullet\to \L_{\tilde{Z}/B}$ on $\tilde{Z}$.

 As in the previous section, let $\L_{Z/B}$ be the relative cotangent complex on $Z$ and let $[\phi]:E_\bullet\to \L_{Z/B}$ be a perfect obstruction theory on $Z$. Choose  a closed immersion $i:Z\to M$ in $\Sch^G/B$ with $M\in \Sm^G/B$, a two-term complex of locally free sheaves in $\Coh_Z^G$, $F_\bullet:=(F_1\to F_0)$, and a map of complexes
\[
\phi:(F_1\xrightarrow{d_F} F_0)\to (\sI_Z/\sI_Z^2\xrightarrow{d}i^*\Omega_{M/B})
\]
which induces an isomorphism on $h_0$ and a surjection on $h_1$,   
representing $\tau_{\le1}$ applied to $[\phi]:E_\bullet\to  \L_{Z/B}$.  

By the $G$-resolution property for $Z$, there is a locally free sheaf $\sF$ on $Z$ and a surjection $p:\sF\to \sI_Z/\sI_Z^2$. We may then replace $(F_\bullet,\phi)$ by 
\[
F_1\oplus \sF\xrightarrow{d_F\oplus \id_\sF}F_0\oplus \sF
\]
and map the copy of $\sF$ in degree 0 to $i^*\Omega_{M/B}$ by $d\circ p$, giving the map
\[
\phi':(F_1\oplus \sF\xrightarrow{d_F\oplus \id_\sF}F_0\oplus \sF)\to 
(\sI_Z/\sI_Z^2\xrightarrow{d}i^*\Omega_{M/B}).
\]
The map  $\phi'_1:=\phi+p$ is surjective by construction and the assumption that $h_0(\phi)$ is an isomorphism implies that $\phi'_0$ is surjective as well. Thus, each $[\phi]$ admits a normalized representative.

For a normalized representative $\phi:F_\bullet\to (\sI_Z/\sI_Z^2\xrightarrow{d}i^*\Omega_{M/B})$, we let $K_i\subset F_i$ be the kernel of $\phi_i$. We let $F^i\to Z$ be the   vector bundle 
$\V(F_i):=\Spec_{\sO_Z}\Sym^* F_i$ and similarly define $K^i:=\V(K_i)$. 

\begin{lemma} \label{lem:ExactSeq} Let $\phi:F_\bullet\to (\sI_Z/\sI_Z^2\xrightarrow{d}i^*\Omega_{M/B})$ be a normalized representative of a perfect obstruction theory $[\phi]$. Let $K(h_1(F_\bullet))$ be the kernel of the surjection $h_1(\phi):h_1(F_\bullet)\to h_1(\L_{Z/B})$. Then $K_0$ is locally free and  in the commutative diagram
\[
\xymatrix{
&0\ar[d]&0\ar[d]&0\ar[d]\\
0\ar[r]&K(h_1(F_\bullet))\ar[r]\ar[d]&K_1\ar[r]\ar[d]&K_0\ar[r]\ar[d]&0\ar[d]\\
0\ar[r]&h_1(F_\bullet)\ar[r]\ar[d]_{h_1(\phi)}&F_1\ar[r]^{d_F}\ar[d]_{\phi_1}&F_0\ar[d]_{\phi_0}\ar[r]&h_0(F_\bullet)\ar[d]_{h_0(\phi)}\ar[r]&0\\
0\ar[r]&h_1(\L_{Z/B})\ar[r]\ar[d]&\sI_Z/\sI_Z^2\ar[r]_d\ar[d]&i^*\Omega_{M/B}\ar[r]\ar[d]&h_0(\L_{Z/B})\ar[r]\ar[d]&0\\
&0&0&0&0
}
\]
all the rows and columns are exact.
\end{lemma}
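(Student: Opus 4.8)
The plan is to recognize the whole array as the diagram attached to a short exact sequence of two-term complexes, and to extract every exactness statement from the associated homology long exact sequence.

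First, $K_0$ is locally free: $F_0$ is locally free by hypothesis, and since $M/B$ is smooth, $\Om_{M/B}$, hence its pullback $i^*\Om_{M/B}$, is locally free of finite rank; as $\phi$ is normalized, $\phi_0\colon F_0\to i^*\Om_{M/B}$ is an epimorphism of locally free coherent $G$-sheaves, so it splits locally, whence $K_0=\ker\phi_0$ is locally a direct summand of the locally free sheaf $F_0$ and is therefore locally free of finite rank.

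Next, write $L_\bullet:=(\sI_Z/\sI_Z^2\xrightarrow{d}i^*\Om_{M/B})$ for the chosen model of $\tau_{\ge1}\L_{Z/B}$, so that $\phi$ is a morphism of two-term complexes $F_\bullet\to L_\bullet$. Since $\phi_1(x)=0$ forces $\phi_0(d_Fx)=d(\phi_1x)=0$, the differential $d_F$ carries $K_1=\ker\phi_1$ into $K_0=\ker\phi_0$; thus $K_\bullet:=(K_1\xrightarrow{d_F}K_0)$ is the kernel complex of $\phi$, and
\[
0\to K_\bullet\to F_\bullet\xrightarrow{\phi}L_\bullet\to0
\]
is an exact sequence of two-term complexes (exact in each degree, since $\phi_0,\phi_1$ are surjective). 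The two middle columns of the diagram in the statement are exactly the degree-$0$ and degree-$1$ parts of this sequence; the leftmost column is short exact because $K(h_1(F_\bullet))$ is by construction $\ker h_1(\phi)$ and $h_1(\phi)$ is surjective by the perfect-obstruction-theory hypothesis, and the rightmost column is short exact because $h_0(\phi)$ is an isomorphism. Rows $3$ and $4$ are the tautological four-term exact sequences relating the two-term complexes $F_\bullet$ and $L_\bullet$ to their homology sheaves $h_1$ and $h_0$, hence are exact by definition (for row $4$, with $h_i(\L_{Z/B})$ read off the model $L_\bullet$).

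Only row $2$ remains. The same tautological four-term sequence for the complex $K_\bullet$ is $0\to h_1(K_\bullet)\to K_1\xrightarrow{d_F}K_0\to h_0(K_\bullet)\to0$ and is exact, so it suffices to identify $h_1(K_\bullet)=K(h_1(F_\bullet))$ and $h_0(K_\bullet)=0$. Both come from the homology long exact sequence of the displayed short exact sequence of complexes,
\[
0\to h_1(K_\bullet)\to h_1(F_\bullet)\xrightarrow{h_1(\phi)}h_1(\L_{Z/B})\xrightarrow{\partial}h_0(K_\bullet)\to h_0(F_\bullet)\xrightarrow{h_0(\phi)}h_0(\L_{Z/B})\to0.
\]
Since $h_1(\phi)$ is surjective, $\partial=0$; hence $h_1(K_\bullet)=\ker h_1(\phi)=K(h_1(F_\bullet))$, and $h_0(K_\bullet)$ maps injectively to $h_0(F_\bullet)$. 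But that map is also zero, because $h_0(\phi)$ is injective while the composite $h_0(K_\bullet)\to h_0(F_\bullet)\to h_0(\L_{Z/B})$ vanishes; therefore $h_0(K_\bullet)=0$ and row $2$ becomes $0\to K(h_1(F_\bullet))\to K_1\to K_0\to0$. Finally, a routine check — using the commutativity of the squares already present — identifies the edge maps $K(h_1(F_\bullet))\hookrightarrow K_1$ and $K_1\to K_0$ produced this way with the restrictions of $h_1(F_\bullet)\hookrightarrow F_1$ and of $d_F$, so that the array is exactly the one in the statement and all of its rows and columns are exact. There is no genuinely hard step here; the only thing requiring care is the bookkeeping — checking that $d_F$ restricts to $K_\bullet$ and that the connecting-homomorphism analysis yields the edge maps of row $2$ in their stated form, the rest being the nine-lemma/long-exact-sequence formalism.
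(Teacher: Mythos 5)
Your proof is correct and is essentially the paper's argument: the homology long exact sequence of the short exact sequence of two-term complexes $0\to K_\bullet\to F_\bullet\to L_\bullet\to 0$ is precisely the snake lemma the paper invokes, combined with the surjectivity of $\phi_0,\phi_1,h_1(\phi)$ and the fact that $h_0(\phi)$ is an isomorphism. Your explicit local-splitting argument for the local freeness of $K_0$ is also the standard justification implicit in the paper.
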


\begin{proof} Our assumption that $\phi$ is a normalized representative is just that $\phi_0, \phi_1$ and $h_1(\phi)$ are surjective, and $h_0(\phi)$ is an isomorphism. The rest follows by the snake lemma.
\end{proof}

\begin{lemma}\label{lem:ReducedNorm}
Suppose $Z$ is affine, choose a closed immersion $i:Z\to M$ in $\Sch^G/B$ with $M\in \Sm^G/B$ and let $[\phi]$ be a perfect obstruction theory on $Z$. Then  $[\phi]$ admits a reduced normalized representative. 
\end{lemma}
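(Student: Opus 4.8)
The plan is to start from a \emph{normalized} representative
\[
\phi\colon (F_1\xr{d_F}F_0)\longrightarrow(\sI_Z/\sI_Z^2\xr{d}i^*\Om_{M/B})
\]
of $[\phi]$ --- which exists by the discussion preceding Lemma~\ref{lem:ExactSeq} --- and to peel off a contractible direct summand of complexes so that $F_0$ is replaced by $i^*\Om_{M/B}$ and $\phi_0$ by the identity. Every splitting used below is available $G$-equivariantly, because $Z$ is affine, so that each locally free coherent $G$-sheaf on $Z$ is projective in $\QCoh^G_Z$.

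First, since $\phi_0$ is surjective and $i^*\Om_{M/B}$ is locally free, I would pick a $G$-equivariant section of $\phi_0$ and thereby write $F_0=i^*\Om_{M/B}\oplus K_0$ with $K_0=\ker\phi_0$ (locally free, being the kernel of a surjection of locally free sheaves, cf.\ Lemma~\ref{lem:ExactSeq}) and $\phi_0=\mathrm{pr}_{\Om}$. Next, by Lemma~\ref{lem:ExactSeq} the map $\bar d\colon K_1:=\ker\phi_1\to K_0$ induced by $d_F$ --- well defined because the chain-map identity forces $d_F(K_1)\subseteq\ker\phi_0=K_0$ --- is surjective, so I would choose a $G$-equivariant section $\sigma\colon K_0\to K_1\subseteq F_1$ of $\bar d$.

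The crucial point is that $d_F$ carries $\sigma(K_0)$ isomorphically onto the summand $K_0\subseteq F_0$, with vanishing component in $i^*\Om_{M/B}$: for $u\in K_0$ one has $\sigma(u)\in\ker\phi_1$, hence $\phi_0(d_F(\sigma(u)))=d(\phi_1(\sigma(u)))=0$, i.e.\ $d_F(\sigma(u))$ lies in $\ker\phi_0=K_0$, while its $K_0$-component is $\bar d(\sigma(u))=u$. Therefore $\mathrm{pr}_{K_0}\circ d_F\colon F_1\to K_0$ is split surjective, with locally free kernel $F_1''$, giving $F_1=F_1''\oplus\sigma(K_0)$; and in the resulting decompositions $F_1=F_1''\oplus\sigma(K_0)$, $F_0=i^*\Om_{M/B}\oplus K_0$ the differential $d_F$ is ``block diagonal'', equal to $d_F^{\Om}:=\mathrm{pr}_{\Om}\circ d_F|_{F_1''}\colon F_1''\to i^*\Om_{M/B}$ on the first factor and to an isomorphism $\sigma(K_0)\xr{\ \sim\ }K_0$ on the second. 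In other words
\[
F_\bullet \;=\; \bigl(F_1''\xr{d_F^{\Om}}i^*\Om_{M/B}\bigr)\ \oplus\ \bigl(\sigma(K_0)\xr{\ \sim\ }K_0\bigr),
\]
the second summand being contractible.

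Finally I would observe that $\phi$ respects this decomposition: $\phi_0=\mathrm{pr}_{\Om}$ annihilates $K_0$, and $\phi_1$ annihilates $\sigma(K_0)\subseteq K_1=\ker\phi_1$. Hence $\phi=\phi'\oplus 0$ where $\phi'\colon(F_1''\xr{d_F^{\Om}}i^*\Om_{M/B})\to(\sI_Z/\sI_Z^2\xr{d}i^*\Om_{M/B})$ is given by $\phi_0'=\mathrm{id}$ and $\phi_1'=\phi_1|_{F_1''}$, the chain-map identity for $\phi'$ being the restriction to $F_1''$ of that for $\phi$. Discarding the contractible summand $(\sigma(K_0)\xr{\ \sim\ }K_0)$ shows that $(F_\bullet',\phi')$ still represents $[\phi]$ in $D^{\perf}_G(Z)$; by construction it is reduced, and it is normalized because $\phi_0'=\mathrm{id}$ is surjective and $\phi_1'=\phi_1|_{F_1''}$ is still surjective ($\phi_1$ being surjective and vanishing on $\sigma(K_0)$). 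The one genuinely substantive step is the ``vanishing $i^*\Om_{M/B}$-component'' claim for $d_F\circ\sigma$: it is exactly what makes $\sigma(K_0)\to K_0$ a bona fide direct summand of complexes preserved by $\phi$, so that the naive temptation to simply ``project $F_0$ onto $i^*\Om_{M/B}$'' --- which would enlarge $h_1$ --- is avoided. Everything else is bookkeeping with the splittings provided by the affineness of $Z$ and with the exact diagram of Lemma~\ref{lem:ExactSeq}.
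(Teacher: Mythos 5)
Your proof is correct and takes essentially the same approach as the paper: starting from a normalized representative, you use affineness of $Z$ (projectivity of locally free $G$-sheaves) to split the surjection $K_1\to K_0$ from Lemma~\ref{lem:ExactSeq} and then eliminate the resulting contractible subcomplex $K_0\xrightarrow{\ \sim\ }K_0$. The only cosmetic difference is that you additionally split $\phi_0$ so as to discard that piece as a direct summand of complexes, whereas the paper simply passes to the quotient complex $F_1/i_1(K_0)\to F_0/K_0\cong i^*\Omega_{M/B}$.
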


\begin{proof} We have already seen that  $[\phi]$ admits a  normalized representative 
\[
\phi:(F_1\to F_0)\to (\sI_Z/\sI_Z^2\to i^*\Omega_{M/B}).
\]

We use the notation of Lemma~\ref{lem:ExactSeq}. Since $Z$ is affine, each locally free  $\sF$ in $\QCoh_Z^G$ is a projective object in $\QCoh_Z^G$. Thus, we may choose a splitting $s_K:K_0\to K_1$ to the surjection $K_1\to K_0$. This gives us the commutative diagram
\[
\xymatrix{
K_0\ar@{=}[d]\ar@/^20pt/[rr]^{i_1}\ar[r]^{s_K}&K_1\ar[r]\ar[d]&F_1\ar[d]^{d_F}\ar[r]^{\phi_1}&\sI_Z/\sI^2_Z\ar[d]^d\\
K_0\ar@{=}[r]&K_0\ar[r]&F_0\ar[r]_{\phi_0}&i^*\Omega_{M/B}
}
\]
Replacing $F_1$ with $F_1':=F_1/i_1(K_0)$ and $F_0$ with $i^*\Omega_{M/B}\cong F_0/K_0$,  we have the reduced normalized representative 
\[
(F'_1\xrightarrow{d_{F'}}i^*\Omega_{M/B})\xrightarrow{(\phi'_1,\id)} (\sI_Z/\sI_Z^2\to i^*\Omega_{M/B})
\]
for $[\phi]$. 
\end{proof}

\begin{lemma}\label{lem:RedNormIso} Suppose $Z$ is affine and choose a closed immersion $i:Z\to M$ in $\Sch^G/B$ with $M\in \Sm^G/B$. If $\phi:F_\bullet\to (\sI_Z/\sI_Z^2\to i^*\Omega_{M/B})$ and 
$\phi':F'_\bullet\to (\sI_Z/\sI_Z^2\to i^*\Omega_{M/B})$ are two reduced normalized  representatives of a given perfect obstruction theory $[\phi]:E_\bullet\to \L_{Z/B}$ on $Z$, then the induced isomorphism $F_\bullet\cong E_\bullet\cong F'_\bullet$ in $D^\perf_G(Z)$ arises from an isomorphism of complexes $\rho_\bullet:F_\bullet\to F'_\bullet$ making the diagram
\[
\xymatrix{
F_\bullet\ar[r]^{\rho_\bullet}\ar[dr]&F'_\bullet\ar[d]\\
&(\sI_Z/\sI_Z^2\to i^*\Omega_{M/B})
}
\]
commute. Moreover $\rho_\bullet$ is unique up to chain homotopy of the form $h:i^*\Omega_{M/B}\to h_1(F'_\bullet)\subset F_1'$ satisfying $d_{F'_\bullet}h=0=\phi_1'hd_{F_\bullet}$.
\end{lemma}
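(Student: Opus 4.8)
The plan is to compare the two reduced normalized representatives via the standard theory of resolutions of complexes by complexes of projectives, exploiting that $Z$ is affine (so locally free sheaves in $\Coh_Z^G$ are projective objects of $\QCoh_Z^G$ by \cite[Lemma 2.17]{Hoyois6}). Both $F_\bullet$ and $F'_\bullet$ are two-term complexes of projective objects concentrated in degrees $[0,1]$, and both come equipped with quasi-isomorphisms (in $D^\perf_G(Z)$) to the same object $E_\bullet$, realized concretely as maps of complexes $\phi:F_\bullet\to (\sI_Z/\sI_Z^2\to i^*\Omega_{M/B})$ and $\phi':F'_\bullet\to (\sI_Z/\sI_Z^2\to i^*\Omega_{M/B})$ (using that the displayed two-term complex is a model for $\tau_{\ge 1}\L_{Z/B}$, and that $h_0$ and $h_1$ are exactly the cohomology sheaves the obstruction-theory condition controls). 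Since $\phi$ and $\phi'$ induce the same map on $h_0$ (an isomorphism) and compatible surjections on $h_1$, the two complexes are linked by a zigzag of quasi-isomorphisms over $(\sI_Z/\sI_Z^2\to i^*\Omega_{M/B})$; I want to straighten this zigzag into a single genuine chain map $\rho_\bullet:F_\bullet\to F'_\bullet$ over the target, unique up to chain homotopy.

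First I would invoke the comparison lemma for projective resolutions in the derived category: given a quasi-isomorphism $\phi':F'_\bullet\xrightarrow{\sim}(\sI_Z/\sI_Z^2\to i^*\Omega_{M/B})$ with $F'_\bullet$ a complex of projectives bounded below, and any chain map $\phi:F_\bullet\to(\sI_Z/\sI_Z^2\to i^*\Omega_{M/B})$ from a bounded-below complex (here also of projectives), there exists a chain map $\rho_\bullet:F_\bullet\to F'_\bullet$ with $\phi'\circ\rho_\bullet$ chain-homotopic to $\phi$, and $\rho_\bullet$ is unique up to chain homotopy. In the reduced situation both complexes have $F_0=F'_0=i^*\Omega_{M/B}$ with $\phi_0=\phi'_0=\id$, and the degree-$0$ component of the constructed $\rho_\bullet$ can be taken to be the identity: indeed $\rho_0$ must commute with the identity maps down to $i^*\Omega_{M/B}$ up to the homotopy, and since everything is concentrated in degrees $0,1$, one can absorb the degree-$1$ part of the homotopy and arrange $\phi'\circ\rho_\bullet=\phi$ on the nose, not just up to homotopy — this is where I would do a short explicit diagram chase using surjectivity of $\phi_1,\phi'_1$ and the snake-lemma picture of Lemma~\ref{lem:ExactSeq}. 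Running the same argument with the roles of $F_\bullet$ and $F'_\bullet$ exchanged produces $\rho'_\bullet:F'_\bullet\to F_\bullet$, and the uniqueness-up-to-homotopy clause shows $\rho'_\bullet\circ\rho_\bullet\simeq\id_{F_\bullet}$ and $\rho_\bullet\circ\rho'_\bullet\simeq\id_{F'_\bullet}$; since a chain map homotopic to the identity of a two-term complex of projectives that is moreover compatible with the structure map to a fixed complex is in fact an isomorphism (its components are the identity plus a nilpotent-off-diagonal correction in this degree range), $\rho_\bullet$ is an honest isomorphism of complexes. Finally, that the resulting $F_\bullet\cong E_\bullet\cong F'_\bullet$ in $D^\perf_G(Z)$ agrees with $\rho_\bullet$ is immediate from the construction, since $\rho_\bullet$ was built precisely to be compatible with the quasi-isomorphisms $\phi,\phi'$ to the model of $\tau_{\ge 1}\L_{Z/B}$.

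The main obstacle I anticipate is the passage from ``commutes up to chain homotopy'' to ``commutes strictly'': the abstract resolution lemma only gives $\phi'\circ\rho_\bullet\simeq\phi$, whereas the statement asserts a strictly commuting triangle (with uniqueness only up to homotopy). To upgrade this I would use the reducedness normalization — $\phi_0=\phi'_0=\id_{i^*\Omega_{M/B}}$ and $\phi_1,\phi'_1$ surjective — together with projectivity of $K_1,K'_1$ (the kernels, which are locally free by Lemma~\ref{lem:ExactSeq}) to lift the degree-$1$ component of the homotopy and correct $\rho_1$; because the complexes live only in degrees $0$ and $1$ there are no higher obstructions, so this correction is a single step. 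A secondary technical point is making sure everything is $G$-equivariant throughout, but this is automatic since all the sheaves, maps, and splittings are taken in $\QCoh_Z^G$ and the projectivity statement \cite[Lemma 2.17]{Hoyois6} is equivariant.
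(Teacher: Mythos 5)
Your central step does not apply as stated: you invoke the comparison lemma for projective resolutions against the map $\phi':F'_\bullet\to(\sI_Z/\sI_Z^2\to i^*\Omega_{M/B})$, calling it a quasi-isomorphism, but a reduced normalized representative of a perfect obstruction theory is \emph{not} a quasi-isomorphism onto this model of $\tau_{\ge1}\L_{Z/B}$ --- by definition it only induces an isomorphism on $h_0$ and a surjection on $h_1$, and the kernel $K(h_1(F'_\bullet))$ of Lemma~\ref{lem:ExactSeq} is nonzero in general (this is precisely the situation in which the virtual class differs from the ordinary one). With that hypothesis gone, both of your quoted conclusions fail: a chain map $\rho_\bullet$ with $\phi'\circ\rho_\bullet\simeq\phi$ is not unique up to homotopy (one may add to $\rho_1$ any map $F_1\to\ker\phi'_1\cap\ker d_{F'}$ without disturbing the triangle, even strictly), and, more seriously, such a lift --- even when it exists --- need not realize the given isomorphism $F_\bullet\cong E_\bullet\cong F'_\bullet$ in $D^\perf_G(Z)$, which is the actual content of the lemma. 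Your final sentence, that compatibility with $E_\bullet$ is ``immediate from the construction,'' is exactly where this is lost: $E_\bullet$ never enters your lifting argument, whereas the homotopy class of $\rho_\bullet$ is pinned down only by the identifications $\alpha:F_\bullet\to E_\bullet$, $\alpha':F'_\bullet\to E_\bullet$, not by the maps to $\tau_{\ge1}\L_{Z/B}$ alone.

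The argument is repairable by lifting against the genuine quasi-isomorphism $\alpha'$ instead of against $\phi'$: since $Z$ is affine, $F_\bullet$ is a bounded complex of projective objects of $\QCoh^G_Z$, hence composition with $\alpha'$ identifies homotopy classes of chain maps $F_\bullet\to F'_\bullet$ with morphisms $F_\bullet\to E_\bullet$ in $D^\perf_G(Z)$; this produces $\rho_\bullet$ with $\alpha'\rho_\bullet\simeq\alpha$, unique up to homotopy and inducing the given identification, and composing with $E_\bullet\to\tau_{\ge1}\L_{Z/B}$ gives $\phi'\rho_\bullet\simeq\phi$. From there your two remaining steps are sound and worth keeping: surjectivity of $\phi'_1$ plus projectivity of $F_0$ lets you choose $s:F_0\to F'_1$ with $\phi'_1s=-h$ and make the triangle commute strictly, and reducedness then forces $\rho_0=\id$, so the homotopies witnessing $\rho'\rho\simeq\id$ and $\rho\rho'\simeq\id$ satisfy $d_Ft=0$, making the degree-one composites unipotent and $\rho_\bullet$ an isomorphism of complexes. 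Note that even repaired, this is a different route from the paper, which argues more explicitly: it forms the pullback $F''_1=E_1\times_{E_0}(F_0\oplus F'_0)$, embeds both $F_\bullet$ and $F'_\bullet$ into $(F''_1\to F_0\oplus F'_0)$, and quotients by the acyclic complex $K\xrightarrow{\id}K$ (with $K\cong i^*\Omega_{M/B}$ the antidiagonal) so that both induced maps to the quotient are isomorphisms; uniqueness up to homotopy is then deduced, as in your repaired version, from projectivity of the terms of $F_\bullet$.
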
 

\begin{proof} We denote the homology of the complex $(\sI_Z/\sI_Z^2\to i^*\Omega_{M/B})$ by $h_1, h_0$.  As we are given an isomorphism  $F_\bullet\cong  F'_\bullet$ in $D^\perf_G(Z)$, we have isomorphisms $\xi_i:h_i(F_\bullet)\cong h_i(F_\bullet')$ in $\Coh_Z^G$  such that the induced map on the Ext groups
\[
\Ext_{\Coh_Z^G}^2(h_0(F_\bullet), h_1(F_\bullet))\to
\Ext_{\Coh_Z^G}^2(h_0(F'_\bullet), h_1(F'_\bullet))
\]
sends the class of $0\to h_1(F_\bullet)\to F_1\to F_0\to h_0(F_\bullet)\to 0$ to the class of 
$0\to h_1(F'_\bullet)\to F'_1\to F'_0\to h_0(F'_\bullet)\to 0$. 

Let $M_0\subset i^*\Omega_{M/B}$ the image subsheaf $d(\sI_Z/\sI_Z^2)$. The maps
$\phi$, $\phi'$ give rise to exact sequences
\begin{equation}\label{eqn:Ext1}
0\to h_1(F_\bullet)\to F_1\xrightarrow{d_F}M_0\to 0,
\end{equation}
\begin{equation}\label{eqn:Ext1'}
0\to h_1(F'_\bullet)\to F_1'\xrightarrow{d_{F'}}M_0\to 0
\end{equation}
and
\begin{equation}\label{eqn:Ext1''}
0\to M_0\to i^*\Omega_{M/B}\to h_0\to 0
\end{equation}
Thus the $\Ext^2$ classes considered above arise from  the $\Ext^1$-classes of \eqref{eqn:Ext1} and \eqref{eqn:Ext1'} respectively by taking the boundary in the long exact $\Ext$ sequence associated to \eqref{eqn:Ext1''}. Since $Z$ is affine, $\Ext^1_{\Coh_Z^G}(i^*\Omega_{M/B}, -)=0$, so the isomorphism $h_1(F_\bullet)\cong h_1(F'_\bullet)$ sends the $\Ext^1$-class of \eqref{eqn:Ext1} to that of \eqref{eqn:Ext1'} giving us the isomorphism $\rho_1:F_1\to F_1'$ making the diagram 
\[
\xymatrix{
h_1(F_\bullet)\ar[r]\ar[d]_{\xi_1}&F_1\ar[d]_{\rho_1}\ar[r]&M_0\ar@{=}[d]\\
h_1(F'_\bullet)\ar[r]&F_1'\ar[r]&M_0
}
\]
commute. This gives us the isomorphism $\rho:=(\rho_1,\id):F_\bullet\to F_\bullet'$ of perfect complexes. 

Since $Z$ is affine and $G$ is tame, $\Coh_Z^G$ has enough projectives, so   $D^-(\Coh_Z^G)$ is equivalent to the bounded-above homotopy category of the full subcategory of projective objects in $\Coh_Z^G$; the projective objects are the $\sF\in  \Coh_Z^G$ which are locally free as $\sO_Z$-modules. 
 
Consider the two morphisms $\phi, \phi'\circ\rho:F_\bullet\to (\sI_Z/\sI_Z^2\to i^*\Omega_{M/B})$. We have $[\phi]=[\phi'\circ\rho]$ as maps in $D^b(\Coh_Z^G)$, and as $F_\bullet$ is a perfect complex, the maps $\phi, \phi'\circ\rho$ are chain homotopic. Since $\phi'_1$ is surjective, for $h:F_0\to \sI_Z/\sI_Z^2$ a chain homotopy, there is a $\tilde{h}:F_0\to F'_1$ with $
\phi'_1\circ\tilde{h}=h$. Since $\phi_0=\id_{i^*\Omega_{M/B}}$, we have 
\[
0=d\circ h=d\circ\phi'_1\circ\tilde{h}=\id_{i^*\Omega_{M/B}}\circ d_{F'_\bullet}\circ\tilde{h}
\]
so we may consider $\tilde{h}$ as a map $\tilde{h}:i^*\Omega_{M/B}\to h_1(F'_\bullet)$. Replacing $\rho$ with $\rho':=\rho-d\tilde{h}$, we see that $\rho':F_\bullet\to F'_\bullet$ satisfies $\phi'\circ\rho'=\phi$ as maps of complexes. Changing notation, we may assume that $\rho$ is an isomorphism satisfying $\phi'\circ\rho=\phi$.

Given two isomorphisms $\rho_1, \rho_2:F_\bullet\to F'_\bullet$  over $\L_{Z/B}$, both representing the same morphism in  $D^\perf_G(Z)$, then $\tau_{\le1}\rho_1$ and $\tau_{\le1}\rho_2$ are chain homotopic by a map $h:F_0=i^*\Omega_{M/B}\to F_1'$. The fact that both $\tau_{\le1}\rho_i$ are isomorphisms over $(\sI_Z/\sI_Z^2\to i^*\Omega_{M/B})$ implies the identities $d_{F'_\bullet}h=0=\phi_1'hd_{F_\bullet}$, the first of which shows that $h(F_0)\subset h_1(F'_\bullet)$.  
\end{proof}

We return to case of  arbitrary $Z\in \Sch^G/B$ with a closed immersion $i_Z:Z\to M$ in $\Sch^G/B$,  $M\in \Sm^G/B$. Suppose we have a smooth morphism $p_M:\tilde{M}\to M$. Form the cartesian square
\[
\xymatrix{
\tilde{Z}\ar[r]^{i_{\tilde{Z}}}\ar[d]_{p_Z}&\tilde{M}\ar[d]^{p_M}\\
Z\ar[r]_{i_Z}&M
}
\]
Since $p_Z$ is smooth, we have the distinguished triangle in $D^\perf_G(\tilde{Z})$
\begin{equation}\label{eqn:InducedDistTriangle1}
p_Z^*\L_{Z/B}\xrightarrow{\L(p_Z)} \L_{\tilde{Z}/B}\to \Omega_{\tilde{Z}/Z}\to 
p_Z^*\L_{Z/B}[1]
\end{equation}
and the isomorphism of locally free sheaves on $\tilde{Z}$, $\Omega_{\tilde{Z}/Z}\cong i_{\tilde{Z}}^*\Omega_{\tilde{M}/M}$.  Applying the truncation functor $h_0$ gives us the exact sequence of sheaves
\[
0\to p_Z^*\Omega_{Z/B}\to \Omega_{\tilde{Z}/B}\xrightarrow{\pi} \Omega_{\tilde{Z}/Z}\to0
\]

Given a perfect obstruction theory $[\phi]:E_\bullet\to  \L_{Z/B}$, we say that a perfect obstruction theory $[\tilde{\phi}]:\tilde{E}_\bullet\to \L_{\tilde{Z}/B}$ is {\em induced from } $[\phi]$ if \\[5pt]
i.   $\tilde{E}_\bullet=p_Z^*E_\bullet\oplus \Omega_{\tilde{Z}/Z}$\\[2pt]
ii. $[\tilde{\phi}]$ fits into a commutative diagram
\[
\xymatrix{
p_Z^*E_\bullet \ar[r]^i\ar[d]_{p_Z^*[\phi]}&\tilde{E}_\bullet\ar[d]^{[\tilde{\phi}]}\ar[r]^p&\Omega_{\tilde{Z}/Z}\ar@{=}[d]\\
p_Z^*\L_{Z/B}\ar[r]^{\L(p_Z)}& \L_{\tilde{Z}/B}\ar[r]& \Omega_{\tilde{Z}/Z}
}
\]
with  $i$ and $p$ the canonical inclusion and projection. 

Since $h_0([\phi]):h_0(E_\bullet)\to \Omega_{Z/B}$ and $h_0([\tilde{\phi}]):h_0(\tilde{E}_\bullet)\to \Omega_{\tilde{Z}/B}$ are both isomorphisms, a necessary condition for an induced obstruction theory to exist is that the surjection $\Omega_{\tilde{Z}/B}\to \Omega_{\tilde{Z}/Z}$ should split.

\begin{lemma}\label{lem:InducedObstThy} Suppose $\tilde{Z}$ is affine. Then for each  perfect obstruction theory $[\phi]$ on $Z$, there exists an induced obstruction theory $[\tilde{\phi}]:\tilde{E}_\bullet\to \L_{\tilde{Z}/B}$ on $\tilde{Z}$. 

Moreover,  $[\tilde{\phi}]$ is unique up to canonical isomorphism $\alpha:(\tilde{E},[\tilde{\phi}])\xrightarrow{\sim}(\tilde{E},[\tilde{\phi}'])$ of obstruction theories.
\end{lemma}

\begin{proof}  
Since $\tilde{Z}=\Spec A$ is affine and $G$ is tame, $\Hom_{D^\perf_G(\tilde{Z})}(\Omega_{\tilde{Z}/Z}, p_Z^*\L_{Z/B}[1])=0$. This gives us a splitting $\tilde{s}$ of the distinguished triangle
\[
p_Z^*\L_{Z/B}\to \L_{\tilde{Z}/B}\xrightarrow{\pi} \Omega_{\tilde{Z}/Z}\to p_Z^*\L_{Z/B}[1]
\]
and thereby an isomorphism $\L_{\tilde{Z}/B}\cong p_Z^*\L_{Z/B} \oplus \Omega_{\tilde{Z}/Z}$ in $D^\perf_G(\tilde{Z})$. From this we see that each perfect obstruction theory $[\phi]:E_\bullet\to 
\L_{Z/B}$ admits an induced perfect obstruction theory of the form
\[
[\tilde\phi_{\tilde{s}}]:=[\phi]\oplus\id_{\Omega_{\tilde{Z}/Z}}:p_Z^*E_\bullet \oplus \Omega_{\tilde{Z}/Z}\to p_Z^*\L_{Z/B} \oplus \Omega_{\tilde{Z}/Z}\cong \L_{\tilde{Z}/B} 
\]
with the isomorphism depending on the choice of splitting $\tilde{s}$ of the distinguished triangle

We have the distinguished triangle
\[
\tau_{\ge1}\L_{\tilde{Z}/B}\to \L_{\tilde{Z}/B}\to h_0\L_{\tilde{Z}/B}\to (\tau_{\ge1}\L_{\tilde{Z}/B})[1]
\]
Since $\tilde{Z}$ is affine,  $\Omega_{\tilde{Z}/Z}$ is locally free and $h_n\tau_{\ge1}\L_{\tilde{Z}/B}=0$ for $n\le0$ and for $n>>0$, we have $\Hom_{D^\perf_G(\tilde{Z})}(\Omega_{\tilde{Z}/Z}, \tau_{\ge1}\L_{\tilde{Z}/B}[i])=0$ for all $i\ge0$, so each map of sheaves
\[
s:\Omega_{\tilde{Z}/Z}\to h_0\L_{\tilde{Z}/B}=\Omega_{\tilde{Z}/B}
\]
lifts uniquely to a map $\tilde{s}:\Omega_{\tilde{Z}/Z}\to \L_{\tilde{Z}/B}$ in $D^\perf_G(\tilde{Z})$. This shows that  splittings of the map  $\pi$ are in bijection with splittings of the surjection $\Omega_{\tilde{Z}/B}\to \Omega_{\tilde{Z}/Z}$.

Since $[\phi]$ is a perfect obstruction theory, we also have $h_0(p_Z^*E_\bullet)=p_Z^*\Omega_{Z/B}$, and the same argument as above show that a second splitting $s'$ of $\Omega_{\tilde{Z}/B}\to \Omega_{\tilde{Z}/Z}$ determines a map $\lambda:\Omega_{\tilde{Z}/Z}\to p_Z^*E_\bullet$.   The map $\alpha:p_Z^*E_\bullet \oplus \Omega_{\tilde{Z}/Z}\to
p_Z^*E_\bullet \oplus \Omega_{\tilde{Z}/Z}$ with matrix
\[
\alpha=\begin{pmatrix}\id&\lambda\\0&\id\end{pmatrix}
\]
thus defines an automorphism of $p_Z^*E_\bullet \oplus \Omega_{\tilde{Z}/Z}$ intertwining $[\tilde{\phi}_{\tilde{s}}]$ and $[\tilde{\phi}_{\tilde{s}'}]$.
\end{proof}

Assuming that $\tilde{Z}$ is affine as above, we write $p_Z^![\phi]$ for the (unique up to canonical isomorphism) perfect obstruction theory induced by a given perfect obstruction theory  $[\phi]$.

 \begin{remark}[Jouanolou covers]\label{rem:Jouanolou} Hoyois \cite[Proposition 2.20]{Hoyois6} has shown that the Jouanolou trick extends to the equivariant case: for each $M\in \Sch^G/B$, there is an affine space bundle $\tilde{M}\to M$ in $\Sch^G/B$ such that $\pi_{\tilde{M}}:\tilde{M}\to B$ is an affine morphism. We call such a map $\tilde{M}\to M$ a {\em Jouanolou cover} of $M$. For affine $B$,  Lemma~\ref{lem:HomotopyInv} will thus enable us to reduce various constructions to the case of affine $Z\in\Sch^G/B$.
\end{remark}

We now assume that $B$ is affine. Using Remark~\ref{rem:Jouanolou},  for each $Z\in \Sch^G/B$ and each closed immersion $Z\to M$ with $M\in \Sm^G/B$, there is an affine space bundle $\tilde{M}\to M$ with $\tilde{M}$ affine, and thus $\tilde{Z}:=Z\times_M\tilde{M}$ is affine as well.

 \begin{lemma}\label{lem:Discussion} Suppose $B$ is affine and  we have a closed immersion $i_Z:Z\to M$ with $M$ in $\Sm^G/B$. Let $\tilde{M}\to M$ be a Jouanolou cover of $M$ and form the cartesian square
\[
\xymatrix{
\tilde{Z}\ar[r]^{i_{\tilde{Z}}}\ar[d]_{p_Z}&\tilde{M}\ar[d]^{p_M}\\
Z\ar[r]_{i_Z}&M
}
\]
Let $(E_\bullet, [\phi])$ be a perfect obstruction theory on $Z$. Then an induced obstruction theory $p_Z^![\phi]$ on $\tilde{Z}$ exists, is unique up to canonical isomorphism,  and  admits a reduced normalized representative $\tilde\phi$ for $p_Z^![\phi]$.
\end{lemma}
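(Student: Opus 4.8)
The plan is to deduce the statement from the criterion for the existence of an induced obstruction theory recorded just before the lemma, together with Lemma~\ref{lem:ReducedNorm}; the only extra input is that passing to a Jouanolou cover makes the relevant schemes affine.

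First I would note that, since $B$ is affine and $p_M:\tilde M\to M$ is a Jouanolou cover, the structure morphism $\pi_{\tilde M}:\tilde M\to B$ is affine (Remark~\ref{rem:Jouanolou}), so $\tilde M$ is an affine scheme. The morphism $i_{\tilde Z}:\tilde Z\to\tilde M$ is a base change of the closed immersion $i_Z:Z\to M$, hence is itself a closed immersion; therefore $\tilde Z$ is a closed subscheme of the affine scheme $\tilde M$, and in particular $\tilde Z$ is affine.

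Next, for the existence of an induced obstruction theory: by the discussion preceding the lemma, $p_M^*[\phi]$ exists as soon as the surjection $i_{\tilde Z}^*\Omega_{\tilde M/B}\to i_{\tilde Z}^*\Omega_{\tilde M/M}$ admits a section. Since $p_M$ is smooth, the relative cotangent sequence of $\tilde M\to M\to B$,
\[
0\to p_M^*\Omega_{M/B}\to\Omega_{\tilde M/B}\to\Omega_{\tilde M/M}\to 0,
\]
is exact with locally free terms, hence remains exact after applying $i_{\tilde Z}^*$; in particular the map $i_{\tilde Z}^*\Omega_{\tilde M/B}\to i_{\tilde Z}^*\Omega_{\tilde M/M}$ is a surjection onto a locally free sheaf on $\tilde Z$. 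As $\tilde Z$ is affine, that locally free sheaf is a projective object of $\QCoh^G_{\tilde Z}$ by \cite[Lemma 2.17]{Hoyois6}, so the surjection splits, and an induced obstruction theory $p_M^*[\phi]$ on $\tilde Z$ exists.

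Finally, every induced obstruction theory $p_M^*[\phi]$ is, by its construction, a perfect obstruction theory on $\tilde Z$, and $i_{\tilde Z}:\tilde Z\to\tilde M$ is a closed immersion with $\tilde M\in\Sm^G/B$; since $\tilde Z$ is affine, Lemma~\ref{lem:ReducedNorm} applies and produces a reduced normalized representative for $p_M^*[\phi]$. I do not anticipate a real obstacle: the lemma is essentially a bookkeeping statement, and the only points that merit a moment's care --- the affineness of $\tilde Z$, which uses the defining property of a Jouanolou cover together with the standing hypothesis that $B$ is affine, and the projectivity of $i_{\tilde Z}^*\Omega_{\tilde M/M}$ --- are both immediate from results already recalled in the text.
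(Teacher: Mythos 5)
Your proposal is correct and is essentially the paper's own argument: the paper's proof consists of the single observation that $\tilde{Z}$ is affine (as noted at the start of Section 4, since $B$ is affine and $\tilde{M}\to B$ is affine), so the splitting of $i_{\tilde Z}^*\Omega_{\tilde M/B}\to i_{\tilde Z}^*\Omega_{\tilde M/M}$ gives the induced obstruction theory as in the discussion preceding the lemma, and Lemma~\ref{lem:ReducedNorm} supplies the reduced normalized representative. You have simply spelled out the same steps in more detail.
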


\begin{proof} Since $\tilde{Z}$ is affine, this follows from  Lemma~\ref{lem:ReducedNorm} and Lemma~\ref{lem:InducedObstThy}.
\end{proof}

\section{The virtual fundamental class-the general case}\label{sec:VFCGeneral}
In this section, we assume that $B$ is affine.

We use our construction in \S\ref{sec:RedNormVirClass} of the virtual fundamental class  for a reduced normalized representative together with the results of \S\ref{sec:ObstThy} to give a well-defined virtual fundamental class for  a perfect obstruction theory on a general $Z\in  \Sch^G/B$.
 
Let  $[\phi]:E\to \L_{Z/B}$ be a perfect obstruction theory on some   $Z\in \Sch^G/B$. Choose a closed immersion $i:Z\to M$ with $M\in\Sm^G/B$ and take a Jouanolou cover 
\[
\xymatrix{
\tilde{Z}\ar[r]^{i_{\tilde{Z}}}\ar[d]_{p_Z}&\tilde{M}\ar[d]^{p_M}\\
Z\ar[r]_{i_Z}&M
}
\]
By Lemma~\ref{lem:Discussion}, we have an induced perfect obstruction theory $(\tilde{E}_\bullet, [\tilde\phi]):=(p_Z^!E_\bullet, p_Z^![\phi])$ on $\tilde{Z}$ and  a reduced normalized representative $\tilde\phi:\tilde{F}_\bullet\to \tau_{\le 1}\L_{\tilde{Z}/B}$ for $[\tilde\phi]$. In particular, we have the identity
\[
 \tilde{E}_\bullet= p_Z^*E_\bullet\oplus \Omega_{\tilde{Z}/Z}
\]
inducing an isomorphism of suspension operators 
\[
\Sigma^{p_Z^*\V(E_\bullet)+T_{\tilde{Z}/Z}}\cong \Sigma^{\V(\tilde{E}_\bullet)}. 
\]
 The quasi-isomorphism $\tilde{F}_\bullet\to\tilde{E}_\bullet$, and the fact that 
$\tilde{F}_\bullet$ is reduced and normalized  gives us a canonical isomorphism
\[
\Sigma^{i_{\tilde{Z}}^*T_{\tilde{M}}-\tilde{F}^1}\cong 
\Sigma^{\V(\tilde{E}_\bullet)}.
\]

Since $p_Z:\tilde{Z}\to Z$ is an affine space bundle, homotopy invariance implies that the smooth push-forward map
\[
p_{Z*}:\pi_{\tilde{Z}!}\circ \Sigma^{p_Z^*\V(E_\bullet)+T_{\tilde{Z}/Z}}\circ    p_Z^*\to 
\pi_{Z!}\circ  \Sigma^{\V(E_\bullet)}
\]
is an isomorphism. Applying this to $1_Z$, using the isomorphisms 
\[
 \Sigma^{p_Z^*\V(E_\bullet)+T_{\tilde{Z}/Z}}\cong 
 \Sigma^{\V(\tilde{E}_\bullet)}\cong \Sigma^{i_{\tilde{Z}}^*T_{\tilde{M}}-\tilde{F}^1}
 \]
 described above and taking the inverse gives us the isomorphism
\begin{equation}\label{eqn:JouanolouComp}
\vartheta_{i_Z, p_Z, \tilde{F}_\bullet}: Z(\V(E_\bullet))_\BM\to 
\tilde{Z}(i_{\tilde{Z}}^*T_{\tilde{M}}-\tilde{F}^1)_\BM.
\end{equation}

We have the virtual fundamental class $[\tilde{Z}, \tilde{\phi}, i_{\tilde{Z}}]^\vir\in \mS^\BM_B(\tilde{F}, i_{\tilde{Z}}^*T_{\tilde{M}}-\tilde{F}^1)$ (Definition~\ref{defVFCRN}), giving us the class
\[
\vartheta_{i_Z, p_Z, \tilde{F}_\bullet}^*([\tilde{Z}, \tilde{\phi}, i_{\tilde{Z}}]^\vir)\in \mS^\BM_B(F, \V(E_\bullet)).
\]
The main point is that this class is independent of the various choices made. This is proven with the help of the following two results.

\begin{lemma}\label{lem:BundleInv} Let $p_M:\tilde{M}\to M$ be a Jouanolou cover and let $q_M:\hat{M}\to \tilde{M}$ be a vector bundle. Form the cartesian diagram
\begin{equation}\label{eqn:DiagBundleInv}
\xymatrix{
\hat{Z}\ar@/_1cm/[dd]_{\hat{p}_Z}\ar[r]^{i_{\hat{Z}}}\ar[d]_{q_Z}&\hat{M}\ar[d]^{q_M}\ar@/^1cm/[dd]^{\hat{p}_M}\\
\tilde{Z}\ar[r]^{i_{\tilde{Z}}}\ar[d]_{p_Z}&\tilde{M}\ar[d]^{p_M}\\
Z\ar[r]_{i_Z}&M
}
\end{equation}
Let $[\phi]:E_\bullet\to \L_{Z/B}$ be a perfect obstruction theory, form the induced obstruction theories $(p_Z^!E_\bullet, p_Z^![\phi])$ and $(\hat{p}^!E_\bullet, \hat{p}_Z^![\phi])$, and  let  
\[
\tilde{\phi}:\tilde{F}_\bullet\to (\sI_{\tilde{Z}}/\sI_{\tilde{Z}}^2\xrightarrow{d}i_{\tilde{Z}}^*\Omega_{\tilde{M}/B}),\ \hat{\phi}:\hat{F}_\bullet\to  (\sI_{\hat{Z}}/\sI_{\hat{Z}}^2\xrightarrow{d}i_{\hat{Z}}^*\Omega_{\hat{M}/B})
\]
be reduced normalized representatives of $p_Z^![\phi]$ and $\hat{p}_Z^![\phi]$, respectively. We have the isomorphisms of Lemma~\ref{lem:HomotopyInv},
\[
\Cst(p_M):\Cst_{\tilde{Z}}\to \Cst_Z,\ \Cst(\hat{p}_M):\Cst_{\hat{Z}}\to \Cst_Z. 
\]

The respective 0-sections $0_{\tilde{F}^1}:\tilde{Z}\to \tilde{F}^1$, $0_{\hat{F}^1}:\hat{Z}\to \hat{F}^1$ induce the Gysin push-forward maps  $0_{\tilde{F}^1!}$ and $0_{\hat{F}^1!}$, and the respective closed immersions $i_{\tilde{\phi}}:\mfC_{i_{\tilde{Z}}}\to \tilde{F}^1$ and $i_{\hat{\phi}}:\mfC_{i_{\hat{Z}}}\to \hat{F}^1$ induce the proper pull-back maps
 $i^*_{\tilde{\phi}}$ and $i^*_{\hat{\phi}}$. We have as well the isomorphisms
  $\alpha_{i_{\tilde{Z}}}:\Cst_{\tilde{Z}}\to \mfC_{i_{\tilde{Z}}}(\sigma_{\tilde{i}}^*T_{\tilde{M}})_\BM$ and  $\alpha_{i_{\hat{Z}}}:\Cst_{\hat{Z}}\to\mfC_{i_{\tilde{Z}}}(\sigma_{\hat{i}}^*T_{\hat{M}})_\BM$

Define maps $s_{\tilde\phi}:\tilde{Z}(i_{\tilde{Z}}^*T_{\tilde{M}}-\tilde{F}^1)_\BM\to \Cst_{\tilde{Z}}$, $s_{\hat\phi}:\hat{Z}(i_{\hat{Z}}^*T_{\hat{M}}-\hat{F}^1)_\BM\to \Cst_{\hat{Z}}$ by
\begin{equation}\label{eqn:CompMapDef}
s_{\tilde\phi}:=\alpha_{i_{\tilde{Z}}}^{-1}\circ i^*_{\tilde{\phi}}\circ 0_{\tilde{F}^1!}\text{ and }
s_{\hat\phi}:=\alpha_{i_{\hat{Z}}}^{-1}\circ i^*_{\hat{\phi}}\circ 0_{\hat{F}^1!}. 
\end{equation}
 Suppose that $\hat{F}_1=q_Z^*\tilde{F}_1$. Then 
\[
\Cst(p_M)\circ s_{i_{\tilde{\phi}}}\circ\vartheta_{i_Z,p_Z, \tilde{F}_\bullet}=\Cst(\hat{p}_M)\circ s_{i_{\hat{\phi}}}\circ \vartheta_{i_Z, \hat{p}_Z, \hat{F}_\bullet}
\]
\end{lemma}

\begin{proof} The identity $\hat{F}_1=q_Z^*\tilde{F}_1$ gives us the map of vector bundles over $q_Z$, $q_F:\hat{F}^1\to \tilde{F}^1$. This identifies $\hat{F}^1$ with the vector bundle $\hat{Z}\times_{\tilde{Z}}\tilde{F}^1\to \tilde{F}^1$, which we denote by $q_F:V\to \tilde{F}^1$.

Saying that $(\tilde{F}_\bullet, \tilde\phi)$ represents $p_Z^![\phi]$ gives us an isomorphism $\tilde{F}_\bullet\cong p_Z^*E_\bullet\oplus \Omega_{\tilde{Z}/Z}$; similar we are given an isomorphism $\hat{F}_\bullet\cong \hat{p}_Z^*E_\bullet \oplus \Omega_{\hat{Z}/Z}$. These give us isomorphisms
\begin{gather*}
\tilde\alpha:\tilde{Z}(i^*_{\tilde{Z}}T_{\tilde{M}}-\tilde{F}^1)_\BM\xrightarrow{\sim}
\tilde{Z}(p_Z^*\V(E_\bullet)+T_{\tilde{Z}/Z})_\BM,\\
\hat\alpha:\hat{Z}(i^*_{\hat{Z}}T_{\hat{M}}-\hat{F}^1)_\BM\xrightarrow{\sim}
\hat{Z}(\hat{p}_Z^*\V(E_\bullet)+T_{\hat{Z}/Z})_\BM
\end{gather*}
The exact sequence 
\[
0\to T_{\hat{Z}/\tilde{Z}}\to T_{\hat{Z}/Z}\to q_Z^*T_{\tilde{Z}/Z}\to0
\]
gives the isomorphism
\[
\alpha:\hat{Z}(\hat{p}_Z^*\V(E_\bullet)+T_{\hat{Z}/Z})_\BM\to
\hat{Z}(q_Z^*(p_Z^*\V(E_\bullet)+ T_{\tilde{Z}/Z}) +T_{\hat{Z}/\tilde{Z}})_\BM
\]
Since $q_Z:\hat{Z}\to \tilde{Z}$ is a vector bundle over $\tilde{Z}$, the smooth push-forward map
\[
q_{Z*}:\hat{Z}(q_Z^*(p_Z^*\V(E_\bullet)+T_{\tilde{Z}/Z})+T_{\hat{Z}/\tilde{Z}})_\BM\to
\tilde{Z}(p_Z^*\V(E_\bullet)+T_{\tilde{Z}/Z})_\BM
\]
is an isomorphism.  Putting these together gives us the isomorphism
\[
\beta:\hat{Z}(i^*_{\hat{Z}}T_{\hat{M}}-\hat{F}^1)_\BM\xrightarrow{\sim}
\tilde{Z}(i^*_{\tilde{Z}}T_{\tilde{M}}-\tilde{F}^1)_\BM
\]
defined as $\beta:=(\tilde\alpha)^{-1}\circ q_{Z*}\circ \alpha\circ \hat{\alpha}$.

The functoriality of smooth push-forward for the composition $\hat{p}_Z=p_Z\circ q_Z$  yields the identity 
\begin{equation}\label{eqn:BundleInv(A)}\tag{A}
\beta\circ\vartheta_{\hat{i}_Z,\hat{p}_Z, \hat{F}_\bullet}=\vartheta_{i_Z, p_Z, \tilde{F}_\bullet}.
\end{equation}

Let  $p_{\hat{F}^1}:\hat{F}^1\to \hat{Z}$, $p_{\tilde{F}^1}:\tilde{F}^1\to \tilde{Z}$ be the projections.  The smooth push-forward map for the smooth map $q_F$ combined with the exact sequence of vector bundles on $\hat{F}^1$
\begin{equation}\label{eqn:ExactSeqBundleInv}
0\to q_F^*V\to p_{\hat{F}^1}^*i_{\hat{Z}}^*T_{\hat{M}}\to q_F^*p_{\tilde{F}^1}^*i_{\tilde{Z}}^*T_{\tilde{M}}\to0
\end{equation}
induces the morphism
\[
q_{F*}: \hat{F}^1(p_{\hat{F}^1}^*i_{\hat{Z}}^*T_{\hat{M}})_\BM\to
\tilde{F}^1(p_{\tilde{F}^1}^*i_{\tilde{Z}}^*T_{\tilde{M}})_\BM, 
\]
giving us the diagram
\begin{equation}\label{eqn:BundleInv(B)}\tag{B}
\xymatrix{
\hat{Z}(i_{\hat{Z}}^*T_{\hat{M}}-\hat{F}^1)_\BM\ar[r]^-\beta\ar[d]_{0_{\hat{F}^1!}}&
 \tilde{Z}(i^*_{\tilde{Z}}T_{\tilde{M}}-\tilde{F}^1)_\BM \ar[d]^{0_{\tilde{F}^1!}}\\
\hat{F}^1(p_{\hat{F}^1}^*i_{\hat{Z}}^*T_{\hat{M}})_\BM
\ar[r]^-{q_{F*}} &
\tilde{F}^1(p_{\tilde{F}^1}^*i_{\tilde{Z}}^*T_{\tilde{M}})_\BM  
}
\end{equation}
This commutes by Remark~\ref{rem:GysinSmoothCartesian}: the Gysin push-forward commutes with smooth push-forward in a cartersian square of vector bundles.

The cartesian diagram \eqref{eqn:DiagBundleInv} gives rise to the cartesian diagram
\[
\xymatrix{
\mfC_{i_{\hat{Z}}}\ar[r]^-{i_{\hat{\phi}}}\ar[d]_{\mfC(q_M)}&\hat{F}^1\ar[d]^{q_F}\\
\mfC_{i_{\tilde{Z}}}\ar[r]_-{i_{\tilde{\phi}}}&\tilde{F}^1
}
\] 
Using the exact sequence \eqref{eqn:ExactSeqBundleInv} again,  smooth push-forward for the smooth morphism $\mfC(q_M)$ gives the map
\[
\mfC(q_M)_*:\mfC_{i_{\hat{Z}}}(\sigma_{i_{\hat{Z}}}^*T_{\hat{M}})_\BM\to
\mfC_{i_{\tilde{Z}}}(\sigma_{i_{\tilde{Z}}}^*T_{\tilde{M}})_\BM
\]

The compatibility of smooth push-forward and proper pull-back in cartesian squares (see Lemma~\ref{lem:BaseChange}) implies that the diagram
\begin{equation}\label{eqn:BundleInv(C)}\tag{C}
\xymatrix{
\hat{F}^1(p_{\hat{F}^1}^*i_{\hat{Z}}^*T_{\hat{M}})_\BM
\ar[r]^-{q_{F*}}\ar[d]_{i_{\hat{\phi}}^*}  &
\tilde{F}^1(p_{\tilde{F}^1}^*i_{\tilde{Z}}^*T_{\tilde{M}})_\BM \ar[d]_{i_{\tilde{\phi}}^*}\\
\mfC_{i_{\hat{Z}}}(\sigma_{i_{\hat{Z}}}^*T_{\hat{M}})_\BM
\ar[r]_-{\mfC(q_M)_*}&
\mfC_{i_{\tilde{Z}}}(\sigma_{i_{\tilde{Z}}}^*T_{\tilde{M}})_\BM
}
\end{equation}
commutes. 

Finally, it follows directly from the definitions of the various morphisms involved that the diagram
\begin{equation}\label{eqn:BundleInv(D)}\tag{D}
\xymatrix{
\Cst_{\hat{Z}}\ar[r]^-{\Cst(q_M)}\ar[d]_{\alpha_{i_{\hat{Z}}}}&\Cst_{\tilde{Z}}\ar[d]^{\alpha_{i_{\tilde{Z}}}}\\
\mfC_{i_{\hat{Z}}}(\sigma_{i_{\hat{Z}}}^*T_{\hat{M}})_\BM
\ar[r]_-{\mfC(q_M)_*} &
\mfC_{i_{\tilde{Z}}}(\sigma_{i_{\tilde{Z}}}^*T_{\tilde{M}})_\BM
}
\end{equation}
commutes. Putting together the identity \eqref{eqn:BundleInv(A)} with the commutativity of the   diagrams \eqref{eqn:BundleInv(B)}, \eqref{eqn:BundleInv(C)} and \eqref{eqn:BundleInv(D)} gives the identity
\[
s_{i_{\tilde{\phi}}}\circ\vartheta_{i_Z,p_Z, \tilde{F}_\bullet}=\Cst(q_M)\circ s_{i_{\hat{\phi}}}\circ \vartheta_{i_Z, \hat{p}_Z, \hat{F}_\bullet};
\]
composing on the left with $\Cst(p_M)$ and using the functoriality
\[
\Cst(p_M)\circ \Cst(q_M)=\Cst(\hat{p}_M)
\]
completes the proof.
\end{proof}

\begin{proposition} \label{prop:VirtFundClass} Suppose $B$ is affine, take $Z\in \Sch^G/B$  and let  $[\phi]:E_\bullet\to\L_{Z/B}$ be a perfect obstruction theory on $Z$. Choose a closed immersion $i_Z:Z\to M$ with $M\in \Sm^G/B$. Choose a Jouanolou cover $p_M:\tilde{M}\to M$ and let $p_Z:\tilde{Z}\to Z$ be the pull-back $\tilde{M}\times_MZ$. Choose 
 a reduced normalized obstruction theory $\tilde{\phi}:\tilde{F}_\bullet\to (\sI_{\tilde{Z}}/\sI_{\tilde{Z}}^2\to i^*\Omega_{\tilde{M}/B})$ representing $p_Z^![\phi]$.  This gives us the isomorphisms 
$\Cst(p_M)$ \eqref{eqn:StableConeIso} and  $\vartheta_{i_Z, p_Z, \tilde{F}_\bullet}$ \eqref{eqn:JouanolouComp}, and the map $s_{\tilde\phi}$ \eqref{eqn:CompMapDef}.  Composing these morphisms gives
\[
\Phi_{E_\bullet,[\phi]}:=\Cst(p_M)\circ s_{\tilde\phi}\circ \vartheta_{i_Z, p_Z, \tilde{F}_\bullet}:
Z(\V(E_\bullet))_\BM\to \Cst_Z.
\]
Then the  morphism $\Phi_{E_\bullet,[\phi]}$ depends only on the perfect obstruction theory $(E_\bullet,[\phi])$, that is, it is independent of the choice of closed immersion $i_Z$, the choice of Jouanolou cover $p_M$,  the choice of induced obstruction theory $p_Z^![\phi]$ and the choice of reduced normalized obstruction theory $\tilde{\phi}$ representing 
 $p_Z^![\phi]$.
 
Moreover, we have the identity
\[
\vartheta_{i_Z, p_Z, \tilde{F}_\bullet}^*([\tilde{Z}, \tilde{\phi}, i_{\tilde{Z}}]^\vir)=
[\Cst]\circ \Phi_{E_\bullet,[\phi]}.
\]
\end{proposition}

Relying on Proposition~\ref{prop:VirtFundClass}, we may make the following definition.
\begin{definition}\label{def:VirtFundClass} Suppose $B$ is affine, take $Z\in \Sch^G/B$  and let  $[\phi]:E_\bullet\to\L_{Z/B}$ be a perfect obstruction theory on $Z$. We have the fundamental class $[\Cst]\in \mS_B^{0,0}(\Cst)$. Define the {\em virtual fundamental class}
\[
[Z,[\phi]]^\vir\in \mS^\BM_B(Z,\V(E_\bullet))
\]
by
\[
[Z,[\phi]]^\vir=[\Cst]\circ \Phi_{E_\bullet,[\phi]}
\]

If we have a motivic ring spectrum  $\sE$ in $\SH^G(B)$ with unit $\epsilon_{\sE}:\mS_B\to \sE$, we define 
\[
[Z,[\phi]]^\vir_\sE:=\epsilon_{\sE}([Z,[\phi]]^\vir)\in \sE^\BM(Z,\V(E_\bullet)).
\]
\end{definition}

\begin{remark}
It follows from the last assertion in Proposition~\ref{prop:VirtFundClass}, that given a a closed immersion $i_Z:Z\to M$ with $M\in\Sm^G/B$,  a Jouanolou cover  $\tilde{M}\to M$, inducing the Jouanolou cover $p_Z:\tilde{Z}\to Z$, and   a  reduced normalized representative $\tilde\phi:\tilde{F}_\bullet\to L_{\tilde{Z}/B}$ for $p_Z^![\phi]$, we have
\[
[Z,[\phi]]^\vir=\vartheta_{i_Z, p_Z, \tilde{F}_\bullet}^*([\tilde{Z}, \tilde{\phi}, i_{\tilde{Z}}]^\vir).
\]
\end{remark}

\begin{proof}[Proof of Proposition~\ref{prop:VirtFundClass}] 
  Suppose we have fixed a closed immersion $i_Z:Z\to M$ and a Jouanolou cover $\tilde{M}\to M$. By Lemma~\ref{lem:Discussion}, there exists    an induced obstruction theory $p_Z^![\phi]$ on $\tilde{Z}$, which is unique up to canonical isomorphism.

Suppose we have two  reduced normalized obstruction theories $(\tilde{F}_\bullet,\phi)$, $(\tilde{F}'_\bullet,\phi')$ representing $p_Z^![\phi]$. By lemma~\ref{lem:RedNormIso}, there is an isomorphism $\rho_\bullet:\tilde{F}_\bullet\to \tilde{F}'_\bullet$ of perfect obstruction theories. In particular, the map $\rho^1:F^{\prime 1}\to F^1$ satisfies $\rho^1\circ i_{\phi'}=i_\phi$. This gives us the commutative diagram
\[
\xymatrix{
Z(\V(E_\bullet)_\BM\ar[r]^-{\vartheta_{i_Z, p_Z, \tilde{F}_\bullet}}
\ar[d]_{\vartheta_{i_Z, p_Z, \tilde{F}'_\bullet}}&\tilde{Z}(i_{\tilde{Z}}^*T_{\tilde{M}}-\tilde{F}^1)_\BM\ar[d]^{s_{i_\phi}}\\
\tilde{Z}(i_{\tilde{Z}}^*T_{\tilde{M}}-\tilde{F}^{\prime1})_\BM\ar[r]_-{s_{i_{\phi'}}}\ar[ur]_{\gamma(\rho^1)}&\Cst_Z
}
\]
where $\gamma(\rho^1)$ is the isomorphism induced by $\rho^1$.
This yields the independence of $\Phi_{E_\bullet,[\phi]}$ on the choice of reduced normalized obstruction theory representing $p_Z^![\phi]$.  

To show the independence on the choice of Jouanolou cover $p_M:\tilde{M}\to M$ over a fixed closed immersion $i:Z\to M$ we may assume that we are comparing one cover  $p_M:\tilde{M}\to M$ with a second cover $\hat{p}_M:\hat{M}\to M$ which factors as 
\[
\hat{M}\xrightarrow{q_M}\tilde{M}\xrightarrow{p_M}M
\]
with $q_M:\hat{M}\to \tilde{M}$ a vector bundle over $\tilde{M}$. The independence here follows from Lemma~\ref{lem:BundleInv} and what we have already shown.

Finally, suppose we have a smooth morphism $q:N\to M$ and a closed immersion $i'_Z:Z\to N$ with $q\circ i'_Z=i_Z$. By what we have already shown, we may suppose that  $M$ is affine; if we take a Jouanolou cover $\tilde{N}\to N$, then as $Z$ is affine, the cover admits a section over $Z$, so we may replace $N$ with $\tilde{N}$, change notation and assume that $N$ is also affine. With what we have already proven, we may chose a reduced normalized representative $\phi:F_\bullet\to (\sI_{i(Z)}/\sI_{i(Z)}^2\to i_Z^*\Omega_{M/B})$ for $[\phi]$.  

We have the commutative diagram with exact rows
\[
\xymatrix{
0\ar[r]&q_Z^*(\sI_{i(Z)}/\sI_{i(Z)}^2)\ar[r]\ar[d]^d&\sI_{i'(Z)}/\sI_{i'(Z)}^2\ar[r]\ar[d]^d&i^{\prime*}\Omega_{N/M}\ar[r]\ar@{=}[d]&0\\
0\ar[r]&i^{\prime*}q^*\Omega_{M/B}\ar[r]&i^{\prime*}\Omega_{N/B}\ar[r]&i^{\prime*}\Omega_{N/M}\ar[r]&0
}
\]
Choosing a splitting $\rho:i^{\prime*}\Omega_{N/M}\to \sI_{i'(Z)}/\sI_{i'(Z)}^2$ gives us the splitting $\sigma:=d\circ\rho:i^{\prime*}\Omega_{N/M}\to i^{\prime*}\Omega_{N/B}$. This gives the reduced normalized representative for $[\phi]$ (with respect to $i'_Z$)
\[
F'_\bullet:=(F_1\oplus i^{\prime*}\Omega_{N/M}\xrightarrow{q^*d_F\oplus \sigma}
 i^{\prime*}\Omega_{N/B})\xrightarrow{(q^*\phi_1+\rho, \id)}
( \sI_{i'(Z)}/\sI^2_{i'(Z)}\xrightarrow{d}i^{\prime*}\Omega_{N/B}).
 \]

We have the cartesian diagram
\[
\xymatrix{
\mfC_{i'_Z}\ar[d]_{\mfC(q)}\ar[r]^-{i_{\phi'}}&F^1\oplus i^{\prime*}T_{N/M}\ar[d]^{p_1}\\
\mfC_{i_Z}\ar[r]_-{i_\phi}&F^1
}
\]
identifying $\mfC_{i'_Z}$ with the bundle $\sigma_{i'}^*T_{N/M}=\mfC_{i_Z}\times_Z i^{\prime*}T_{N/M}$
over $\mfC_{i_Z}$ and $i_{\phi'}$ with $i_\phi\times \id$. 

We have the isomorphism (Lemma~\ref{lem:CanonIso})
\[
\psi_q:=\psi_{i_Z',i_Z}: \mfC_{i_Z'}(\sigma_{i_Z'}^*T_{N})_\BM\to \mfC_{i_Z}(\sigma_{i_Z}^*T_{M})_\BM.
\]
By Theorem~\ref{thm:IntStableNormalCone}, the diagram
\begin{equation}\label{eqn:VFCDiag1}
\xymatrix{
\Cst_Z\ar[r]^-{\alpha_i}_-\sim\ar[d]_{\alpha_{i'}}^\wr& \mfC_{i_Z}(\sigma_{i_Z}^*T_{M})_\BM \\
\mfC_{i_Z'}(\sigma_{i_Z'}^*T_{N})_\BM \ar[ur]_{\psi_q}
 }
\end{equation}
commutes. 

We have the isomorphism  $\theta_q$  
\[
\Sigma^{\sigma_{i'_Z}^*T_{N}}\cong
\Sigma^{\sigma_{i'_Z}^*T_{N/M}}\circ\Sigma^{\sigma_{i'_Z}^*q^*T_{M}}
\]
induced by the exact sequence
\begin{equation}\label{eqn:ExactSeqFundClassInv}
0\to T_{N/M}\to T_{N}\to q^*T_{M}\to0
\end{equation}
and the isomorphism $nat$  
\[
\Sigma^{\sigma_{i'_Z}^*q^*T_{M}}\circ \mfC(q)^*=\Sigma^{ \mfC(q)^*\sigma_{i_Z}^*T_{M}}\circ \mfC(q)^*\cong
\mfC(q)^*\circ\Sigma^{\sigma_{i_Z}^*T_{M}},
\]
giving the composition
\begin{multline*}
\pi_{\mfC_{i'_Z}!}\circ\Sigma^{\sigma_{i'_Z}^*T_{N}}\circ \mfC(q)^*
 \xrightarrow{\theta_q}
 \pi_{\mfC_{i'_Z}!}\circ\Sigma^{\sigma_{i'_Z}^*T_{N/M}}\circ\Sigma^{\sigma_{i'_Z}^*q^*T_{M}}\circ \mfC(q)^*\\
 \xrightarrow{nat}
  \pi_{\mfC_{i'_Z}!}\circ\Sigma^{\sigma_{i'_Z}^*T_{N/M}}\circ\mfC(q)^*\circ\Sigma^{\sigma_{i_Z}^*T_{M}}
  \xrightarrow{\mfC(q)_*}
  \pi_{\mfC_{i_Z}!}\Sigma^{\sigma_{i_Z}^*T_{M}} .
 \end{multline*}
Let $\Theta_q:=nat\circ \theta_q$. By the definition of  $\psi_q$  we have
\[
\psi_q=(\mfC(q)_*\circ \Theta_q)(1_{\mfC_{i_Z}})
\]

We have  the isomorphism
\[
\theta'_q:   \Sigma^{-F^{\prime1}}\circ\Sigma^{i^{\prime*}T_{N}} \xrightarrow{\sim}
 \Sigma^{-F^1}\circ\Sigma^{i^*T_{M}} 
\]
induced by the exact sequence \eqref{eqn:ExactSeqFundClassInv} and the identity $F^{\prime1}\cong F^1\oplus i^{\prime*}T_{N/M}$. Let $p_1:F^{\prime1}\to F^1$ be the projection.

Consider the diagram
\begin{equation}\label{eqn:VFCDiag22}
\xymatrixcolsep{2pt}
\xymatrix{
\pi_{Z!}\circ \Sigma^{-F^{\prime1}}\circ\Sigma^{i^{\prime*}T_{N}}\ar[rr]^-{\theta'_q}_-\sim\ar[d]_{0_{F^{\prime1}!}}&&
\pi_{Z!}\circ \Sigma^{-F^1}\circ\Sigma^{i^*T_{M}} \ar[d]^{0_{F^1!}}\\
 \pi_{F^{\prime1}!}\circ\Sigma^{p_{F^{\prime1}}^*i^{\prime*}_ZT_{N}}\circ
p_{F^{\prime1}}^*\ar[dr]_\sim^-{\Theta_q}&&  \pi_{F^{1}!}\circ\Sigma^{p_{F^{1}}^*i^{*}_ZT_{M}}\circ p_{F^{1}}^*\\
&\kern-80pt{\pi_{F^{\prime1}!}\circ\Sigma^{p_{F^{\prime1}}^*i^{\prime*}_ZT_{N/M}}\circ
p_1^*\circ \Sigma^{p_{F^{1}}^*i^{*}_ZT_{M}}\circ  p_{F^{1}}^*}\kern-50pt\ar[ru]^{p_{1*}}_\sim
}
\end{equation}

As in the proof of Remark~\ref{rem:GysinSmoothCartesian},  one shows that this diagram commutes by using the functoriality of smooth push-forward,  replacing $0_{F^1!}$ and $0_{F^{\prime1}!}$ with their respective inverses $p_{F^1*}$ and $p_{F^{\prime1}*}$.

The commutativity of the left-hand side of the diagram 
\begin{equation}\label{eqn:VFCDiag23}
\xymatrixcolsep{2pt}
\xymatrix{
\pi_{F^{\prime1}!}\circ\Sigma^{p_{F^{\prime1}}^*i^{\prime*}_ZT_{N}}\circ
p_{F^{\prime1}}^*\ar[dr]_\sim^-{\Theta_q}\ar[dd]_{i^*_{\phi'}}&&  \pi_{F^{1}!}\circ\Sigma^{p_{F^{1}}^*i^{*}_ZT_{M}}\circ p_{F^{1}}^*\ar[dd]^{i^*_{\phi}}\\
&\kern-60pt{\pi_{F^{\prime1}!}\circ\Sigma^{p_{F^{\prime1}}^*i^{\prime*}_ZT_{N/M}}\circ
p_1^*\circ \Sigma^{p_{F^{1}}^*i^{*}_ZT_{M}}\circ  p_{F^{1}}^*}\kern-50pt\ar[ru]_\sim^{p_{1*}}
\ar[dd]_{i^*_{\phi'}}\\
\pi_{\mfC_{i'_Z}!}\circ\Sigma^{\sigma_{i^{\prime}_Z}^*T_{N}}\circ
p_{i'_Z}^*\ar[dr]^-{\Theta_q}_-\sim&&
\pi_{\mfC_{i_Z}!}\circ\Sigma^{\sigma_{i_Z}^*T_{M}}\circ p_{i_Z}^*\\
&\kern-50pt\pi_{\mfC_{i_Z}!}\circ\Sigma^{\sigma_{i^{\prime}_Z}^*T_{N/M}}\circ\mfC(q)^*\circ\Sigma^{\sigma_{i^{\prime}_Z}^*T_{M}}\circ
p_{i'_Z}^*\ar[ru]_\sim^{\mfC(q)_*}\kern-50pt
}
\end{equation}
follows from the naturality $\Sigma^{(-)}:D^\perf_{G, iso}(?)\to \Aut(\SH^G(?))$ and that of the right-hand side  by the commutativity of smooth push-forward with proper pull-back,  Lemma~\ref{lem:BaseChange}.

Putting these two diagrams together and evaluating at $1_Z$ gives us the commutative diagram

\begin{equation}\label{eqn:VFCDiag2}
\xymatrixcolsep{40pt}
\xymatrix{
Z(i^{\prime*}T_{N}-F^{\prime1})_\BM\ar[r]^-{\theta'_q(1_Z)}_-\sim\ar[d]_{i_{\phi'}^*\circ 0_{F^{\prime1}!}}& 
Z(i^*T_{M}-F^1)_\BM \ar[d]^{i_{\phi}^*\circ 0_{F^1!}}\\
\mfC_{i'_Z}(\sigma_{i^{\prime}_Z}^*T_{N})_\BM\ar[r]^-\sim_-{\psi_q}&
\mfC_{i_Z}(\sigma_{i_Z}^*T_{M})_\BM.
}
\end{equation}

The functoriality of $\Sigma^{(-)}:D^\perf_{G, iso}(Z)\to \Aut(\SH^G(Z))$
gives us the commutative diagram of isomorphisms
\begin{equation}\label{eqn:VFCDiag3}
\xymatrixcolsep{0pt}
\xymatrix{
&Z(\V(E_\bullet)_\BM\ar[dl]_{\vartheta_{i_Z, \id_M, F_\bullet}\ }\ar[dr]^{\vartheta_{i'_Z, \id_N, F'_\bullet}}\\
Z(i^{\prime*}T_{N}-F^{\prime1})_\BM\ar[rr]^-\sim_-{\theta'_q(1_Z)} && 
Z(i^*T_{M}-F^1)_\BM
}
\end{equation}
Putting  the diagrams \eqref{eqn:VFCDiag1}, \eqref{eqn:VFCDiag2} and \eqref{eqn:VFCDiag3} together, the definition of $s_\phi$ and $s_{\phi'}$  gives the identity
\[
\alpha_{i_Z}\circ s_{\phi'}\circ\vartheta_{i'_Z, \id_N, F'_\bullet}=
\psi_q\circ \alpha_{i'_Z}\circ s_{\phi'}\circ\vartheta_{i'_Z, \id_N, F'_\bullet}=
\alpha_{i_Z}\circ s_{\phi}\circ\vartheta_{i_Z, \id_M, F_\bullet}
\]
or
\[
s_{\phi'}\circ\vartheta_{i'_Z, \id_N, F'_\bullet}=s_{\phi}\circ\vartheta_{i_Z, \id_M, F_\bullet}
\]
As we are taking the trivial Jouanoulou covers, we have $p_M=\id_M$, $p_N=\id_N$, completing the proof.
\end{proof}

We conclude with a result on compatibility of the fundamental class and virtual fundamental class with respect to base-change.

\begin{proposition}\label{prop:basechange}
Let $\pi_Z:Z\to B$, be in $\Sch^G/B$, let $f:B'\to B$ be a morphism in $\Sch/B$ and let $\pi_{Z'}:Z'=Z\times_BB'\to B'$ be the pull-back. Suppose that the cartesian square
\[
\xymatrix{
Z'\ar[d]\ar[r]^{p_1}&Z\ar[d]\\
B'\ar[r]&B
}
\]
is Tor-independent: $\Tor^i_{\sO_B}(\sO_{B'},\sO_Z)=0$ for $i>0$. Then\\[5pt]
1. Suppose we have a closed immersion $i:Z\to M$ in  $\Sch^G/B$ with $M$ in $\Sm^G/B$, let $M'=M\times_BB'$, and let $i_{Z'}:Z'\to M'$ be the induced closed immersion. Then we have natural isomorphisms $\mfC_{i_{Z'}}\cong \mfC_{i_Z}\times_BB'$, $Def(i_{Z'})\cong Def(i)\times_BB'$ over $M'\times\A^1$.\\
2. The isomorphism  $\mfC_{i_{Z'}}\cong \mfC_{i_Z}\times_BB'$ of (1) induces an isomorphism $\Cst_{Z'}\cong f^*\Cst_Z$ sending the fundamental class $[\Cst_{Z'}]\in \mS_{B'}^{0,0}(\Cst_{Z'})$ to 
$f^*[\Cst_Z]$.\\
3. Let $[\phi]:E_\bullet\to \L_{Z/B}$ be a perfect obstruction theory on $Z$. Then the map $[\phi']:p_1^*E_\bullet\to \L_{Z'/B'}$ defined as the composition
\[
p_1^*E_\bullet\xrightarrow{p_1^*[\phi]} p_1^*\L_{Z/B}\xrightarrow{can} \L_{Z'/B'}
\]
is a perfect obstruction theory on $Z'$, we have a canonical isomorphism $f^*(Z(\V(E_\bullet))_\BM)\cong  Z'(\V(p_1^*E_\bullet))_\BM$ and via this isomorphism we have $p_1^*([Z, [\phi]]^\vir)=[Z'[\phi']]^\vir$.
\end{proposition}

\begin{proof} The Tor-independence implies that $p_1^*\L_{Z/B}\cong \L_{Z'/B'}$ and that $p_1^*[\phi]:p_1^*E_\bullet\to  \L_{Z'/B'}$ is a perfect obstruction theory on $Z'$.

 Letting $p_M:M':=M\times_BB'\to M$ be the projection, the Tor-independence implies that the canonical map $p_M^*\sI_Z\to \sI_{Z'}$ is an isomorphism. This readily implies (1) and gives us the isomorphism of the diagram
\[
\xymatrix{
\mfC_{i_{Z'}}\ar[r]\ar[d]&Def(i_{Z'})\ar[d]&\ar[l]M'\times(\A^1\setminus\{0\})\ar@{=}[d]\\
Z'\ar[r]&M'\times\A^1&\ar[l] M'\times (\A^1\setminus\{0\})
}
\]
with 
\[
\left[
\raise15pt\vbox to 20pt{\hbox{$
\xymatrix{
\mfC_{i_{Z}}\ar[r]\ar[d]&Def(i_{Z})\ar[d]&\ar[l]M\times(\A^1\setminus\{0\})\ar@{=}[d]\\
Z\ar[r]&M\times\A^1&\ar[l] M\times (\A^1\setminus\{0\})
}$}}\right]\times_BB'
\]
(2) follows from this, the naturality of $\Sigma^{-}:\sV(?)\to \SH^G(?)$ and  the localization distinguished triangle with respect to $f^*$, and the base-change isomorphism $Ex(\Delta^*_!)$. This also gives us the isomorphism $f^*(Z(\V(E_\bullet))_\BM\cong Z'(\V(p_1^*E_\bullet))_\BM$. Using the definition of the virtual fundamental class, (3) follows from (1) and (2) together with the compatibility of proper pull-back, smooth push-forward and Gysin push-forward  with base-change by $f^*$ (Remark~\ref{rem:basechange}).
\end{proof}
\section{Comparisons and examples}\label{sec:Comp}
We relate our constructions to the  constructions of \cite{BF} in case of motivic cohomology/Chow groups, or more generally the case of an oriented theory, for example, $K$-theory or algebraic cobordism. For simplicity, we take $G=\{\id\}$ and we remind the reader that we are assuming that $B$ is affine.

\subsection{Oriented theories} Let $\sE$ be an oriented commutative ring spectrum in $\SH(B)$. For $Z\in \Sch^G/B$ and $v\in D^\perf_G(Z)$ of virtual rank $r$, we have the {\em Thom isomorphism}
\[
\vartheta_v:  \Sigma^{v}\pi_Z^!\sE\cong \Sigma^{2r,r}\pi_Z^!\sE
\]
giving the isomorphism on $\sE$-Borel-Moore homology
\[
\vartheta_v^*: \sE^\BM_{a,b}(Z, v)\xrightarrow{\sim}\sE^\BM_{2r+a, r+b}(Z)
\]
If $p_Z:Z\to B$ is smooth of relative dimension $d_Z$, the purity isomorphism $p_{Z\#}\circ\Sigma^{-T_Z}\cong p_{Z!}$ gives the purity isomorphism 
\[
 \sE^\BM_{a,b}(Z, v)\cong 
 (\Sigma^{T_X-v}\sE)^{-a, -b}(Z)\cong \sE^{2d_Z-2r-a, d_Z-r-b}(Z).
 \]

For $i:Z\to M$ a closed immersion in $\Sch^G/B$,  in a smooth dimension $d_M$ $B$-scheme $M$, we thus have
\[
\sE^{a,b}(\Cst_Z)\xymatrix{\ar[r]^{(\alpha_i^{-1})^*}_\sim&}
\sE^{a,b}(\mfC_i(\sigma_i^*T_M)_\BM)\cong
\sE_{2d_M-a, d_M-b}^\BM(\mfC_{i}),
\]

Noting that $\mfC_{i}$  has pure dimension $d_\mfC=d_M$ over $B$, the fundamental class $[\Cst_Z]_\sE$ is thus an element of $\sE^{0,0}(\Cst_Z)\cong\sE^\BM_{2d_\mfC,d_\mfC}(\mfC_{i})$ and the virtual fundamental class $[Z,[\phi]]^\vir_\sE$ associated to a perfect obstruction theory $(E_\bullet, [\phi])$ of virtual rank $r$ lives in $\sE^\BM(Z,\V(E_\bullet))\cong \sE^\BM_{2r,r}(Z)$. 

\subsection{Fundamental classes} Let $\sE$ be an oriented theory. Suppose we have an integral $B$-scheme $D$ and a principal effective Cartier divisor $\mfC$ on $D$ such that $D\setminus \mfC$ is smooth over $B$. Suppose  $D$ has pure dimension $d_\mfC+1$ over $B$, so $\mfC$ has pure dimension $d_\mfC$ over $B$. Let $t\in \Gamma(D, \sO_D)$ be a generator for $\sI_\mfC$. The map $t:D\setminus \mfC\to \G_m$ determines an element 
\[
[t]\in \sE^{1,1}(D\setminus \mfC)\cong 
\sE^\BM_{2d_\mfC+1, d_\mfC}(D\setminus \mfC).
\]
We have the localization sequence
\[
\ldots\to\sE^\BM_{2d_\mfC+1, d_\mfC}(D\setminus \mfC)\xrightarrow{\del}
\sE^\BM_{2d_\mfC, d_\mfC}(\mfC)\xrightarrow{i_{\mfC*}} \sE^\BM_{2d_\mfC, d_\mfC}(D)\to
\ldots
\]
and we have  the fundamental class $[\mfC]_\sE\in \sE^\BM_{2d_\mfC, d_\mfC}(\mfC)$ defined by $[\mfC]_\sE:=\del[t]$.  This class is independent of the choice of defining equation $t$. In case $D=Def(i)$ for our closed immersion $i:Z\to M$, and $t:Def(i)\to \A^1$ is the structure morphism 
$Def(i)\xrightarrow{p}M\times \A^1\xrightarrow{p_2}\A^1$, we have $\mfC=\mfC_i$ and $[\mfC]$ agrees with the fundamental class  $[\Cst_Z]_\sE\in \sE^{0,0}(\Cst_Z)$ (see \S\ref{sec:FundClass}) after identifying $\sE^\BM_{2d_\mfC, d_\mfC}(\mfC)$ with $\sE^{0,0}(\Cst_Z)$ as above. The construction of the fundamental class of $\mfC_{i}$  by this method in the case of algebraic cobordism was described to us by Parker Lowrey and appears in his paper with Timo Sch\"urg \cite{LowreySchuerg}.

\begin{ex} \label{ex:ClassicalExamples} Take $B=\Spec k$,  and  $Z\in\Sch/B$. For  $\sE=H\Z$,  the ring spectrum representing motivic cohomology, $H\Z^\BM_{2d,d}(Z)$ is the classical Chow group $\CH_d(Z)$. For $\sE=\KGL$, the ring spectrum representing Quillen $K$-theory, $\KGL^\BM_{2d,d}(Z)=G_0(Z)$, the Grothendieck group of coherent sheaves on $Z$. For $\sE=\MGL$,  the ring spectrum representing Voevodsky's algebraic cobordism \cite{VoevICM}, and $k$ a field of characteristic zero, $\MGL^\BM_{2d,d}=\Omega_d(Z)$, the algebraic cobordism of \cite{LevineMorel}. 

For $\sE=H\Z$, the fundamental class $[\mfC]_{H\Z}\in \sE^\BM_{2d_\mfC, d_\mfC}(\mfC)$ is the cycle class associated to the scheme $\mfC$. For $\sE=\KGL$, $[\mfC]_{\KGL}$ is the class in $G_0(\mfC)$ of the structure sheaf $\sO_\mfC$. For $\sE=\MGL$, $[\mfC]_\MGL$ is the class associated to the pseudo-divisor $\Div[t]$ on $D$, applied to any resolution of singularities $f:\tilde{D}\to D$ 
\[
[\mfC]_\MGL=f_*(\Div_{\tilde{D}}(f^*t)).
\]
\end{ex}

\subsection{Push-forward and intersection with the 0-section} Let $p:Y\to X$ be a projective map in $\Sch/B$ and let $\sE$ be an oriented theory.  We have the push-forward map
\[
p_*:\sE^\BM_{a,b}(Y)\to \sE^\BM_{a,b}(X)
\]
Given a rank $r$ vector bundle $f:V\to Z$ with a section $s:Z\to V$, we have the maps
\[
f^*:\sE^\BM_{a,b}(Z)\to  \sE^\BM_{a,b}(V, T_{V/Z})\cong \sE^\BM_{a+2r,b+r}(V)
\]
\[
s^!: \sE^\BM_{a+2r,b+r}(V)\cong \sE^\BM_{a,b}(V, T_{V/Z})\to \sE^\BM_{a,b}(Z)
\]
with $s^!=(f^*)^{-1}$, by Lemma~\ref{lem:SectionIsos}. $f^*$ is the usual pull-back for the smooth morphism $f$ and $s^!$ is the classical ``intersection with the 0-section'' defined as the inverse of $f^*$.

\begin{remark} Suppose we have a closed immersion $i:Z\hookrightarrow M$ with $M$ smooth of dimension $d_M$ over $B$. The intrinsic normal cone $\mfC_Z$ as defined by Behrend-Fantechi \cite{BF} is the quotient stack $[\mfC_{i}/i^*T_{M/B}]$. They also define the normal sheaf $\sN_{i}:=\Spec_{\sO_Z}\Sym^*\sI_Z/\sI_Z^2$; the surjection
$\Sym^*\sI_Z/\sI_Z^2\to  \oplus_n\sI^n_Z/\sI_Z^{n+1}$ defines the closed immersion $\mfC_{i}\hookrightarrow \sN_{i}$. This induces the closed immersion of quotient stacks $\mfC_Z\hookrightarrow \sN_Z:=[\sN_{i}/i^*T_{M/B}]$.

Suppose we have a perfect obstruction theory $[\phi]$ of virtual rank $r$ on $i:Z\hookrightarrow M$, with global resolution $(F_1\to F_0)\xrightarrow{\phi_\bullet}(\sI_Z/\sI_Z^2\to i^*\Omega_{M/B})$. We may assume that $(F_\bullet, \phi)$ is normalized. The assumption that $\phi$ is a perfect obstruction theory implies that $\phi$ induces closed immersions
\[
\mfC_Z\hookrightarrow \sN_Z\hookrightarrow [F^1/F^0].
\]
Let $\mfC(F_\bullet)\subset \sN(F_\bullet)\subset F^1$ be the pull-back of this sequence of closed immersions by the quotient map $F^1\to [F^1/F^0]$.

One can describe $\sN(F_\bullet)$ explicitly as follows: We have the commutative diagram
\[
\xymatrix{
F_1\ar[r]^{d_F}\ar[d]_{\phi_1}&F_0\ar[d]^{\phi_0}\\
\sI_Z/\sI_Z^2\ar[r]_d&i^*\Omega_{M/B}
}
\]
Let $F:=\sI_Z/\sI_Z^2\times_{i^*\Omega_{M/B}}F_0$. The map $(\phi_1, d_F)$ gives a surjection $\phi_\sN:F_1\to F$;  $\sN(F_\bullet)$ is the closed subscheme $\Spec_{\sO_Z}\Sym^*F$ of $F^1=\Spec_{\sO_Z}\Sym^*F_1$. 

The virtual fundamental class $[Z,[\phi]]^\vir_{BF}$ as defined by Behrend-Fantechi {\it loc. cit.} is the element of $\CH_r(Z)$ given by 
\[
[Z,[\phi]]^\vir_{BF}:=0_{F^1}^!([\mfC(F_\bullet)]).
\]

Suppose that we have a Jouanolou cover $p_M:\tilde{M}\to M$, with pull-back $p_Z:\tilde{Z}\to Z$. An induced perfect obstruction theory $p_Z^![F_\bullet]\to (\sI_{\tilde{Z}}/\sI_{\tilde{Z}}^2\to i_{\tilde{Z}}^*\Omega_{\tilde{M}/B})$ is defined (see Lemma~\ref{lem:Discussion}). 

Writing 
$(\tilde{F}_1\to \tilde{F}_0):=p_Z^![F_\bullet]$, we have $\tilde{F}_1=p_Z^*F_1$ and $\tilde{F}_0=p_Z^*F_0\oplus \Omega_{\tilde{Z}/Z}$. Thus, we have the isomorphism of quotients of $\tilde{F}_1$
\[
\tilde{F}:=\sI_{\tilde{Z}}/\sI^2_{\tilde{Z}}\times_{i_{\tilde{Z}}^*\Omega_{\tilde{M}/B}}\tilde{F}_0\cong p_Z^*F
\]
which shows that $\sN(\tilde{F}_\bullet)\subset \tilde{F}^1$ is equal to $p_Z^*\sN(F_\bullet)$. Thus 
\[
\mfC(\tilde{F}_\bullet)=p_Z^*\mfC(F_\bullet)\subset p_Z^*F^1=\tilde{F}^1,
\]
which implies that
\[
p_Z^*([Z,[\phi]]^\vir_{BF})=[\tilde{Z},p_Z^![\phi]]^\vir_{BF}
\]
in $\CH_{r+d}(\tilde{Z})$, where $d$ is the rank of $\Omega_{\tilde{Z}/Z}$.

Since $p_Z:\tilde{Z}\to Z$ induces an isomorphism
\[
p_Z^*:\CH_r(Z)\to \CH_{r+d}(\tilde{Z}),
\]
we may assume that $Z$ is affine for the purpose of comparing our construction of virtual fundamental classes with that of Behrend-Fantechi.

Assuming then that $Z$ is affine, with a closed immersion $i:Z\to M$ into a smooth affine $B$-scheme $M$, we may take a reduced normalized representative $(F_1\to F_0)\to (\sI_Z/\sI_Z^2\to i^*\Omega_{M/B})$ of a given rank $r$ perfect obstruction theory $[\phi]$; let $r_i:\text{rank}(F_i)$. In this case, we have $\mfC(F_\bullet)=\mfC_{i}\subset F^1$, and $d_\mfC=r_0$. We have already identified our construction of the fundamental class $[\mfC_{i}]_{H\Z}$ with the cycle class $[\mfC_{i}]\in \CH_{r_0}(\mfC_{i})$; we have also identified the push-forward map $i_{\mfC*}:\CH_{r_0}(\mfC_{i})\to \CH_{r_0}(F^1)$ and the intersection with the 0-section $0_{F^1}^!:\CH_{r_0}(F^1)\to \CH_{r_0-r_1}(Z)$, as defined here, with the classical ones. This gives the identity of virtual fundamental classes
\[
[Z,[\phi]]^\vir_{BF}=[Z,[\phi]]^\vir
\]
in $\CH_r(Z)$. 

Of course, the Behrend-Fantechi theory is defined for perfect obstruction theories on Deligne-Mumford stacks, whereas the theory presented here is limited to quasi-projective $B$-schemes for affine $B$.   
\end{remark}

\subsection{Gromov-Witten invariants  for oriented theories} If $[\phi]$ is a virtual rank zero perfect obstruction theory on some $Z\in \Sch/B$, and $\sE$ is an oriented ring spectrum in 
$\SH(B)$, the virtual fundamental class lives in $\sE^\BM_{0,0}(Z)=\sE^{0,0}(\pi_{Z!}(1_Z))$. If $\pi_Z:Z\to B$ is proper, we have the push-forward map in $\sE$-cohomology 
\[
\pi_{Z*}:\sE_{a,b}^\BM(Z)\to \sE^\BM_{a,b}(B)=\sE^{-a,-b}(B),
\]
induced by the map $\pi_Z^*:1_B\to \pi_{Z!}(1_Z)$, giving the GW-invariant
\[
\Deg_\sE([Z,[\phi]]^\vir):=\pi_{Z*}([Z,[\phi]]^\vir)\in \sE^{0,0}(B).
\]
This is the classical ``degree of the virtual fundamental class'' in case $\sE=H\Z$. For more general theories, we may have non-zero invariants for perfect obstruction theories of non-zero ranks which give rise to non-zero degrees:  
\[
\Deg_\sE([Z,[\phi]]^\vir):=\pi_{Z*}([Z,[\phi]]^\vir)\in \sE^\BM_{2r,r}(B) =\sE^{-2r,-r}(B)
\]
for $[\phi]$ of virtual rank $r$.

If we have a morphism $f:Z\to W$, one can twist $[Z,[\phi]]^\vir$ by classes coming from $W$; if $Z$ is proper over $B$, pushing forward gives the descendant classes in $\sE^\BM_{*,*}(B)$.

\subsection{Gromov-Witten invariants  for $\SL$-oriented theories} We now consider theories $\sE$ which are not oriented in the sense of the previous section, but are rather $\SL$-oriented. This means that, given a perfect complex $E_\bullet$  on some $Z\in \Sch/B$ of virtual rank $r$ and virtual determinant $\det E_\bullet$,  there is a canonical  isomorphism 
\[
\lambda_{E_\bullet}:\Sigma^{\V(E_\bullet)-\V(\sO^r_Z)}\pi_Z^*\sE\cong \Sigma^{\V(\det E_\bullet)-\V(\sO_Z)}\pi_Z^*\sE
\]
Moreover, for $L$ a line bundle on $Z$, we have a canonical isomorphism 
\[
\Sigma^{\V(L^{\otimes 2})-\V(\sO_Z)}\pi_Z^*\sE\cong \pi_Z^*\sE.
\]
This gives us the isomorphisms
\[
\sE^\BM_{a,b}(Z, \V(E_\bullet))\cong \sE^\BM_{a+2r-2,b+r-1}(Z, \V(\det(E_\bullet)))
\]
and
\[
\sE^\BM_{a,b}(Z)\cong \sE^\BM_{a-2,b-1}(Z, L^{\otimes 2})
\]

This also implies that, if $\beta:V\to V$ is an automorphism of a vector bundle $V\to Z$, the induced map
\[
\Sigma^\beta: \sE^\BM_{a,b}(Z,V)\to \sE^\BM_{a,b}(Z,V)
\]
is given by  multiplication by the automorphism $\<\det\beta\>\in \End_{\SH(Z)}(1_Z)$.

Suppose we have a rank $r$ perfect obstruction theory $([\phi], E_\bullet)$ on some $Z\in \Sch/B$, with $\pi_Z:Z\to B$ projective over $B$. The virtual fundamental class $[Z,[\phi]]^\vir$ lives in $\sE^\BM(Z, \V(E_\bullet))\cong \sE^\BM_{2r,r}(Z, \V(\det E_\bullet)-\V(\sO_Z))$. Given an isomorphism 
\[
\alpha:\det E_\bullet\to L^{\otimes 2}
\]
for some line bundle $L$ on $Z$, we have the isomorphism 
\[
\lambda_\alpha:\sE^\BM_{2r,r}(Z, \V(\det E_\bullet)-\V(\sO_Z))\xrightarrow{\sim}
\sE^\BM_{2r,r}(Z),
\]
so we may push $\lambda_\alpha([Z,[\phi]^\vir)$ forward by $\pi_Z$ to give
\[
\Deg_\sE([Z,[\phi]^\vir, \alpha):=\pi_{Z*}(\lambda_\alpha([Z,[\phi]^\vir))\in  \sE^\BM_{2r,r}(B).
\]

\begin{ex} We take $B=\Spec k$, $k$ a perfect field, $G=\{\id\}$ and $\sE=H_0(\mS_k)$. We have the sheaf of graded rings on $\Sm_k$ given by {\em Milnor-Witt $K$-theory}, $\sK^{MW}_*$ (see 
\cite{MorelA1} for details).  The 0-th homology $H_0(\mS_k)$ with respect  to the homotopy $t$-structure on $\SH(k)$ represents the theory of Milnor-Witt $K$-theory: for $X\in \Sm/k$, there is a canonical isomorphism
\[
H_0(\mS_k)^{a+b,b}(X)\cong H^a_{Nis}(X, \sK^{MW}_b)
\]

$H_0(\mS_k)$ is $\SL$-oriented, giving the canonical isomorphism
\[
H_0(\mS_k)^\BM_{a,b}(Z, v-\V(\sO_Z^r))\cong H_0(\mS_k)_{a, b}^\BM(Z, \det v-\V(\sO_Z))
\]
for $v$ of virtual rank $r$ on $Z\in \Sch/B$. This gives the isomorphism
\[
H_0(\mS_k)^\BM_{2n,n}(Z, \V(E_\bullet))\cong \widetilde{\CH}_n(Z,\det E_\bullet).
\]
for $Z\in \Sch/k$, $E_\bullet\in D^\perf(Z)$, where $\widetilde{\CH}_*$ is the Chow-Witt theory of Barge-Morel \cite{BargeMorel} and Fasel \cite{FaselCW} (also called the oriented Chow groups). For example, we have $\widetilde{\CH}_0(\Spec k)=\GW(k)$, the Grothendieck-Witt ring of non-degenerate symmetric bilinear forms over $k$.

For $i:Z\hookrightarrow M$, $M$ smooth of dimension $d_M$ over $k$, and $(\phi, E_\bullet)$ a rank $r$ perfect obstruction theory,  this gives us the fundamental class and virtual fundamental class 
\begin{align*}
&[\Cst_Z]\in \widetilde{\CH}_{d_M}(\mfC_{i}, \sigma_i^*\omega_{M/k})\\
&[Z, [\phi]]^\vir\in H_0(\mS_k)^\BM_{2r, r}(Z, \V(\det E_\bullet)-\V(\sO_Z)) =  \widetilde{\CH}_r(Z, \det E_\bullet).
\end{align*}

Thus, if we have a rank 0 perfect obstruction theory $(E_\bullet, [\phi])$ on some $Z\in \Sch/k$, with $Z$ projective over $k$, and an isomorphism $\alpha:\det E_\bullet\xrightarrow{\sim} L^{\otimes 2}$ for some line bundle $L$ on $Z$, we have 
\[
\Deg_{H_0(\mS_k)}([Z,[\phi]]^\vir, \alpha):=\pi_{Z*}(\lambda_\alpha([Z,[\phi]]^\vir))\in
 H_0(\mS_k)^\BM(\Spec k)=K^{MW}_0(k)=\GW(k).
\]

More generally, if $E_\bullet$ has virtual rank $r$, and we have a morphism $f:Z\to W$ with $W\in \Sm/k$, a line bundle $L'$ on $W$, a line bundle $L$ on $Z$ and an isomorphism 
\[
\alpha: \det E_\bullet\otimes f^*L'\to L^{\otimes 2}
\]
then a class  
\[
\beta\in  
H^r(W, \sK^{MW}_{r+s}(L'))=H_0(\mS_k)^{2r+s, r+s}(W, \V(\sO_W)-\V(L'))
\]
gives via the cap product
\begin{multline*}
H_0(\mS_k)^\BM_{2r, r}(Z, \V(\det E_\bullet)-\V(\sO_Z))\times H_0(\mS_k)^{2r+s, r+s}(W, \V(\sO_W)-\V(L'))\\
\xrightarrow{-\cap f^*(-)}
H_0(\mS_k)^\BM_{-s, -s}(Z, \V(\det E_\bullet\otimes f^*L')-\V(\sO_Z))
\end{multline*}
the element
\[
\lambda_\alpha([Z,[\phi]^\vir]\cap f^*\beta)\in  H_0(\mS_k)^\BM_{-s,-s}(Z),
\]
and we can define the descendant class
\begin{multline*}
\Deg^s_{H_0(\mS_k)}([Z,[\phi]^\vir]\cap f^*\beta,\alpha):=
\pi_{Z*}(\lambda_\alpha([Z,[\phi]^\vir]\cap f^*\beta))\\\in 
H_0(\mS_k)^\BM_{-s, -s}(\Spec k)=H_0(\mS_k)^{s, s}(\Spec k)=K^{MW}_s(k).
\end{multline*}

There is a universal $\SL$-oriented theory, $\MSL$, with $\MSL_n$ the Thom space of the universal bundle $\tilde{E}_n\to \BSL_n$. Just as for $\MGL_n$, $\MSL_n^{-2r, -r}(k)$ is non-zero for all $r\ge0$, so we have a non-trivial target for the degree map for perfect obstruction theories of all non-negative ranks, but having a trivialized determinant bundle (up to a square).
\end{ex}

\section{Generating ideals, \hbox{$\A^1$}-local degree and critical loci} \label{sec:LocalDeg}
The most elementary type of obstruction theory is one that is already normalized and reduced. Fix a scheme $B$. For $X\to B$ a $B$-scheme we let $\pi_X:X\to B$ denote the structure morphism. Let $M\to B$ be a smooth $B$-scheme and $i:Z\hookrightarrow M$ a closed subscheme. This gives us the cone $\mfC_{i}$ and the fundamental class $[\mfC_i]\in \mS_B^\BM(\mfC_i,\sigma_i^*T_M)$.

We  suppose we have a locally free sheaf $\sV$ on $M$ and an $\sO_Z$-linear surjective map $F:\sV^\vee\otimes\sO_Z\to \sI_Z/\sI_Z^2$; if $M$ is quasi-projective, this always exists. We let $\del_\phi:\sV^\vee\otimes\sO_Z\to \Omega_{M/B}\otimes\sO_Z$ be the map $d\circ \phi_1$, with $d:\sI_Z/\sI_Z^2\to  \Omega_{M/B}\otimes\sO_Z$ induced by  the  canonical derivation $d:\sO_M\to \Omega_{M/B}$. This gives us the perfect obstruction theory
\[
\phi=(F, \id):(\sV^\vee\otimes\sO_Z\xrightarrow{\del_\phi} \Omega_{M/B}\otimes\sO_Z)\to 
(\sI_Z/\sI_Z^2\xrightarrow{d}  \Omega_{M/B}\otimes\sO_Z)
\]
on $Z$, which is already reduced and normalized.

The map $F$ induces the closed immersion of $Z$-schemes $i_\phi:\mfC_i\hookrightarrow i^*V$, where $V\to M$ is the vector bundle $\V(\sV^\vee)$. The associated virtual fundamental class is then
\[
[Z, [\phi]]^\vir:=0_{i^*V}^!i_{\phi*}([\mfC_i])\in \mS_B^\BM(Z, i^*T_M-i^*V).
\]
Since $\phi$ is determined by $F$, we write this as $[Z, F]^\vir$

A choice of an isomorphism $\psi:T_M\xrightarrow{\sim} V$ (if one exists) simplifies this to a  virtual fundamental class $[Z, F]^\vir\in \mS_B^\BM(Z)$. If we pass to an $\SL$-oriented theory $\sE$ and $V$ and $T_M$ have the same rank, then we only need an isomorphism $\rho: \det\sV\otimes  \omega_{M/B}\otimes\sO_Z\to \sL^{\otimes 2}$ for some invertible sheaf $\sL$ on $Z$ to reduce to   $[Z, F]^\vir_\sE\in \sE^\BM(Z)$. For example, if $\sV=\Omega_{M/B}$, we have the identity $\det \sV\otimes\omega_{M/B}=\omega_{M/B}^{\otimes 2}$. If $M$ has even dimension $2m$ and $\sV=\Omega_{M/B}\otimes\sL$ for some invertible sheaf $\sL$, then we have the canonical isomorphism $\det\sV\otimes  \omega_{M/B}\cong(\omega_{M/B}\otimes\sL^{\otimes m})^{\otimes 2}$.

\begin{ex} Let $R$ be a commutative ring, let $B=\Spec R$,   and let $Z\subset \A^n_B$ be a closed subscheme. Let $I\subset R[X_1, \ldots, X_n]$ be the ideal of $Z$ and choose $F_1,\ldots, F_n\in I$ that generate $I/I^2$ as $\sO_Z$-module.  

We have  $\sO_{Z}$-linear surjection $F:\Omega^\vee_{\A^n_B/B}\otimes\sO_{Z}\to I/I^2$ defined by sending the basis element $\del/\del X_i$ to the image of $F_i$ in $I/I^2$. Using the trivialization of $T_{\A^n_B/B}$ and $T^\vee_{\A^n_B/B}$  by  the bases $\{\del/\del X_i\}_i$ and $\{dX_i\}_i$ we have the canonical isomorphism $\Sigma^{i^*T_{A^n_B/B}-i^*T^\vee_{\A^n_B/B}}\cong \id$, giving us  the virtual fundamental class $[Z, F]^\vir\in \mS_B^\BM(Z)= \mS_B^{0,0}(Z_\BM)$. Note that 
$[Z, F]^\vir$ depends only on the image of the $F_i$ modulo $I^2$.
\end{ex}

\begin{definition}[$\A^1$-local degree \hbox{\cite[Definition 10]{KW}}]\label{def:A1LocDeg} Let $B=\Spec A$ be an affine scheme and let   $g=(g_1,\ldots, g_n):\A^n_B\to \A^n_B$ be a polynomial map, $g_i\in A[X_1,\ldots, X_n]$. Suppose that  $g^{-1}(0_B)_\red$ is a disjoint union, $g^{-1}(0_B)=x\amalg x'$, with $g_{|x}:x\to B$ finite.  Choose $U\subset \A^n$ an open neighborhood of $x$ with $U\cap g^{-1}(0_B)=x$. The {\em $\A^1$-local  degree} of $g$ along $x$, $\delta_{\A^1}(g, x)$, is the element of $\mS_B^{0,0}(B)=\End_{\SH(B)}(\mS_B)$ given by stabilizing the composition
\[
S^{2n,n}_B=\P^n_B/\P^{n-1}_B\to \P^n_B/\P^n_B\setminus x \xleftarrow{\sim}U/U\setminus  x\xrightarrow{g_{|(U,U\setminus x)}} \P^n_B/\P^n_B\setminus\{0_B\}\xleftarrow{\sim} \P^n_B/\P^{n-1}_B=S^{2n,n}_B.
\]
The map $U/U\setminus x\to \P^n_B/\P^n_B\setminus x$ is an isomorphism by excision and the map  $\P^n_B/\P^{n-1}_B\to  \P^n_B/\P^n_B\setminus\{0_B\}$ is an isomorphism by homotopy invariance. It is easy to see that this composition is independent of the choice of $U$.
\end{definition}

\begin{remarks}\label{rem:LocalDegAdditivity} Let $g:\A^n_B\to \A^n_B$ and $x\subset g^{-1}(0_B)\subset \A^n_B$ be as in Definition~\ref{def:A1LocDeg}. \\[5pt]
1. Suppose $x$ is a disjoint union $x=x_1\amalg x_2$.   Then 
\[
\delta_{\A^1}(g, x)=\delta_{\A^1}(g,x_1)+\delta_{\A^1}(g, x_2).
\]
This follows by considering the Nisnevich cover 
\[
U\setminus x_2\amalg U\setminus x_1\to U, 
\]
where $U\subset \A^n$ is an open subscheme  with $g^{-1}(0_B)\cap U= x$.
\\[3pt]
2. Letting $f:B'\to B$ be a morphism, we have $f^*(\delta_{\A^1}(g, x))\in \mS_B^{0,0}(B')$, we have the pull-back morphism $g':\A^n_{B'}\to \A^n_{B'}$ and $x':=f^{-1}(x)\subset g^{\prime-1}(0_{B'})$. Then
\[
f^*(\delta_{\A^1}(g, x))=\delta_{\A^1}(g', x').
\]
Indeed, if we take $U\subset \A^n_B$ with $g^{-1}(0_B)_\red\cap U=x$, then letting $U'=U\times_BB'$, we have $g^{\prime-1}(0_{B'})_\red\cap U'=x'$, and the result follows by applying the base-change isomorphism $Ex(\Delta^*_{\#})$ to the sequence of maps defining $\delta_{\A^1}(g, x)$ and $\delta_{\A^1}(g', x')$.\\[2pt]
3. In case $B=\Spec k$, $k$ a perfect field, Morel's theorem identifies $\mS_k^{0,0}(\Spec k)$ with $\GW(k)$, so we have $\delta_{\A^1}(g, x)\in \GW(k)$. 
\end{remarks}

\begin{remark}\label{rem:Automorphism} Let $K$ be a perfect field, $f:\A^n_K\to \A^n_K$ a linear automorphism and $p:\A^n_K\to \Spec K$ the projection. Then the map $f_*:p_!(1_{\A^n})\to p_!(1_{\A^n})$ is multiplication by the rank one quadratic form $\<\det f\>$. Indeed, we may use a matrix representation for $f$. Since $\SL_n(K)$ is generated by elementary matrices, $f$ is $\A^1$-homotopic to the map $(x_1,\ldots, x_n)\mapsto (ux_1,x_2,\ldots, x_n)$ with $u=\det f$.  

Via the canonical isomorphism $p_!=p_\#\circ\Sigma^{-T_{\A^n/K}}$, we have $p_!(1_{\A^n})\cong \Sigma^n_{\P^1}(1_{\Spec K})$, with the action of $f$ going over to $\Sigma^{n-1}_{\P^1}(p_{\A^1!}(\det f))$, with $\det f:\A^1_K\to \A^1_K$ the multiplication map. This reduces us to the case $n=1$. In this case, via the isomorphism $p_!(1_{\A^1})\cong \Sigma_{\P^1}(1_{\Spec K})$, multiplication by $u=\det f$ goes over to the map $\P^1\to \P^1$ sending $[x_0:x_1]$ to  $[x_0:ux_1]$. Morel's isomorphism $\GW(K)\to \End_{\SH(K)}(1_{\Spec K})$ sends the quadratic form $\<u\>$ to the stable version of this latter map.
\end{remark}

\begin{lemma}\label{lem:VIrDegEt} Let $k$ be a perfect field, let $g:\A^n_k\to \A^n_k$ be a polynomial map, $g=(g_1,\ldots, g_n)$ and suppose we have closed points $x_1,\ldots, x_r$ of $\A^n_k$ which are all isolated points of $g^{-1}(0)$.  Let $Z= \{x_1,\ldots, x_r\}$ and suppose that the map $g$ is \'etale in a neighborhood of $Z$. Let $\pi_Z:Z\to \Spec k$ be the projection, giving the push-forward map  $\pi_{Z*}:\mS_k^\BM(Z)\to \mS_k^\BM(\Spec k)=\mS_k^{0,0}(\Spec k)$. Then  
\[
\pi_{Z*}([Z,g]^\vir)=\delta_{\A^1}(g, Z)
\]
\end{lemma}

\begin{proof}  We use the standard basis $dX_1,\ldots, dX_n$ for $\Omega_{\A^n/k}$. This gives us the canonical isomorphisms
\[
\mfC_{Z\subset\A^n}\cong \A^n_Z,\ T_{\A^n/k}^\vee\otimes\sO_Z\cong \A^n_Z
\]
via which the isomorphism $i_g:\mfC_{Z\subset\A^n}\to T_{\A^n/k}^\vee\otimes\sO_Z$ becomes the $\sO_Z$-linear map $\A^n_Z\to \A^n_Z$ with matrix the Jacobian matrix $\sJ(g)=(\del g_i/\del X_j)$ restricted to $Z$. Letting $J(g)=\det\sJ(g)$, it follows from Remark~\ref{rem:Automorphism} and the definition of $[Z,g]^\vir$ that $[Z,g]^\vir$ is the rank one quadratic form $\<J(g)(Z)\>\in \GW(Z)=\mS_k^{0,0}(Z)$, that is, at $x_i\in Z$, 
$[Z,g]^\vir$ takes the values $\<J(g)(x_i)\>\in \GW(k(x_i)\>=\mS_k^{0,0}(x_i)$. 

Since $Z\to \Spec k$ is \'etale, we have $Z_\BM=Z$ and the map $\pi_{Z*}:\mS_k^{0,0}(Z)\to \mS_k^{0,0}(\Spec k)$ is identified with the trace map 
\[
\Tr_{k[Z]/k}=\sum_i\Tr_{k(x_i)/k}:\prod_{i=1}^r\GW(k(x_i))\to \GW(k)
\]
(see \cite[Lemma 5.3]{HoyoisTrace}).
By \cite[Proposition 14]{KW}, $\delta_{\A^1}(g, Z)=\Tr_{k[Z]/k}\<J(g)(Z)\>$, thus $\pi_{Z*}([Z,g]^\vir)=\delta_{\A^1}(g, Z)$, as claimed.
\end{proof}

We can remove the condition that $g$ is \'etale along $Z$ by a deformation argument, taken from \cite{KW}.

\begin{proposition}\label{prop:VirDeg} Let $k$ be  perfect field of characteristic different from 2. Let $g=(g_1,\ldots, g_n):\A^n_k\to\A^n_k$ be a polynomial map. Suppose that $g^{-1}(0)$ is a disjoint union of closed subschemes $Z\amalg Z'$ with $Z$ of pure dimension zero and write $Z_\red=z$.   Then $\pi_{Z*}([Z, g]^\vir)\in \mS_k^{0,0}(k)=\GW(k)$ is the $\A^1$ local degree $\delta_{\A^1}(g, z)$.
\end{proposition}

\begin{proof} Since the map $\GW(k)\to \GW(k')$ is injective for $k\subset k'$ a field extension that is the  union over a tower of finite extensions $k\subset k_\alpha$ of odd degree, we may assume that $k$ is an infinite field.

Let $\mathfrak{m}\subset k[X_1,\ldots, X_r]$ be the ideal of $z$ Then $Z$ is a complete intersection component of the subscheme of $\A^n_k$ defined by $(g_1,\ldots, g_n)$. Moreover, the cone $\mfC_{Z\subset\A^n}$, the fundamental class $[\mfC_{Z\subset\A^n}]$ and the map $\phi_{g}$ are unchanged if we replace the $g_i$ with polynomials $g_i'\in k[X_1,\ldots, X_n]$ such that $g_i'-g_i\in \mathfrak{m}^b$ for $b>>0$. 
By \cite[Lemma 15(3)]{KW}, the same holds for the $\A^1$ local degree $\delta_{\A^1}(g, z)$.

Adding to each $g_i$ a suitably general $h_i\in (X_1,\ldots, X_n)^b$ for sufficiently high $b$, we may assume that each of the $g_i$ have the same degree $d$ and that the map $g$ extends to a  morphism $\bar{g}:\P^n_k\to \P^n_k$ satisfying
\begin{equation}\label{eqn:Assumption18}
\vbox{
\begin{enumerate}
\item[(1)] $\bar{g}$ is finite, flat and of degree prime to the characteristic of $k$.
\item[(2)] $\bar{g}$ is \'etale at each point of $F^{-1}(0)\setminus Z$
\item[(3)] $\bar{g}^{-1}(\A^n)\subset \A^n$
\end{enumerate}
}
\end{equation}
This is proven in \cite[Lemmas 19-21, Proposition 22]{KW} under the assumption that $Z$ is supported at 0, but the same proof works in the more general case.  We construct a morphism over $\A^1$, $\sG:\P^n_{\A^1}\to \P^n_{\A^1}$,
satisfying \eqref{eqn:Assumption18}(1) and in addition
\begin{enumerate}
\item[(2$'$)] There is an open subset $V\subset \A^1$ containing $1$ such that $\sG$ is \'etale over a neighborhood of $\sG^{-1}(0\times V)$.
\item[(3$'$)] $\sG^{-1}(\A^n\times \A^1)\subset \A^n\times\A^1$
\item[(4$'$)]  Letting $\sG_\lambda:\P^n_k\to \P^n_k$ be the pull-back of $\sG$ over $\lambda\in \A^1(k)$, we have $\sG_0=\bar{g}$.
\end{enumerate}
Indeed, since $\bar{g}$ is finite and of degree prime to the characteristic, $\bar{g}$ is \'etale over a dense open subset $U\subset \P^n_k$. Since $k$ is infinite, there is a $k$-point $u\in U$, and thus $\bar{g}$ is \'etale over a neighborhood of   $\bar{g}^{-1}(u)$. Let $\phi:\A^1\to \Aut\P^n_k$ be the morphism sending $t$ to translation by $tu$ (in the Euclidean subgroup of affine linear automorphisms of $\A^n$, embedded as a subgroup of $\Aut(\P^n)$ in the usual way), and define $\sG$ by $\sG(x,t)=\phi(-t)(\bar{g}(x))$.

Let $Z\subset \A^n\times\A^1$ be the closed subscheme $\sG^{-1}(0\times\A^1)$. Then $\pi_Z:Z\to \A^1$ is finite and flat with fiber over 0  a disjoint union of two closed subschemes $Z\amalg Z'$. Moreover, $Z'$ is reduced and the map $g$ is \'etale on a neighborhood of $Z'$. Let $\tilde{\sG}=\sG_{|\A^n\times\A^1}:\A^n\times\A^1\to \A^n\times\A^1$, and $\tilde{\sG}_\lambda$ the restriction of $\tilde{\sG}$ over $\lambda\in \A^1(k)$, so $\tilde{\sG}_0=g$. Similarly, let $Z_\lambda$ be the fiber of $Z$ over $\lambda$.

Replacing $\SH(k)$ with $\SH(\A^1_k)$, we have the virtual fundamental class 
\[
[Z, \tilde{\sG}]^\vir\in \Hom_{\SH(\A^1_k)}(\pi_{Z!}(1_Z), \mS_{\A^1})
\]
its push-forward $\pi_{Z*}([Z, \tilde{\sG}]^\vir)\in \End_{\SH(\A^1_k)}(\mS_{\A^1})$ and the $\A^1$ local degree of $\tilde{\sG}$, $\delta_{\A^1}(\tilde{\sG}, Z_\red)\in \End_{\SH(\A^1_k)}(\mS_{\A^1})$. Since $Z$ is flat over $\A^1$, we have
 for each $\lambda\in \A^1(k)$, with inclusion $i_\lambda:\Spec k\to \A^1$,  the identity
\[
i_\lambda^*(\pi_{Z*}([Z, \tilde{\sG}]^\vir))=
\pi_{Z_\lambda*}([Z, \tilde{\sG}_\lambda^\vir])
\]
(see Proposition~\ref{prop:basechange}). Similarly,
\[ 
i_\lambda^*(\delta_{\A^1}(\tilde{\sG}, Z_\red))=\delta_{\A^1}( \tilde{\sG}_\lambda, Z_{\lambda\red}).
\]
On the other hand, for $p:\A^1_k\to \Spec k$ the projection, we have
\[
\End_{\SH(\A^1_k)}(\mS_{\A^1})=\Hom_{\SH(\A^1)}(1_{\A^1}, p^*(1_k))=
 \Hom_{\SH(k)}(p_\#1_{\A^1}, 1_k)=\mS^{0,0}_k(\A^1_k),
\]
so by homotopy invariance and Remark~\ref{rem:LocalDegAdditivity} we have 
\begin{align*}
&\pi_{Z_1*}([Z, \tilde{\sG}_1]^\vir)=\pi_{Z_0*}([Z, \tilde{\sG}_0]^\vir)=\pi_{Z*}([Z, g]^\vir)+\pi_{Z'*}([Z', g]^\vir)\\
&\delta_{\A^1}(\tilde{\sG}_1, Z_{1\red})=\delta^{\A^1}(\tilde{\sG}_0, Z_{0\red})=
\delta_{\A^1}(g_0, Z_\red)+\delta_{\A^1}(g, Z')
\end{align*}
By Lemma~\ref{lem:VIrDegEt}, we have
\[
\pi_{Z_1*}([Z, \tilde{\sG}_1]^\vir)=\delta_{\A^1}(Z_1, \tilde{\sG}_1)\text{ and }\pi_{Z'*}([Z', g]^\vir)=\delta_{\A^1}(g, Z'),
\]
so 
\[
\pi_{Z*}([Z, g]^\vir)=\delta_{\A^1}(g, Z_\red).
\]
\end{proof} 

\begin{remark} Proposition~\ref{prop:VirDeg} deals with the trivial case of a virtual fundamental class, namely,  the case of a local complete intersection. In the classical case with values in the Chow groups, the virtual fundamental class is just the cycle class associated to the local complete intersection, in other words, the result given by classical intersection theory. The refined version of this trivial case is still interesting, as it points out how the classical intersection multiplicity is replaced by the $\A^1$-local degree.
\end{remark}

As an example of the above construction we have the virtual fundamental class of the critical locus of a function $f:M\to \A^1$, $M$ a smooth $k$-scheme. The critical locus $Z$ of $f$ is simply the 0-subscheme of the section $df$ of $\Omega_{M/k}$.

Taking the Hessian matrix of  $f$ gives the globally defined morphism of sheaves
\[
H:\sO_Z\to \sHom(\Omega^{\vee}_{M/k}\otimes\sO_Z, \Omega_{M/k}\otimes\sO_Z).
\]
In a coordinate neighborhood with coordinates $x_1,\ldots, x_n$, $H(1)$ is the map sending $\del/\del x_i\otimes f$ to $\sum_j dx_j\otimes \del^2f/\del x_i\del x_j$. We have as well the commutative diagram of sheaves on $Z$
\[
\xymatrix{
\Omega^{\vee}_{M/k}\otimes\sO_Z\ar[r]^{H}\ar[d]_{\ev_{df}}&\Omega_{M/k}\otimes\sO_Z\ar@{=}[d]\\
\sI_Z/\sI_Z^2\ar[r]_d&\Omega_{M/k}\otimes\sO_Z
}
\]
where $\ev_{df}$ is the map evaluating a vector field on $df$, giving us a perfect obstruction theory on $Z$.

Clearly  $[\Omega^{\vee}_{M/k}\otimes\sO_Z\xrightarrow{H} \Omega_{M/k}\otimes\sO_Z]$ has virtual rank 0 and virtual determinant $\omega_{M/k}^{\otimes 2}$.   Thus, this perfect obstruction theory   gives us a virtual fundamental class
\[
[Z, \del_f]^\vir\in \sE^\BM_{0,0}(Z)
\]
for any   cohomology theory $\sE\in \SH(k)$. If in addition $Z$ is projective over $k$, and $\sE$ is $\SL$-oriented,  then we have the push-forward map, giving
\[
\Deg_\sE([Z, \del_f]^\vir):=\pi_{Z*}([Z, \del_f]^\vir)\in \sE^{0,0}(\Spec k).
\]
For instance, we may take $\sE$ to be hermitian $K$-theory or Chow-Witt theory, giving 
\[
\Deg_\sE([Z, \del_f]^\vir)\in \GW(k)
\]
with rank the degree of the usual virtual fundamental class.

One can generalize this construction slightly. Assuming that the critical subscheme of $f$ is a disjoint union of components, $Z\amalg Z'$, we can restrict the whole construction to $Z$, giving the virtual fundamental class $[Z, \del_f]^\vir\in \sE^\BM_{0,0}(Z)$.

As a direct consequence of Proposition~\ref{prop:VirDeg} we have the following description of the virtual fundamental class of a zero-dimensional component of the critical locus.

\begin{corollary}\label{cor:VirClassA1Deg} Let $f:\A^n\to \A^1$ be a function and suppose that the critical  subscheme  of $f$ is a disjoint of closed subschemes $Z\amalg Z'$ with $Z$ of pure dimension zero. We have the polynomial map 
\[
\del f=(\del f/\del X_1,\ldots, \del f/\del X_n): \A^n\to \A^n. 
\]
Then
\[
\pi_{Z*}([Z,\del_f]^\vir)=\delta_{\A^1}(\del f, Z_\red)
\]
in $\GW(k)$.
\end{corollary}

 \end{document}